\theoremstyle{plain}
\newtheorem{theorem}{Theorem}
\newtheorem{corollary}[theorem]{Corollary}
\newtheorem{lemma}[theorem]{Lemma}
\newtheorem{proposition}[theorem]{Proposition}
\newtheorem*{theorem*}{Theorem}
\newtheorem*{corollary*}{Corollary}
\newtheorem*{lemma*}{Lemma}
\newtheorem*{proposition*}{Proposition}
\theoremstyle{definition}
\newtheorem{remark}[theorem]{Remark}
\newtheorem*{remark*}{Remark}
\newtheorem{example}[theorem]{Example}
\newtheorem{definition}[theorem]{Definition}
\renewcommand\labelenumi{\textup{\alph{enumi})}}
\renewcommand\theenumi\labelenumi
\renewcommand{\Re}{\ensuremath{\operatorname{Re}}}
\renewcommand{\Im}{\ensuremath{\operatorname{Im}}}
\newcommand{\eup}{\mathrm{e}}
\newcommand{\iup}{\mathrm{i}}
\def\@makefnmark{\hbox{(\@textsuperscript{\normalfont\@thefnmark})}}
\DeclareFontFamily{U}{mathx}{\hyphenchar\font45}
\DeclareFontShape{U}{mathx}{m}{n}{
      <5> <6> <7> <8> <9> <10>
      <10.95> <12> <14.4> <17.28> <20.74> <24.88>
      mathx10
      }{}
\DeclareSymbolFont{mathx}{U}{mathx}{m}{n}
\DeclareMathAccent{\widecheck}{0}{mathx}{"71}
\DeclareMathAccent{\wideparen}{0}{mathx}{"75}
\newcommand\Ee{\mathds{E}}
\newcommand\nat{\mathds{N}}
\newcommand\rat{\mathds{Q}}
\newcommand\integer{\mathds{Z}}
\newcommand\real{{\mathds{R}}}
\newcommand\comp{{\mathds{C}}}
\newcommand\HH{{\mathds{H}}}
\newcommand\I{\mathds{1}}
\newcommand\Pp{\mathds{P}}
\newcommand\id{\mathop{\mathrm{id}}}
\newcommand\Per{\mathop{\mathrm{Per}}}
\newcommand\supp{\mathop{\mathrm{supp}}}
\newcommand\spann{\mathop{\mathrm{span}}}
\newcommand\rn{{\mathds{R}^n}}
\newcommand{\Dcal}{\mathcal{D}}
\newcommand{\Lcal}{\mathcal{L}}
\newcommand{\Rcal}{\mathcal{R}}
\newcommand{\Pcal}{\mathcal{P}}
\newcommand{\Scal}{\mathcal{S}}
\newcommand{\Tcal}{\mathcal{T}}
\newcommand{\boxperp}{[\perp]}
\newcommand{\scalp}[2]{\langle #1,\,#2\rangle}
\begin{document}
\begin{flushright}\small
\underline{Appeared in} \emph{Mathematica Scandinavica}, \textbf{128(2)} (2022) 365--388.\\
\bigskip\bigskip
\end{flushright}

\title[Liouville properties for L\'evy generators]{\bfseries On the Liouville and strong Liouville properties for a class of non-local operators}

\author[D.~Berger]{David Berger}

\author[R.L.~Schilling]{Ren\'e L.\ Schilling}
\address{TU Dresden\\ Fakult\"{a}t Mathematik\\ Institut f\"{u}r Mathematische Stochastik\\ 01062 Dresden, Germany}
\email{david.berger2@tu-dresden.de}
\email{rene.schilling@tu-dresden.de}

\thanks{\emph{Acknowledgement}. We thank Moritz Kassmann for drawing our attention to the paper \cite{ali-et-al} of Alibaud, del Teso, Endal and Jakobsen. We are grateful to Espen Jakobsen who sent us the latest version of \cite{ali-et-al} and whose comments were most helpful. Bj\"{o}rn B\"{o}ttcher, Wojciech Cygan, Franziska K\"{u}hn, Mateusz Kwa\'snicki, Niels Jacob, Victoria Knopova and Zolt\'an Sasv\'ari read various earlier versions, pointed out mistakes and made valuable suggestions---a big thank you, too. The comments of an anonymous referee helped to improve the presentation of this paper. Financial support through the DFG-NCN Beethoven Classic 3 project SCHI419/11-1 is gratefully acknowledged.}

\begin{abstract}
    We prove a necessary and sufficient condition for the Liouville and strong Liouville properties of the infinitesimal generator of a L\'evy process and subordinate L\'evy processes. Combining our criterion with the necessary and sufficient condition obtained by Alibaud \emph{et al.}, we obtain a characterization of  (orthogonal subgroup of)  the set of zeros of the characteristic exponent of the L\'evy process.
\end{abstract}
\subjclass[2010]{\emph{Primary:} 60G51, 35B53. \emph{Secondary:} 31C05, 35B10, 35R09, 60J35.}
\keywords{Characteristic exponent; L\'evy generator; Liouville property; strong Liouville property; subordination.}

\maketitle

A $C^2$-function $f:\rn\to\real$ is called \textbf{harmonic}, if $\Delta f = 0$ for the Laplace operator $\Delta$. The classical Liouville theorem states that any bounded harmonic function is constant. Often it is helpful to understand $\Delta f$ as a Schwartz distribution in $\Dcal'(\rn)$ and to re-formulate the Liouville problem in the following way: The operator $\Delta$ enjoys the \textbf{Liouville property} if
\begin{gather}\label{vor-e02}
    f\in L^\infty(\rn) \;\;\text{and}\;\; \forall \phi\in C_c^\infty(\rn)\::\:\scalp{\Delta f}{\phi} := \scalp{f}{\Delta \phi} = 0 \implies f\equiv\textup{const}
\end{gather}
holds; $\scalp{\cdot}{\cdot}$ denotes the (real) dual pairing used in the theory of distributions. An excellent account on the history and the importance of the Liouville property can be found in the paper \cite{ali-et-al} by Alibaud \emph{et al.} If the condition `$f\in L^\infty(\rn)$' in \eqref{vor-e02} can be replaced by `$f\geq 0$', we speak of the \textbf{strong Liouville property}.

Like \cite{ali-et-al} we are interested in the analogue of \eqref{vor-e02} for a class of non-local operators with constant `coefficients'. Recall that $\frac 12\Delta$ is the infinitesimal generator of Brownian motion. This is a diffusion process with stationary and independent increments and continuous paths. If we give up the continuity of the sample paths, and consider stochastic processes with independent, stationary increments and right-continuous paths with finite left-hand limits, we get the family of \textbf{L\'evy processes}, cf.\ \cite{sato,barca}. It is well known, see \cite{jac-1,barca}, that the infinitesimal generator $\Lcal_\psi$ of a L\'evy process (in any space $L^p(\rn)$, $1\leq p < \infty$ or in $C_\infty(\rn)= \overline{C_c^\infty(\rn)}^{\|\bullet\|_\infty}$, cf.\ \cite[Proposition 12.7]{ber-for} and \cite[Example 1.3.e), p.~4]{LM3}) is a \textbf{pseudo-differential operator}
\begin{gather}\label{vor-e04}
    \widehat{\Lcal_\psi u}(\xi) = -\psi(\xi)\widehat u(\xi),\quad u\in \Scal(\rn)
\end{gather}
where $\widehat u(\xi) = (2\pi)^{-n}\int_\rn \eup^{-\iup\xi\cdot x} u(x)\,dx$ is the Fourier transform and $\Scal(\rn)$ is the Schwartz space of rapidly decreasing smooth functions; the inverse Fourier transform is denoted by $\widecheck u$.  The \textbf{symbol} $\psi:\rn\to\comp$ is a \textbf{continuous and negative definite} function which is uniquely characterized by its \textbf{L\'evy--Khintchine representation}
\begin{gather}\label{vor-e06}
    \psi(\xi)
    = -\iup b\cdot\xi + \frac 12 Q\xi\cdot\xi + \int_{\rn\setminus\{0\}}\left(1-\eup^{\iup  \xi\cdot x}+\iup\xi\cdot x\I_{(0,1)}(|x|)\right)\nu(dx);
\end{gather}
the `coefficients' $b\in\rn$, $Q\in\real^{n\times n}$ (a positive semidefinite matrix) and $\nu$ (a Radon measure on $\rn\setminus\{0\}$ such that $\int_{\rn\setminus\{0\}} \min\{|x|^2,1\}\,\nu(dx)<\infty$) uniquely describe $\psi$. In probabilistic language, the symbol $\psi$ is known as the \textbf{characteristic exponent} of the L\'evy process $(X_t)_{t\geq 0}$ and $(b,Q,\nu)$ is the \textbf{L\'evy triplet}. This is due to the fact that the law $\mu_t(dy) := \Pp(X_t\in dy)$ of the random variable $X_t$ satisfies
\begin{gather}\label{vor-e08}
    \Ee \eup^{\iup \xi\cdot X_t} = \widecheck{\mu_t}(\xi) = \eup^{-t\psi(\xi)},\quad t>0,\;\xi\in\rn;
\end{gather}
the particular structure of the expected value is intimately linked with the property that the random variable $X_t$, resp., $\mu_t$ is infinitely divisible, cf.\ \cite{sato}. It is clear from \eqref{vor-e06} that $\xi\mapsto\psi(A\xi)$ is, for any invertible matrix $A\in\real^{n\times n}$, again continuous and negative definite; in particular, negative definiteness is preserved under coordinate changes. From \eqref{vor-e04} we can see that the adjoint $\Lcal_\psi^*$ is also a L\'evy generator whose symbol $\overline\psi$ is the complex conjugate of $\psi$: Let $u,\phi\in\Scal(\rn)$. By Plancherel's theorem
\begin{gather}\label{vor-e10}
    \scalp{\Lcal_\psi u}{\phi}
    =\scalp{\widehat{\Lcal_\psi u}}{\widecheck \phi}
    =\scalp{-\psi\widehat{u}}{\widecheck \phi}
    =\scalp{\widehat{u}}{-\psi\widecheck \phi}
    =\scalp{\widehat{u}}{\widecheck{\Lcal_{\overline\psi} \phi}}
    =\scalp{u}{\Lcal_{\overline\psi} \phi}.
\end{gather}
Therefore we have $\Lcal^*_\psi = \Lcal_{\overline\psi}$ (in any space $L^p(\rn)$, $p\in [1,\infty)$ and in $C_\infty(\rn):=\overline{C^\infty_c(\rn)}^{\|\cdot\|_\infty}$) and the corresponding L\'evy process is just $(-X_t)_{t\geq 0}$.

If we combine \eqref{vor-e04} and \eqref{vor-e06} we get a further representation for $\Lcal_\psi$ on $\Scal(\rn)$
\begin{gather}\label{vor-e12}\begin{aligned}
    \Lcal_\psi u(x)
    &= b\cdot\nabla u(x) + \frac 12\nabla \cdot Q\nabla u(x) \\
    &\qquad\mbox{} + \int_{\rn\setminus\{0\}} \left(u(x+y)-u(x)-y\cdot\nabla u(x)\I_{(0,1)}(|y|)\right)\nu(dy).
\end{aligned}\end{gather}
It is not hard to see that this class of operators includes the fractional Laplace operator $-(-\Delta)^\alpha$ for $\alpha\in (0,2)$.

The main results of our paper is the following version of a Liouville theorem for non-local operators of the type described above:
\begin{theorem*}[Liouville]
    Let $\Lcal_\psi$ be the generator of a L\'evy process with characteristic exponent $\psi$. The operator $\Lcal_\psi$ has the Liouville property, i.e.\
    \begin{gather}\label{vor-e14}
        f\in L^\infty(\rn)\;\;\text{and}\;\;\forall\phi\in C_c^\infty(\rn)\::\:\scalp{\Lcal_\psi f}{\phi} = 0
        \implies f\equiv\textup{const}
    \end{gather}
    if, and only if, the zero-set of the characteristic exponent satisfies $\{\eta\in\rn\mid\psi(\eta)=0\}=\{0\}$.

    If, in addition, the L\'evy measure $\nu$ satisfies $\int_{|y|\geq 1} g(y)\,\nu(dy)$ for some submultiplicative function $g:\rn\to [0,\infty)$ \textup{(}for the definition of a submultiplicative function see Definition \ref{gliou-23}\textup{)}, then
    \begin{gather}\label{vor-e15}
        0\leq f\leq g\;\;\text{and}\;\;\forall\phi\in C_c^\infty(\rn)\::\:\scalp{\Lcal_\psi f}{\phi} = 0
        \implies f\equiv\textup{const}
    \end{gather}
    if, and only if, $\{\eta\in\rn\mid\psi(\eta)=0\}\cup\{\eta\in\rn\mid\psi(-\iup\eta)=0\}=\{0\}$.
\end{theorem*}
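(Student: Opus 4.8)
\emph{Overall strategy and necessity.}
Both biconditionals are handled by the same pattern: the ``only if'' part follows from explicit non-constant solutions, and the ``if'' part from a reduction to the invariance $P_tf=f$ under the transition semigroup $P_tf(x)=\Ee f(x+X_t)=\int_\rn f(x+y)\,\mu_t(dy)$, followed by a Choquet--Deny type argument. For necessity, assume $\eta\neq 0$ lies in the relevant zero set. From \eqref{vor-e12} one checks directly that, as ordinary functions, $\Lcal_\psi\eup^{\iup\eta\cdot x}=-\psi(\eta)\eup^{\iup\eta\cdot x}$ and $\Lcal_\psi\eup^{\eta\cdot x}=-\psi(-\iup\eta)\eup^{\eta\cdot x}$ (the latter whenever $\psi(-\iup\eta)$ is finite, in particular when it vanishes); a Fubini argument combined with the duality $\scalp{\Lcal_\psi\bullet}{\phi}=\scalp{\bullet}{\Lcal_{\overline\psi}\phi}$ from \eqref{vor-e10} shows these identities persist when tested against $\phi\in C_c^\infty(\rn)$. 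Hence if $\psi(\eta)=0$ the bounded non-constant functions $\cos(\eta\cdot x),\sin(\eta\cdot x)$ violate \eqref{vor-e14}, and (after an obvious normalisation keeping it below $g$) $1+\cos(\eta\cdot x)$ violates \eqref{vor-e15}; and if $\psi(-\iup\eta)=0$, then $\eup^{\eta\cdot x}$ is a non-negative non-constant solution dominated by a multiple of $g$, so \eqref{vor-e15} fails.

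\emph{Sufficiency, first step: from the distributional equation to $P_tf=f$.}
Mollify, $f_\varepsilon:=f*\rho_\varepsilon$ with $\rho_\varepsilon\in C_c^\infty(\rn)$. Since $\Lcal_\psi$ has constant coefficients, $\Lcal_\psi f_\varepsilon=(\Lcal_\psi f)*\rho_\varepsilon=0$, now as an honest pointwise identity via \eqref{vor-e12}; $f_\varepsilon$ stays in the class, with $f_\varepsilon\le C_\varepsilon g$ in the setting of \eqref{vor-e15} because $g$ is submultiplicative and locally bounded; and $f_\varepsilon$, being smooth with $f_\varepsilon,\nabla f_\varepsilon,\nabla^2 f_\varepsilon$ controlled by a multiple of $g$ (resp.\ bounded), lies in the (pointwise) domain of the generator, so Dynkin's formula gives $P_tf_\varepsilon=f_\varepsilon+\int_0^t P_s\Lcal_\psi f_\varepsilon\,ds=f_\varepsilon$. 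Using $P_t(f*\rho_\varepsilon)=(P_tf)*\rho_\varepsilon$ and letting $\varepsilon\downarrow 0$ yields $P_tf=f$ a.e., for every $t>0$. For \eqref{vor-e15} this step needs $P_t$ to act on $g$-dominated functions, i.e.\ $\int_\rn g\,d\mu_t<\infty$; this is exactly where the hypotheses ``$g$ submultiplicative'' and ``$\int_{|y|\ge 1}g\,\nu(dy)<\infty$'' are used, via the classical criterion for the finiteness of $\Ee\,g(X_t)$.

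\emph{Sufficiency, second step: classifying $P_t$-invariant functions.}
Now let $P_tf=f$ for all $t>0$. For \eqref{vor-e14}, $f$ is bounded and $\mu_t$-harmonic for each $t$; the Choquet--Deny theorem then makes $f$ invariant under translations by the closed subgroup $Z^{\boxperp}$, where $Z:=\{\xi\in\rn\mid\Re\psi(\xi)=0\}$ (note that $|\widehat{\mu_t}(\xi)|=\eup^{-t\Re\psi(\xi)}$ has the same modulus-one set $Z$ for every $t$). Descending to the quotient group $\rn/Z^{\boxperp}$, whose dual is $Z$ and on which the push-forward of $\mu_t$ has a modulus-one characteristic function, hence is a Dirac mass $\delta_{a_t}$, the relation $P_tf=f$ says that $f$ is $a_t$-periodic for every $t$; the annihilator in $Z$ of the one-parameter subgroup $(a_t)_{t>0}$ equals $\{\xi\in Z\mid\psi(\xi)=0\}=\{\psi=0\}=\{0\}$ by hypothesis, so the closed subgroup generated by $(a_t)_{t>0}$ is all of $\rn/Z^{\boxperp}$ and $f\equiv\textup{const}$. (Alternatively one may argue by a successful coupling of two copies of the process, the coupling succeeding precisely under the aperiodicity $\{\psi=0\}=\{0\}$.) For \eqref{vor-e15} the same scheme is applied to $0\le f\le g$, with the proviso that $g$-growth now also permits the exponential invariant functions, $P_t\eup^{\eta\cdot x}=\eup^{-t\psi(-\iup\eta)}\eup^{\eta\cdot x}$; an exponential (Esscher) change of measure $d\widetilde{\Pp}/d\Pp|_t=\eup^{\eta\cdot X_t+t\psi(-\iup\eta)}$ along an admissible $\eta$ strips off the exponential factor and reduces matters to the bounded case, so that the invariant functions in $\{0\le f\le g\}$ are constant precisely when $\{\psi=0\}\cup\{\psi(-\iup\eta)=0\}=\{0\}$.

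\emph{Main obstacle.}
The technically hardest point is the first step of sufficiency in the $g$-dominated regime: one must keep the mollified functions in the class with bounds uniform enough to pass to the limit, justify Dynkin's formula (equivalently, the martingale property of $s\mapsto f(X_s)$) for functions of at most exponential growth, and control $\Ee\,g(X_t)$ through the submultiplicativity of $g$. A secondary difficulty is the precise classification of $g$-dominated $P_t$-invariant functions in the second step, where the bounded ``periodic'' part and the exponentially growing part must be disentangled---this is what yields the enlarged condition $\{\psi=0\}\cup\{\psi(-\iup\eta)=0\}=\{0\}$ rather than merely $\{\psi=0\}=\{0\}$.
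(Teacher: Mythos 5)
Your first half (the bounded Liouville property) and your reduction step are essentially the paper's probabilistic proof: mollify, pass from the distributional equation to $\Pcal_t f=f$ (the paper does this via Lemma~\ref{prob-17} for bounded $f$ and via Dynkin's formula plus the uniform-integrability and $L^1$-estimates of Lemmas~\ref{gliou-25}--\ref{gliou-29} in the $g$-dominated case, which is what you sketch), and then invoke Choquet--Deny (Theorem~\ref{prob-15}); your quotient-group bookkeeping with $Z=\{\Re\psi=0\}$ and the Dirac masses $\delta_{a_t}$ is an acceptable coordinate-free variant of the paper's reduction to $\Per(f)=\integer^n$ and Lemma~\ref{prob-19}. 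The genuine gap is in the second half, the classification of positive $g$-dominated invariant functions. The Choquet--Deny theorem you rely on is a statement about \emph{bounded} continuous solutions of $h=h*\mu$ (``every point of $\supp\mu$ is a period''), and it is simply false for unbounded positive solutions: $\eup^{\eta\cdot x}$ is $\mu_t$-harmonic without $\supp\mu_t$ lying in its period group. What is needed here is Deny's integral representation (Theorem~\ref{gliou-22}): when $\supp\mu_t$ generates $\rn$ (this is exactly where $\{\psi=0\}=\{0\}$ enters), \emph{every} positive solution of $h=h*\mu_t$ is a mixture $\int\eup^{\xi\cdot x}\,\kappa(d\xi)$ over the harmonic exponentials $E(\mu_t)=\{\eta\mid\psi(-\iup\eta)=0\}$ (Lemma~\ref{gliou-31}), so that $\{\psi(-\iup\cdot)=0\}=\{0\}$ forces $f$ to be constant. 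Your substitute---``an exponential (Esscher) change of measure along an admissible $\eta$ strips off the exponential factor and reduces matters to the bounded case''---presupposes that the solution has the form $\eup^{\eta\cdot x}$ times a bounded invariant function for the tilted process; a general positive invariant function dominated by $g$ has no such single exponential factor, and without the Deny representation (or an equivalent extreme-point/Martin-boundary analysis) this step does not go through. This is not a technicality: it is the key new ingredient that distinguishes the strong Liouville theorem from the bounded one.

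A secondary problem is the necessity direction of \eqref{vor-e15}. The dominations you assert---that some positive multiple of $1+\cos(\eta\cdot x)$ lies below $g$, and that $\eup^{\eta\cdot x}$ is ``dominated by a multiple of $g$'' whenever $\psi(-\iup\eta)=0$---are not automatic for an arbitrary submultiplicative $g$ with $\int_{|y|\geq 1}g\,d\nu<\infty$: a submultiplicative $g$ may have infimum $0$, and it may grow strictly slower than $\eup^{\eta\cdot x}$ even though the moment $\int_{|y|\geq1}\eup^{\eta\cdot y}\,\nu(dy)$ is finite. The paper's precise formulation (Theorem~\ref{gliou-35}) assumes $g\geq 1$, obtains the real-zero part of necessity by observing that the $g$-bounded Liouville property contains the bounded one (Theorem~\ref{prob-21}), and handles the complex zeros through the uniqueness of the Choquet/Deny representation rather than by exhibiting $\eup^{\eta\cdot x}$ inside the class $\{0\leq f\leq g\}$. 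As written, your counterexample construction needs either this extra hypothesis on $g$ or a different argument.
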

It is also possible to relax the boundedness assumption on $f$ resulting in a \textbf{strong Liouville theorem}, cf.\ Theorem~\ref{gliou-35}.

The first part of our theorem complements and gives new insight to the result by Alibaud et al.~\cite{ali-et-al}. They provide necessary and sufficient conditions for the \eqref{vor-e14} in terms of the L\'evy triplet $(b,Q,\nu)$ appearing in \eqref{vor-e06} (see Section~\ref{sec-notes} for the precise statement), which we can identify to be equivalent to $\{\eta\in\rn\mid\psi(\eta)=0\}$. In fact, if we combine \cite{ali-et-al} and our criterion, it turns out that Alibaud et al.\ actually describe the orthogonal subgroup of the zero set of $\psi$, i.e.\ $\{\psi = 0\}^{\boxperp} := \{x\in\rn \mid \forall \gamma\in\{\psi=0\} \,:\, \eup^{\iup \gamma\cdot x}=1\}$. Let us point out that the approach of \cite{ali-et-al} is completely different from ours and, it seems that it is not possible to get a strong Liouville theorem using their methods. Thus, the second part \eqref{vor-e15} of our theorem is new.

We will give two independent proofs for Liouville's theorem, one is mainly analytic (see Section~\ref{sec-ana}) and another which is mainly probabilistic (see Section~\ref{sec-prob}). Both approaches have their (dis-)advantages. The analytic argument, which is based on the structure of generalized functions (distributions) in the sense of L.\ Schwartz, explains the appearance of the condition $\{\psi=0\}=\{0\}$ in a very natural way---but it seems to work only for smooth symbols. The more probabilistic argument is very short, but it requires a deep `black-box' theorem due to Choquet and Deny on certain convolution equations for measures. On the upside, the proof works in all dimensions and links Liouville's theorem with renewal theory. Section~\ref{sec-gliou} contains a version of Liouville's theorem where the function $f$ is not necessarily bounded and the concluding Section~\ref{sec-notes} contains a few further applications.

\section{General preparations}\label{sec-gen}

An important point is the question how we should interpret in a statement like
\begin{gather*}
    f\in L^\infty(\rn),\quad \Lcal_\psi f = 0 \implies f\equiv\text{const}
\end{gather*}
the expression $\Lcal_\psi f$ if $f$ is not in the (proper) domain of the operator $\Lcal_\psi$. A natural way is to understand $\Lcal_\psi f$ weakly in the sense of generalized functions, i.e.\ $\scalp{\Lcal_\psi f}{\phi}:= \scalp{f}{\Lcal^*_\psi\phi} = \scalp{f}{\Lcal_{\overline\psi}\phi}$ for all test functions $\phi\in C_c^\infty(\rn)$. The following lemma shows that this is always possible if $f\in L^\infty(\rn)$.

\begin{lemma}\label{gen-01}
    Let $\Lcal_\psi$ be the generator of a L\'evy process. For every $f\in L^\infty(\rn)$
    \begin{gather}\label{gen-e16}
        \phi \mapsto \scalp{\Lcal_\psi f}{\phi} := \scalp{f}{\Lcal_\psi^*\phi} = \int_\rn f(x) \Lcal_{\overline\psi}\phi(x)\,dx,
        \quad\phi\in C_c^\infty(\rn)
    \end{gather}
    is a Schwartz distribution $\Lcal_\psi f\in\Dcal'(\rn)$ of order $\leq 2$.
\end{lemma}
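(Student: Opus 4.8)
The goal is to show that the linear functional $\phi\mapsto \int_\rn f(x)\Lcal_{\overline\psi}\phi(x)\,dx$ is well-defined and continuous on $C_c^\infty(\rn)$ with the stated order bound. Since $f\in L^\infty$, the only thing that needs care is that $\Lcal_{\overline\psi}\phi\in L^1(\rn)$ for every $\phi\in C_c^\infty(\rn)$, and that the map $\phi\mapsto \Lcal_{\overline\psi}\phi$ is continuous from $C_c^\infty$ (with its usual inductive-limit topology) into $L^1(\rn)$ in such a way that only derivatives of $\phi$ up to order $2$ enter the estimate. I would work from the representation \eqref{vor-e12} applied to $\overline\psi$, whose L\'evy triplet is $(-b,Q,\tilde\nu)$ with $\tilde\nu(dy)=\nu(-dy)$; the local part $b\cdot\nabla\phi + \tfrac12\nabla\cdot Q\nabla\phi$ is manifestly in $C_c^\infty\subset L^1$ with an $L^1$-norm controlled by $\|\phi\|_\infty$, $\|\nabla\phi\|_\infty$, $\|\nabla^2\phi\|_\infty$ and the (fixed) size of the support, so the whole issue is the non-local integral term.

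For the non-local part I would split the inner integral over $\{0<|y|<1\}$ and $\{|y|\ge 1\}$. On $\{|y|\ge 1\}$ write $\phi(x+y)-\phi(x)$; integrating in $x$ and using Tonelli,
\[
    \int_\rn \left| \int_{|y|\ge 1}\bigl(\phi(x+y)-\phi(x)\bigr)\,\tilde\nu(dy)\right| dx
    \le 2\|\phi\|_{L^1}\,\tilde\nu(\{|y|\ge 1\}) < \infty,
\]
since $\tilde\nu(\{|y|\ge1\})<\infty$ by the L\'evy-measure integrability condition. On $\{0<|y|<1\}$ I would use the second-order Taylor estimate $|\phi(x+y)-\phi(x)-y\cdot\nabla\phi(x)|\le \tfrac12|y|^2 \sup_{|z|\le 1}\|\nabla^2\phi(x+\theta y)\|$; because $\phi$ has compact support, the integrand in $x$ vanishes unless $x$ lies in a fixed bounded neighbourhood of $\supp\phi$, so after integrating in $x$ one gets a bound of the form $C\,\|\nabla^2\phi\|_\infty\int_{0<|y|<1}|y|^2\,\tilde\nu(dy)<\infty$, again finite by the L\'evy condition. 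Adding the two pieces and the local part shows $\Lcal_{\overline\psi}\phi\in L^1(\rn)$ and gives an estimate
\[
    \left|\scalp{f}{\Lcal_{\overline\psi}\phi}\right|
    \le \|f\|_\infty\, C_K\,\bigl(\|\phi\|_\infty + \|\nabla\phi\|_\infty + \|\nabla^2\phi\|_\infty\bigr)
\]
valid for all $\phi\in C_c^\infty(\rn)$ supported in a fixed compact set $K$, where $C_K$ depends only on $K$ and on $(b,Q,\nu)$. This is exactly the continuity requirement for a distribution of order $\le 2$, so $\Lcal_\psi f\in\Dcal'(\rn)$ with order $\le 2$.

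The main (and essentially only) obstacle is the near-origin behaviour of the non-local term: one must see that the compensator $-y\cdot\nabla\phi(x)\I_{(0,1)}(|y|)$ in \eqref{vor-e12} together with compact support of $\phi$ is precisely what turns the possibly non-integrable singularity of $\nu$ at $0$ into the harmless $\int_{0<|y|<1}|y|^2\,\nu(dy)$, which is finite by hypothesis; everything else is routine once Tonelli and the compact-support localization are invoked. One should also remark (or silently use) that the integrand in \eqref{vor-e12} is jointly measurable, so that the Fubini-Tonelli interchange of $\int dx$ and $\int\nu(dy)$ is legitimate.
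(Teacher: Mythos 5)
Your argument is correct and is essentially the paper's proof carried out in full: the paper invokes the Taylor-formula estimate $\|\Lcal_{\overline\psi}\phi\|_{L^1}\leq c\sum_{|\alpha|\leq 2}\|\partial^\alpha\phi\|_{L^1}\leq c_K\sum_{|\alpha|\leq 2}\|\partial^\alpha\phi\|_{L^\infty}$ for $\phi\in C_c^\infty(K)$ (citing an external lemma), while you derive the same bound by hand via the representation \eqref{vor-e12}, splitting the L\'evy measure at $|y|=1$ and using the second-order Taylor remainder near the origin. No gaps; the only cosmetic point is that your constant $C_K$ absorbs the support-size factors exactly as the paper's $c_K$ does.
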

\begin{proof}\footnote{The lemma is also a direct consequence of the positive maximum principle: If $\phi\in \Scal(\rn)$ attains a positive maximum at $x_0$, then $\Lcal_\psi\phi(x_0)\leq 0$; see \cite[Theorem 6.2]{barca} and \cite[Theorem 2.21]{LM3}.}
Using Taylor's formula it is not hard to see that $\|\Lcal_\psi u\|_{L^p} \leq c\sum_{|\alpha|\leq 2} \|\partial^\alpha u\|_{L^p}$ for all $1\leq p\leq \infty$ and $u\in C_c^\infty(\rn)$, see e.g.~\cite[Lemma~3.4]{ieot}.
If we use $p=1$ and $\overline\psi$ instead of $\psi$, we conclude that
$\|\Lcal_\psi^*\phi\|_{L^1} = \|\Lcal_{\overline\psi}\phi\|_{L^1} \leq c\sum_{|\alpha|\leq 2} \|\partial^\alpha \phi\|_{L^1} \leq c_{K}\sum_{|\alpha|\leq 2} \|\partial^\alpha \phi\|_{L^\infty}$ for all $\phi\in C_c^\infty(K)$ and all compact sets $K\subset\rn$.
\end{proof}

In Section~\ref{sec-gliou} we will see further extensions of $\Lcal_\psi f$ to a class of functions $f$ which are not necessarily bounded.

\begin{remark}\label{gen-03}
    It is a classical observation, see e.g.\ \cite[Lemma 1.5, p. 233]{ali-et-al}, that the smoothness of $f$ is not essential. Since $\Lcal_\psi$ has constant `coefficients'---i.e.\ the triplet $(b,Q,\nu)$ appearing in \eqref{vor-e06} and \eqref{vor-e12} does not depend on $x$---it commutes with convolutions. If $j_\epsilon\in C_c^\infty(\rn)$ is the standard kernel for the Friedrichs mollifier and if the convolution $j_\epsilon*f$ is well-defined, we still have $\Lcal_\psi(j_\epsilon*f)=0$ and we infer from $j_\epsilon*f = \text{const}$ that $f$ is constant.
\end{remark}

We need the following results on the zero set of a continuous negative definite function.
\begin{lemma}\label{gen-05}
    Let $\psi:\rn\to\comp$ be the symbol of the generator of a L\'evy process. The following estimates hold
    \begin{gather}\label{gen-e18}
        \sqrt{|\psi(\xi+\eta)|} \leq \sqrt{|\psi(\xi)|} + \sqrt{|\psi(\eta)|},\quad\xi,\eta\in\rn;\\
    \label{gen-e20}
        |\psi(\xi)+\overline{\psi(\eta)}-\psi(\xi-\eta)|^{2} \leq 4|\psi(\xi)| |\psi(\eta)|,\quad\xi,\eta\in\rn;
    \end{gather}
    in particular, the zero-set $\{\psi=0\} = \{\xi\in\rn \mid \psi(\xi) = 0\}$ is a closed subgroup of $(\rn,+)$ which is either discrete or of the form $G\oplus E$ for some subspace $E\subset\rn$ and a closed discrete subgroup \textup{(}a relative lattice\textup{)} $G\subset E^\bot$.
    If $\{\psi=0\}\supsetneqq\{0\}$ is not trivial, $\psi$ must be periodic, hence the restriction of $\psi$ to $\spann(G)$ is bounded.
\end{lemma}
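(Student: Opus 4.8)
The statement to prove is Lemma~\ref{gen-05}. My plan is to prove the two quantitative inequalities first and then read off all the structural consequences.

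The plan is to obtain the two analytic estimates \eqref{gen-e18} and \eqref{gen-e20} from one ``master inequality'' for $\psi$, which I would derive by exploiting that $\eup^{-t\psi}$ is a continuous \emph{positive} definite function for every $t>0$ (this is immediate from \eqref{vor-e08}, since $\eup^{-t\psi}=\widecheck{\mu_t}$ is the Fourier transform of a probability measure, hence positive definite by Bochner); once the estimates are in hand, all structural assertions about $Z:=\{\psi=0\}$ will follow with little extra work.

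For the master inequality I would first record the elementary facts $\psi(0)=0$, $\overline{\psi(\xi)}=\psi(-\xi)$ and $\Re\psi\geq 0$, all read off from \eqref{vor-e06}. Writing $\phi_t:=\eup^{-t\psi}$, a standard $3\times 3$ determinant computation for a continuous positive definite function with $\phi_t(0)=1$ (applied at the three points $0,\xi,\xi+\eta$) gives
\begin{gather*}
    |\phi_t(\xi+\eta)-\phi_t(\xi)\phi_t(\eta)|^{2}\leq\bigl(1-|\phi_t(\xi)|^{2}\bigr)\bigl(1-|\phi_t(\eta)|^{2}\bigr).
\end{gather*}
Since $\phi_t(\zeta)=1-t\psi(\zeta)+O(t^{2})$ and $|\phi_t(\zeta)|^{2}=\eup^{-2t\Re\psi(\zeta)}$ for fixed $\zeta$, I would divide by $t$ and let $t\downarrow 0$ to obtain
\begin{gather*}
    |\psi(\xi)+\psi(\eta)-\psi(\xi+\eta)|\leq 2\sqrt{\Re\psi(\xi)}\,\sqrt{\Re\psi(\eta)}\leq 2\sqrt{|\psi(\xi)|\,|\psi(\eta)|}.
\end{gather*}
Replacing $\eta$ by $-\eta$, using $\psi(-\eta)=\overline{\psi(\eta)}$ and $\Re\psi(-\eta)=\Re\psi(\eta)$, and squaring yields \eqref{gen-e20}. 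For \eqref{gen-e18} I would combine the triangle inequality with the displayed bound: $|\psi(\xi+\eta)|\leq|\psi(\xi)|+|\psi(\eta)|+2\sqrt{|\psi(\xi)|\,|\psi(\eta)|}=\bigl(\sqrt{|\psi(\xi)|}+\sqrt{|\psi(\eta)|}\bigr)^{2}$, and take square roots.

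Next I would treat the zero set. Continuity of $\psi$ makes $Z$ closed, and $0\in Z$. If $\xi,\eta\in Z$, then $|\psi(-\eta)|=|\psi(\eta)|=0$ and \eqref{gen-e18} forces $\sqrt{|\psi(\xi-\eta)|}\leq\sqrt{|\psi(\xi)|}+\sqrt{|\psi(-\eta)|}=0$, so $\xi-\eta\in Z$; hence $Z$ is a closed subgroup of $(\rn,+)$. Appealing to the classical classification of closed subgroups of $\rn$, I would take $E:=\{x\in\rn\mid\real x\subseteq Z\}$ to be the largest linear subspace contained in $Z$ and set $G:=Z\cap E^{\bot}$, so that $Z=E\oplus G$ orthogonally; $G$ is discrete, for otherwise its closure (contained in the closed set $Z$) would contain a line, which would then lie in $E\cap E^{\bot}=\{0\}$. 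Thus $Z$ is either discrete (case $E=\{0\}$) or of the stated form $G\oplus E$ with $G\subset E^{\bot}$ a relative lattice. Finally, for periodicity: for any $\gamma\in Z$, \eqref{gen-e20} with $\eta=\gamma$ gives $|\psi(\xi)+\overline{\psi(\gamma)}-\psi(\xi-\gamma)|^{2}\leq 4|\psi(\xi)|\,|\psi(\gamma)|=0$, hence $\psi(\xi-\gamma)=\psi(\xi)$ for all $\xi$; so $\psi$ is periodic with period group containing $Z$, and in particular $\psi|_{\spann(G)}$ is $G$-periodic. Since $G$ is a full lattice in $\spann(G)$, this restriction factors through the compact torus $\spann(G)/G$ and is therefore bounded.

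The step I expect to be the main obstacle is the master inequality: one has to select the right quadratic inequality for positive definite functions and carry out the $t\downarrow 0$ expansion carefully. Everything else is comparatively routine---subadditivity of $\sqrt{|\psi|}$, the closed-subgroup property, the structure theorem, and the torus argument---with the one genuinely useful point being that \eqref{gen-e20} is precisely what upgrades ``$|\psi|$ is $Z$-periodic'' (which \eqref{gen-e18} alone would give) to ``$\psi$ is $Z$-periodic''.
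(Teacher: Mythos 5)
Your proposal is correct, and its structural half coincides with the paper's argument: closedness of $\{\psi=0\}$ and the subgroup property from \eqref{gen-e18} together with $\psi(-\eta)=\overline{\psi(\eta)}$, the appeal to the classification of closed subgroups of $(\rn,+)$, periodicity of $\psi$ from \eqref{gen-e20} applied with $\eta\in\{\psi=0\}$, and boundedness of $\psi|_{\spann(G)}$ from continuity plus compactness of a fundamental cell (your torus $\spann(G)/G$). Where you genuinely differ is the two estimates themselves: the paper simply cites Berg--Forst, Jacob, resp.\ Khoshnevisan--Schilling, whereas you make the lemma self-contained by deriving them from the positive definiteness of $\eup^{-t\psi}=\widecheck{\mu_t}$ via the three-point determinant inequality and a $t\downarrow 0$ limit; this is a valid and standard route (an equivalent shortcut, closer to the cited references, is that negative definiteness of $\psi$ with $\psi(0)=0$ makes the matrix with entries $\psi(\xi_i)+\overline{\psi(\xi_j)}-\psi(\xi_i-\xi_j)$ positive semidefinite, so \eqref{gen-e20} is just the $2\times 2$ determinant inequality and \eqref{gen-e18} follows as you indicate). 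Two cosmetic points: in the limiting step both sides of the squared determinant inequality are $O(t^2)$, so either take square roots before dividing by $t$ (as your unsquared master inequality suggests) or divide by $t^{2}$; and your ad hoc argument that $G=\{\psi=0\}\cap E^{\bot}$ is discrete (a closed non-discrete subgroup contains a line) is itself part of the Bourbaki classification you invoke, so you may as well quote that result outright, as the paper does.
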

\begin{proof}
    The inequalities \eqref{gen-e18}, \eqref{gen-e20} are well-known, see Berg and Forst \cite[Proposition 7.15]{ber-for}, Jacob \cite[Lemma 3.621]{jac-1} or \cite[Theorem 6.2]{barca} for various proofs. Since $\psi(-\eta)=\overline{\psi(\eta)}$, the inequality \eqref{gen-e18} and the continuity of $\psi$ show that $\{\psi=0\}$ is a closed subgroup of $\rn$. The decomposition follows from a structure result on closed subgroups of $\rn$, see
    Bourbaki \cite[Chapter VII, \S 1.2, p.~72]{bourbaki}.  If $\eta\in\{\psi=0\}$ and $\eta\neq 0$, the estimate \eqref{gen-e20} shows that $\psi$ is periodic with period $\eta$. Since $\psi$ is continuous, it is bounded on the unit cell of the lattice, hence bounded on the subspace spanned by the lattice.
\end{proof}

\section{An analytic approach}\label{sec-ana}

We have seen in Lemma~\ref{gen-01} that $\Lcal_\psi f$ may always be understood as an element of $\mathcal D'(\rn)$ if $f\in L^\infty(\rn)$. On the other hand, the calculation \eqref{vor-e10} shows that $\Lcal_\psi u$ can only be a Schwartz distribution $\mathcal S'(\rn)$ if $\psi$ is a pointwise multiplier for $\Scal(\rn)$, i.e.\ a $C^\infty$-function which is polynomially bounded along with all of its derivatives. In this section we will first discuss this particular case before we move on to a general proof. The following lemma is also a special case of \cite[Lemma 4.3]{kuehn17}. In order to keep the presentation self-contained, we include a short argument.

\begin{lemma}\label{ana-08}
    Let $\psi:\rn\to\comp$ be a continuous negative definite function given by \eqref{vor-e06}. Then $\psi\in C^\infty(\rn)$ if, and only if, the L\'evy measure $\nu$ has finite moments $\int_{|x|\geq 1} |x|^k\,\nu(dx)$ of any order $k\in\nat_0$. If this is the case, $\psi$ and all of its derivatives are polynomially bounded.
\end{lemma}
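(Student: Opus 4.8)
The plan is to treat the two implications separately: the `if' part by differentiating the L\'evy--Khintchine integrand under the integral sign, and the `only if' part by recovering the moments of $\nu$ from finite differences of $\Re\psi$ via Fatou's lemma, without ever differentiating inside the integral.

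For the `if' part, suppose $\int_{|x|\geq 1}|x|^k\,\nu(dx)<\infty$ for every $k\in\nat_0$. The polynomial part $-\iup b\cdot\xi+\frac12 Q\xi\cdot\xi$ of \eqref{vor-e06} is smooth, so only the integral term matters; I would split it over $\{|x|<1\}$ and $\{|x|\geq 1\}$. On $\{|x|<1\}$ a second-order Taylor expansion shows that $\xi\mapsto 1-\eup^{\iup\xi\cdot x}+\iup\xi\cdot x$ and each of its $\xi$-derivatives is, locally uniformly in $\xi$, of order $O(|x|^2)$ (for $|\alpha|\geq 2$ the $\alpha$-th derivative is $-\iup^{|\alpha|}x^\alpha\eup^{\iup\xi\cdot x}$ and $|x|^{|\alpha|}\leq|x|^2$ uniformly in $\xi$), hence dominated by $c_\alpha(\xi)\,|x|^2$, which is $\nu$-integrable over $\{|x|<1\}$ since $\int_{|x|<1}|x|^2\,\nu(dx)<\infty$. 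On $\{|x|\geq 1\}$ the compensator $\iup\xi\cdot x\I_{(0,1)}(|x|)$ vanishes, the integrand $1-\eup^{\iup\xi\cdot x}$ is bounded, and for $|\alpha|\geq 1$ its $\alpha$-th derivative $-\iup^{|\alpha|}x^\alpha\eup^{\iup\xi\cdot x}$ is dominated by $|x|^{|\alpha|}$, which is $\nu$-integrable over $\{|x|\geq 1\}$ precisely by the moment hypothesis. Repeated differentiation under the integral sign then yields $\psi\in C^\infty(\rn)$, and reading off the resulting expressions shows that $\partial^\alpha\psi$ is bounded for $|\alpha|\geq 2$ and at most linear for $|\alpha|=1$, while $|\psi(\xi)|\leq c\,(1+|\xi|^2)$ follows from the subadditivity of $\sqrt{|\psi|}$ in \eqref{gen-e18}; in particular $\psi$ and all its derivatives are polynomially bounded.

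For the `only if' part, suppose $\psi\in C^\infty(\rn)$ and write $\Re\psi(\xi)=\frac12 Q\xi\cdot\xi+\psi_0(\xi)$, where $\psi_0(\xi):=\int_{\rn\setminus\{0\}}\bigl(1-\cos(\xi\cdot x)\bigr)\,\nu(dx)\geq 0$ is finite for every $\xi$ and lies in $C^\infty(\rn)$. Fix $j\in\{1,\dots,n\}$ and let $\Delta^2_h g(\xi):=g(\xi+he_j)-2g(\xi)+g(\xi-he_j)$ be the second central difference in the $j$-th coordinate. Since $\Delta^2_h$ applied in $\xi$ to $\xi\mapsto 1-\cos(\xi\cdot x)$ gives $\xi\mapsto 4\sin^2(hx_j/2)\cos(\xi\cdot x)$ and applied to $\xi\mapsto\cos(\xi\cdot x)$ gives $\xi\mapsto-4\sin^2(hx_j/2)\cos(\xi\cdot x)$, iterating yields, for every $k\in\nat$,
\[
    (\Delta^2_h)^k\psi_0(0)=(-1)^{k-1}\int_{\rn\setminus\{0\}}\bigl(2\sin(hx_j/2)\bigr)^{2k}\,\nu(dx),
\]
the left-hand side being a finite linear combination of values of $\psi_0$, hence finite (so the integral is finite as well). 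Dividing by $h^{2k}$ and letting $h\to 0$, the left-hand side converges to $\partial_{\xi_j}^{2k}\psi_0(0)$ by the classical fact that $h^{-2k}(\Delta^2_h)^k g\to g^{(2k)}$ for $g\in C^{2k}$, applied to $\psi_0$ along the $j$-th axis, while the nonnegative integrand $\bigl(2\sin(hx_j/2)/h\bigr)^{2k}$ converges pointwise to $x_j^{2k}$; Fatou's lemma gives $\int_{\rn\setminus\{0\}}x_j^{2k}\,\nu(dx)\leq\bigl|\partial_{\xi_j}^{2k}\psi_0(0)\bigr|<\infty$ for all $j$ and $k$. Since $|x|^{2k}=(x_1^2+\dots+x_n^2)^k\leq n^k\max_{1\leq j\leq n}x_j^{2k}$, it follows that $\int_{\rn\setminus\{0\}}|x|^{2k}\,\nu(dx)<\infty$, whence $\int_{|x|\geq 1}|x|^m\,\nu(dx)\leq\int_{|x|\geq 1}\bigl(1+|x|^{2m}\bigr)\,\nu(dx)<\infty$ for every $m\in\nat_0$.

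The `if' direction is routine bookkeeping with Taylor's formula and dominated convergence. The delicate point is the `only if' direction: the tempting move is to differentiate $\psi_0$ under the integral sign, but whether this is permitted is exactly what has to be shown, so the finite-difference identity combined with Fatou's lemma is used to extract the moment bounds directly; the only inputs needed from outside are the classical convergence $h^{-2k}(\Delta^2_h)^k g\to g^{(2k)}$ for $g\in C^{2k}$ and the elementary observation that finite differences of the everywhere-finite function $\psi_0$ are finite.
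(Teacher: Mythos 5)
Your proof is correct. The `if' direction is essentially the paper's argument: the same case distinction on $|\alpha|$ for the derivatives of the L\'evy--Khintchine integrand, domination by $|x|^2$ near the origin and by $|x|^{|\alpha|}$ on $\{|x|\geq 1\}$, and differentiation under the integral sign, with the same conclusion about polynomial bounds. The `only if' direction, however, takes a genuinely different route. The paper restricts the real part to rays, $\phi_\xi(t)=\Re\psi(t\xi)$, uses the mean value theorem together with the evenness of $\phi_\xi$ to show that the difference quotient $t^{-2}\bigl(\phi_\xi(t)-\phi_\xi(0)\bigr)$ is controlled by a quantity converging to $\phi_\xi''(0)$, applies Fatou to obtain the second moment, and then only sketches an induction `over $\phi_\xi^{(2k)}$' for the higher moments. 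You instead subtract the Gaussian part explicitly, establish the exact identity $(\Delta^2_h)^k\psi_0(0)=(-1)^{k-1}\int\bigl(2\sin(hx_j/2)\bigr)^{2k}\,\nu(dx)$ (the interchange of the finite linear combination with the integral is legitimate because each $\psi_0(\xi)$ is a finite integral of a nonnegative integrand), and combine the classical convergence $h^{-2k}(\Delta^2_h)^k g(0)\to g^{(2k)}(0)$ for $g\in C^{2k}$ with Fatou to get all even coordinate moments in one stroke. What each approach buys: yours makes the higher-order step explicit and uniform in $k$, replacing the paper's induction sketch by a closed-form identity at every order, at the price of invoking the central-difference convergence fact; the paper's version stays entirely within first- and second-order one-dimensional calculus at each step but leaves the bookkeeping for higher moments to the reader. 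Your reduction from coordinate moments $x_j^{2k}$ to $\int_{|x|\geq 1}|x|^m\,\nu(dx)<\infty$ is also correct.
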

\begin{proof}
    Without loss of generality we may assume that the representation of \eqref{vor-e06} $\psi$ consists only of the integral part.

    Note that the integrand appearing in \eqref{vor-e06} satisfies for any multi-index $\alpha\in\nat_0^n$,
    \begin{gather*}
        \partial_\xi^\alpha\left(1-\eup^{\iup \xi\cdot x} + \iup\xi\cdot x \I_{(0,1)}(|x|)\right)
        =
        \begin{cases}
        1-\eup^{\iup \xi\cdot x} + \iup\xi\cdot x \I_{(0,1)}(|x|), & |\alpha|=0\\
        -\iup^{|\alpha|} x^\alpha \eup^{\iup \xi\cdot x} + \iup x^\alpha\I_{(0,1)}(|x|), & |\alpha|=1\\
        -\iup^{|\alpha|} x^\alpha \eup^{\iup \xi\cdot x}, & |\alpha|\geq 2.
        \end{cases}
    \end{gather*}
    These expressions are bounded, up to a multiplicative constant, by $|x|^2(1+|\xi|^2)$ (if $|\alpha|=0$), $|x|^{|\alpha|+1}(1+|\xi|)$ (if $|\alpha|=1$) or $|x|^{|\alpha|}$ (if $|\alpha|\geq 2$), respectively.

    A routine application of the differentiability lemma for parameter-dependent integrals shows that $\int_{x\neq 0}|x|^{|\alpha|}\,\nu(dy)<\infty$ ensures that the derivatives up to order $|\alpha|\geq 2$ exist (and are polynomially bounded). Since $\int_{|x|\leq 1}|x|^2\,\nu(dx)<\infty$ is always satisfied, the condition becomes $\int_{|x|\geq 1}|x|^{|\alpha|}\,\nu(dx)<\infty$.

    In order to show the converse, we note that $\psi$ is smooth if, and only if, $\Re\psi$ and $\Im\psi$ are smooth. Define $\phi_\xi(t):=\Re\psi(t\xi)$, $t\in\real$. Since $\Re\psi$ is differentiable, the mean value theorem shows that for some $\theta=\theta(t)\in (0,1)$
    \begin{gather*}
        0 \leq \int_{\rn\setminus\{0\}} \frac{1-\cos(t\xi\cdot y)}{t^2}\,\nu(dy)
        = \frac{\phi_\xi(t)-\phi_\xi(0)}{t^2}
        = \frac{\phi_\xi'(t\theta)}{t}
        \leq \frac{\phi_\xi'(t\theta)}{t\theta}.
    \end{gather*}
    Since $\phi_\xi(\cdot)$ is an even function, we have $\phi_\xi'(0)=0$, and we see that the right-hand side converges to $\phi_\xi''(0)$ as $t\to 0$. Using Fatou's lemma on the left-hand side reveals
    \begin{gather*}
        \frac 12\int_{\rn\setminus\{0\}} (\xi\cdot y)^2\,\nu(dy)
        \leq \liminf_{t\to 0} \int_{\rn\setminus\{0\}} \frac{1-\cos(t\xi\cdot y)}{t^2}\,\nu(dy)
        \leq \phi_\xi''(0).
    \end{gather*}
    This gives $\int_{\rn\setminus\{0\}} |y|^2\,\nu(dy)<\infty$. Higher derivatives can be dealt with in a similar fashion using induction over $\phi^{(2k)}_{\xi}$, the $2k$-th derivative of $\phi_{\xi}$.

    The first part of the proof shows that all derivatives $\partial^\alpha\psi(\xi)$ are bounded by $c(1+|\xi|^2)$ (if $|\alpha|=0$) or  $c(1+|\xi|)$ (if $|\alpha|=1$) or a constant (if $|\alpha|\geq 2$).
\end{proof}

We can now state and prove our main theorem for smooth symbols.
\begin{theorem}[Liouville]\label{ana-10}
    Let $\Lcal_\psi$ be the generator of a L\'evy process with characteristic exponent $\psi$ and assume that $\psi$ is a $C^\infty$-function. The operator $\Lcal_\psi$ has the Liouville property \eqref{vor-e14} if, and only if, the zero-set of the symbol satisfies $\{\psi=0\}=\{0\}$.
\end{theorem}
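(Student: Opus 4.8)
The plan is to work on the Fourier side, where the hypothesis that $\psi$ is smooth and polynomially bounded (together with its derivatives, by Lemma \ref{ana-08}) makes $\Lcal_\psi$ a genuine operator on $\Scal'(\rn)$ with $\widehat{\Lcal_\psi f} = -\psi\,\widehat f$. The condition $\scalp{\Lcal_\psi f}{\phi}=0$ for all $\phi\in C_c^\infty(\rn)$ then says, after transferring via Plancherel as in \eqref{vor-e10}, that the tempered distribution $\psi\,\widehat f$ annihilates a dense subspace of test functions, hence $\psi\,\widehat f = 0$ in $\Scal'(\rn)$. The heart of the matter is therefore the distributional division problem: which $T\in\Scal'(\rn)$ satisfy $\psi\,T = 0$? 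The geometric content is that such $T$ must be supported in the zero-set $\{\psi=0\}$, and one wants to conclude that $\widehat f$ is a measure (in fact a sum of point masses, after more work) sitting on $\{\psi=0\}$.

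First I would treat the easy direction: if $\{\psi=0\}\supsetneq\{0\}$, pick $\eta\neq 0$ with $\psi(\eta)=0$; then the nonconstant bounded function $f(x)=\eup^{\iup\eta\cdot x}$ (or its real/imaginary part) satisfies $\Lcal_\psi f = -\psi(\eta)f = 0$ pointwise, hence weakly, so the Liouville property fails. For the converse, assume $\{\psi=0\}=\{0\}$ and let $f\in L^\infty(\rn)$ with $\psi\,\widehat f=0$ in $\Scal'(\rn)$. Then $\supp\widehat f\subset\{0\}$, so $\widehat f$ is a finite linear combination of $\delta_0$ and its derivatives $\partial^\alpha\delta_0$; equivalently $f$ is a polynomial. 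Since $f$ is also bounded, the only surviving term is the $\delta_0$-term, i.e.\ $f\equiv\textup{const}$. The one genuinely delicate point is justifying that $\supp\widehat f\subset\{\psi=0\}$ forces $\widehat f$ to be a combination of derivatives of $\delta_0$: this is the standard structure theorem for distributions supported at a point, and it applies because $\{0\}$ is a single point; but one should be slightly careful that $\widehat f$ a priori is only in $\Scal'(\rn)$, which is exactly the regularity class in which that structure theorem holds.

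The main obstacle I anticipate is the passage $\scalp{f}{\Lcal_{\overline\psi}\phi}=0\ \forall\phi\in C_c^\infty(\rn)\ \Longrightarrow\ \psi\,\widehat f = 0$ in $\Scal'(\rn)$. The subtlety is that $\Lcal_{\overline\psi}\phi$ need not have compact support, so one cannot directly say ``$\phi$ ranges over a dense set''; instead one must argue that $\{\Lcal_{\overline\psi}\phi : \phi\in C_c^\infty(\rn)\}$, or rather its Fourier image $\{-\overline\psi\,\widehat\phi\}$, is weak-$*$ dense enough in the relevant sense, or equivalently that $\widehat f$ (a tempered distribution since $f\in L^\infty\subset\Scal'$) pairs to zero against $\overline\psi\,\psi$-multiples of all Schwartz functions outside $\{\psi=0\}$. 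Concretely I would fix $\chi\in C_c^\infty(\rn)$ vanishing near $\{\psi=0\}=\{0\}$ and write $\chi = \psi\cdot(\chi/\psi)$ with $\chi/\psi\in C_c^\infty$ (here smoothness of $\psi$ and $\psi\neq 0$ on $\supp\chi$ are used crucially), so that $\scalp{\psi\widehat f}{\chi/\psi\cdot\text{(test)}}$ vanishes, giving $\widehat f = 0$ on $\rn\setminus\{0\}$. Once this localization argument is in place, the rest is the routine combination of the structure theorem and boundedness sketched above; Remark \ref{gen-03} handles the reduction to smooth $f$ if one prefers to run the argument for mollified $f$ first.
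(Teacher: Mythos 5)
Your plan is essentially the paper's own proof: pass to the Fourier side using that the smooth symbol is a pointwise multiplier of $\Scal(\rn)$ (Lemma~\ref{ana-08}), conclude $\supp\widehat f\subset\{\psi=0\}$, invoke the structure theorem for tempered distributions supported in points so that boundedness of $f$ kills all terms except the exponentials, and disprove Liouville when $\{\psi=0\}\supsetneq\{0\}$ via the bounded nonconstant solutions $\eup^{\iup\eta\cdot x}$ (the paper phrases this last step as a shifting argument, but it is the same idea). The one delicate point you flag—extending the weak identity from $\phi\in C_c^\infty(\rn)$ to all of $\Scal(\rn)$ so that division by $\psi$ off its zero set is legitimate—is resolved exactly as you suggest, by continuity of $\Lcal_{\overline\psi}:\Scal(\rn)\to\Scal(\rn)$, density of $C_c^\infty(\rn)$ in $\Scal(\rn)$, and bijectivity of the Fourier transform on $\Scal(\rn)$.
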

\begin{proof}
    Let $f\in L^\infty(\rn)$ and $\Lcal_\psi f = 0$ in $\Dcal'(\rn)$. In view of Lemma~\ref{gen-05} we can assume that $\{\psi=0\}$ is a discrete group, otherwise we would have a truly lower-dimensional problem. Since $\psi$ is smooth, it is a pointwise multiplier in $\Scal(\rn)$, see Lemma~\ref{ana-08}, and we conclude that in $\Scal'(\rn)$
    \begin{gather*}
        0
        = \scalp{\Lcal_\psi f}{\phi}
        = \scalp{f}{\Lcal_{\overline\psi} \phi}
        = \scalp{\widecheck f}{\widehat{\Lcal_{\overline\psi} \phi}}
        = \scalp{\widecheck f}{\overline\psi \widehat{\phi}}
    \end{gather*}
    holds for all $\phi\in\Scal(\rn)$ and all $\widehat\phi\in\Scal(\rn)$ (as the Fourier transform is bijective on $\Scal(\rn)$). We conclude that $\supp\widecheck f\subset \{\overline\psi=0\}=\{\psi=0\}$.

    We can now use a classical result on the structure of tempered distributions supported in single points, see e.g.\
    Tr\`eves \cite[Chapter 24]{treves},
    \begin{gather*}
        \smash[b]{\widecheck f = \sum_{\gamma\in\{\psi=0\}} \sum_{|\alpha|\leq n(\gamma)<\infty} c_{\alpha,\gamma} \partial^\alpha\delta_\gamma}
    \intertext{or, equivalently,}
        f(x) = \sum_{\gamma\in\{\psi=0\}} \sum_{|\alpha|\leq n(\gamma)<\infty} (2\pi)^{-n}c_{\alpha,\gamma} (\iup x)^\alpha \eup^{-\iup\gamma\cdot x}.
    \end{gather*}
    If, in addition, $f$ is bounded, we have $f(x) = (2\pi)^{-n}\sum_{\gamma\in\{\psi = 0\}} c_{0,\gamma} \eup^{-\iup\gamma\cdot x}$.

    If $\{\psi=0\}=\{0\}$, it is clear that the Liouville property holds. Conversely, if the Liouville property holds and if $\{\psi=0\}$ is not trivial, we can always shift a solution $\Lcal_\psi f=0$ in the following way: $f_\eta(x):= f(x)\eup^{-\iup\eta\cdot x}$, $\eta\in\{\psi=0\}$ and we get again $\Lcal_\psi f_\eta = 0$; this means that can always achieve that some $c_{0,\gamma}\neq 0$, $\gamma\neq 0$, and we have reached a contradiction to $f_\eta\equiv\text{const}$. Thus, the above representation shows that the Liouville property holds if, and only if, $\{\psi = 0\}$ is trivial. If $\{\psi = 0\}\supsetneqq\{0\}$, the solutions to $\Lcal_\psi f = 0$ are periodic with periodicity group given by the orthogonal subgroup (cf.\ \cite[Definition~2.8]{ber-for}) of the zero-set $\{\psi = 0\}^{\boxperp} := \{x\in\rn \mid \forall \gamma\in\{\psi=0\} \,:\, \eup^{\iup \gamma\cdot x}=1\}$.
\end{proof}

\begin{remark}\label{ana-13}
\textbf{a)\ }
    Although one can approximate (locally uniformly) any symbol $\psi$ with smooth symbols $\psi_k$---just cut off the L\'evy measure $\nu_k(dx) := \I_{B_k(0)}(x)\,\nu(dx)$---it seems to be difficult to obtain a general version of Theorem~\ref{ana-10} through a limiting argument.

\textbf{b)\ }
    It is possible to construct symbols on $\real$ which are continuous but nowhere differentiable. Consider, for example a variation of Weierstra{\ss}'s nowhere differentiable function (see \cite[Theorem 1.31, p. 303]{hardy})
    \begin{gather*}
        \psi(\xi) := \sum_{k=0}^\infty a^k\left(1-\cos(b^k\pi \xi)\right),
        \quad\xi\in\real,\quad a\in (0,1),\; b>1 \text{\ \ and\ \ } ab\geq 1
    \end{gather*}
 which is the characteristic exponent of a L\'evy process with L\'evy measure $\sum\limits_{k=0}^\infty a^k\,\delta_{b^k\pi}(dx)$.
\end{remark}

\section{A probabilistic proof}\label{sec-prob}

Our starting point is a theorem due to Choquet and Deny \cite{cho-den60}; full proofs are given in \cite{deny60}, a probabilistic (martingale) argument is due to Doob, Snell and Williamson \cite{dsw60}. Recall that the \textbf{support} of a measure $\mu$ is the complement of the union of all open $\mu$ null sets, i.e.\  $(\supp\mu)^c = \bigcup_{\{U\subset\rn \text{\ open} \,\mid\, \mu(U)=0\}}U$. This means that $x\in\supp\mu$ if, and only if, there is an open neighbourhood $U=U(x)$ with $\mu(U)>0$.

\begin{theorem}[Choquet--Deny]\label{prob-15}
    Let $\mu$ be a probability measure on $\rn$ and $h$ a bounded and continuous function. One has $h = h*\mu$ if, and only if, every point of the support of $\mu$ is a period of $h$.
\end{theorem}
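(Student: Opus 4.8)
The plan is to prove the Choquet--Deny theorem for $\rn$ by reducing to the (well-documented) statement about bounded harmonic functions and by exploiting the group structure of the set of periods. One direction is immediate: if every point of $\supp\mu$ is a period of $h$, then for $\mu$-a.e. $y$ one has $h(x-y)=h(x)$, hence $h*\mu(x)=\int h(x-y)\,\mu(dy)=h(x)\int\mu(dy)=h(x)$, so $h=h*\mu$. All the work is in the converse.

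For the converse, I would argue as follows. Assume $h=h*\mu$ with $h$ bounded and continuous. First I would iterate: $h=h*\mu^{*k}$ for every $k\in\nat$, and more generally $h=h*\sigma$ where $\sigma=\sum_{k\geq 0}2^{-k-1}\mu^{*k}$ is again a probability measure; the support of $\sigma$ is the closed sub-semigroup generated by $\supp\mu\cup\{0\}$. The key structural point is that the set of periods $P(h):=\{p\in\rn\mid h(\cdot-p)=h\}$ is a closed subgroup of $(\rn,+)$ (closedness uses continuity of $h$). So it suffices to show $\supp\mu\subset P(h)$; equivalently, using that $P(h)$ is a group, it suffices to show that $\supp\sigma\subset P(h)$, and since the group generated by $\supp\mu$ equals the group generated by $\supp\mu\cup\{0\}$, nothing is lost.

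Now the heart of the matter: one shows that $h$ is constant on each coset of the closed group $H$ generated by $\supp\mu$. Consider the quotient $\rn/H$; since $H$ is a closed subgroup, $\rn/H$ is a locally compact abelian group, and $\mu$ pushes forward to the point mass $\delta_0$ there, so the statement on $\rn/H$ is that a bounded continuous function invariant under convolution with $\delta_0$ is... trivially anything — that reduction is too lossy. Instead I would follow the classical route via the maximum principle / minimal functions, or, in the spirit of the present paper, cite the martingale argument of Doob--Snell--Williamson \cite{dsw60}: the random walk $S_n=Y_1+\dots+Y_n$ with i.i.d. increments of law $\mu$ makes $M_n:=h(x-S_n)$ a bounded martingale, hence $M_n$ converges a.s.; a $0$--$1$-type argument (or the Hewitt--Savage zero--one law together with the shift-invariance of $\limsup M_n-\liminf M_n$) forces the increments to leave $h$ unchanged, i.e. $Y_1\in P(h)$ a.s., which is exactly $\supp\mu\subset P(h)$.

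I expect the main obstacle to be making the zero--one / ergodicity step fully rigorous in the non-discrete, $\rn$-valued setting without simply invoking the black box: one must show that the tail-measurable (indeed exchangeable) random variable $\operatorname{osc}:=\limsup_n h(x-S_n)-\liminf_n h(x-S_n)$ is a.s. constant, then that this constant is $0$ by comparing $h(x-S_n)$ with $h(x-S_n-y)$ for $y\in\supp\mu$ and using that both sequences have the same a.s. limit, and finally transfer this from ``a.s. along the walk'' to ``for every $y\in\supp\mu$'' using continuity of $h$ and the definition of support. Since the paper explicitly treats this as a cited ``black-box'' result, I would keep the proof short: give the easy direction in full, reduce the hard direction to the subgroup-of-periods observation, and then cite \cite{cho-den60,deny60,dsw60} for the remaining ergodic input rather than reproving it.
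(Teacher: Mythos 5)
The paper does not prove this theorem at all: it is quoted as a ``black-box'' result with references to Choquet--Deny, Deny, and Doob--Snell--Williamson, so your decision to prove the easy direction, organize the reduction, and cite for the ergodic core is exactly in the spirit of the paper's treatment. Your easy direction is complete, and your martingale sketch names the right ingredients; one correction, though, so that the sketch would actually close if expanded: since $M_n=h(x-S_n)$ is a \emph{bounded} martingale, it converges a.s.\ automatically, so the oscillation $\limsup_n M_n-\liminf_n M_n$ is $0$ without any zero--one law. Where Hewitt--Savage is really needed is to show that the a.s.\ limit $Z_x=\lim_n h(x-S_n)$ is a.s.\ \emph{constant}: $Z_x$ is measurable with respect to the exchangeable $\sigma$-field (a finite permutation of the increments $Y_i$ alters only finitely many partial sums $S_n$), hence $Z_x=\Ee Z_x=h(x)$ a.s.\ by $L^1$-martingale convergence. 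Then writing $h(x-S_{n+1})=h\bigl((x-Y_1)-(Y_2+\dots+Y_{n+1})\bigr)$ and conditioning on $Y_1$ gives $h(x-Y_1)=h(x)$ a.s., i.e.\ $h(x-y)=h(x)$ for $\mu$-a.e.\ $y$, and continuity of $h$ together with the definition of $\supp\mu$ upgrades this to every $y\in\supp\mu$ and every $x$, i.e.\ $\supp\mu\subset\Per(h)$. With that repair (and dropping the unused $\sigma=\sum_k 2^{-k-1}\mu^{*k}$ and quotient-group detours, which you yourself discard), your proposal is a correct account of the Doob--Snell--Williamson argument the paper cites.
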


In order to apply Theorem~\ref{prob-15} to the Liouville problem, we need to reduce the problem to an assertion on probability measures. Since $\Lcal_\psi$ is the generator of a L\'evy process $(X_t)_{t\geq 0}$, the transition probabilities $\mu_t(dx):=\Pp(X_t\in dx)$, $t\geq 0$, form a convolution semigroup of probability measures. We write $\Pcal_t u(x):= u*\widetilde{\mu}_t(x) = \Ee u(X_t+x)$ ($\widetilde\mu_t(dy)=\mu_t(-dy)$) for the Markov semigroup associated with $(X_t)_{t\geq 0}$. Since $t\mapsto X_t$ is right-continuous, a straightforward application of the dominated convergence theorem shows that $t\mapsto \Pcal_t u(x)$ and $x\mapsto\Pcal_t u(x)$ are right-continuous, resp.\ continuous if $u\in C_b(\rn)$.
\begin{lemma}\label{prob-17}
    Let $\Lcal_\psi$ be the generator of a L\'evy process and denote by $(\Pcal_t)_{t\geq 0}$ and $(\Rcal_\lambda)_{\lambda>0}$ the corresponding semigroup and resolvent family, respectively, and let $f\in C_b(\rn)$. The following assertions are equivalent:
    \begin{enumerate}
    \item\label{prob-17-a} $\Lcal_\psi f=0$ weakly, i.e.\ $\scalp{\Lcal_\psi f}{\phi} = 0$ for all $\phi\in C_c^\infty(\rn)$.
    \item\label{prob-17-b} $\lambda\Rcal_\lambda f = f$ for all $\lambda>0$.
    \item\label{prob-17-c} $\Pcal_t f = f$ for all $t>0$.
    \end{enumerate}
\end{lemma}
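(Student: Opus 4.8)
The plan is to prove the three equivalences by going around the cycle \eqref{prob-17-a} $\Rightarrow$ \eqref{prob-17-c} $\Rightarrow$ \eqref{prob-17-b} $\Rightarrow$ \eqref{prob-17-a}, exploiting the standard relations between a Feller-type semigroup, its resolvent and its (weak) generator, together with the smoothing provided by the resolvent operators $\Rcal_\lambda$. The only genuinely delicate point is the first implication, because $f$ is merely bounded and continuous and need not lie in the domain of $\Lcal_\psi$; everything else is soft semigroup/resolvent algebra.

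\smallskip\noindent\emph{Step 1: \eqref{prob-17-c} $\Rightarrow$ \eqref{prob-17-b}.} This is immediate: if $\Pcal_t f = f$ for all $t>0$, then for every $\lambda>0$
\begin{gather*}
    \lambda\Rcal_\lambda f = \lambda\int_0^\infty \eup^{-\lambda t}\,\Pcal_t f\,dt = \lambda\int_0^\infty \eup^{-\lambda t}\,f\,dt = f,
\end{gather*}
using Fubini/dominated convergence (legitimate since $f$ is bounded and $t\mapsto\Pcal_t f(x)$ is right-continuous, hence measurable).

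\smallskip\noindent\emph{Step 2: \eqref{prob-17-b} $\Rightarrow$ \eqref{prob-17-a}.} Assume $\lambda\Rcal_\lambda f = f$ for all $\lambda>0$. For $\phi\in C_c^\infty(\rn)$ I would test against $\phi$, move the resolvent onto the test function using the self-duality structure of convolution semigroups (the adjoint semigroup is the semigroup of $(-X_t)_{t\ge0}$, as recorded around \eqref{vor-e10}), and then use that $\Rcal_\lambda\phi$ lies in the domain of the generator with $\lambda\Rcal_\lambda\phi - \phi = \Lcal_\psi^* \Rcal_\lambda\phi$ (here $\Lcal_\psi^* = \Lcal_{\overline\psi}$). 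Concretely,
\begin{gather*}
    0 = \scalp{f}{\phi} - \scalp{\lambda\Rcal_\lambda f}{\phi} = \scalp{f}{\phi - \lambda\Rcal_\lambda^*\phi} = -\scalp{f}{\Lcal_{\overline\psi}\Rcal_\lambda^*\phi},
\end{gather*}
so $\scalp{\Lcal_\psi f}{\Rcal_\lambda^*\phi} = 0$ for all $\phi$ and all $\lambda$. Since $\lambda\Rcal_\lambda^*\phi \to \phi$ in the relevant topology (in $C_\infty$ or $L^1$) as $\lambda\to\infty$, and $\Lcal_\psi f$ is a distribution of order $\le 2$ by Lemma~\ref{gen-01} (so it is continuous against the $C^2$-convergence one gets after one more resolvent, or one mollifies as in Remark~\ref{gen-03}), one passes to the limit to obtain $\scalp{\Lcal_\psi f}{\phi}=0$. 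A cleaner variant is to mollify first ($f\rightsquigarrow j_\epsilon * f$, which still satisfies $\lambda\Rcal_\lambda(j_\epsilon*f) = j_\epsilon*f$ since $\Rcal_\lambda$ commutes with convolutions) so that all manipulations are with genuine functions, and then remove the mollifier at the end as in Remark~\ref{gen-03}.

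\smallskip\noindent\emph{Step 3: \eqref{prob-17-a} $\Rightarrow$ \eqref{prob-17-c}.} This is the main obstacle. Given $\Lcal_\psi f = 0$ weakly with $f\in C_b(\rn)$, I want $\Pcal_t f = f$. The strategy is again to smooth: fix $\lambda>0$ and consider $g:=\lambda\Rcal_\lambda f$. Since $\Rcal_\lambda f = \Rcal_\lambda(f)$ is obtained by convolving $f$ with the (smooth, rapidly integrable) resolvent density, $g$ is a nice function to which the generator applies classically, and the weak equation $\Lcal_\psi f = 0$ together with the resolvent identity $\Lcal_\psi\Rcal_\lambda = \lambda\Rcal_\lambda - \id$ on a suitable space gives $\Lcal_\psi g = \lambda\Rcal_\lambda(\Lcal_\psi f) = 0$ genuinely, whence $g$ lies in the domain and $\lambda\Rcal_\lambda g = g$, i.e.\ $\Pcal_t g = g$ for all $t$ by the standard semigroup argument ($\frac{d}{dt}\Pcal_t g = \Pcal_t\Lcal_\psi g = 0$). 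It remains to pass from $g=\lambda\Rcal_\lambda f$ back to $f$ by letting $\lambda\to\infty$: since $f\in C_b$ is continuous, $\lambda\Rcal_\lambda f(x) = \Ee f(X_{\tau/\lambda}+x)$-type averages converge pointwise to $f(x)$ (using right-continuity of paths and dominated convergence), so $\Pcal_t g \to \Pcal_t f$ pointwise while $g\to f$ pointwise, giving $\Pcal_t f = f$. The care needed here is exactly the passage $\Lcal_\psi f = 0$ weakly $\Rightarrow \Lcal_\psi(\Rcal_\lambda f) = 0$ classically, which should be justified either via the mollification in Remark~\ref{gen-03} (so $\Lcal_\psi(j_\epsilon*f)=0$ classically for a $C^\infty_b$ function, then take the resolvent of that) or by duality as in Step 2; either way it is a routine but not entirely trivial manipulation of distributions against the resolvent kernel, and getting the function spaces to match ($L^\infty$ vs.\ $C_b$ vs.\ $C_\infty$, where the Feller semigroup genuinely acts) is where one must be slightly careful.
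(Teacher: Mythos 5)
Your cycle c)\,$\Rightarrow$\,b)\,$\Rightarrow$\,a) is sound in substance (though in Step 2 the passage to the limit should not be argued via continuity of the order-$\leq 2$ distribution $\Lcal_\psi f$: the functions $\lambda\Rcal_\lambda^*\phi$ are not compactly supported, so the correct route is to write $\Lcal_{\overline\psi}\Rcal_\lambda^*\phi=\Rcal_\lambda^*\Lcal_{\overline\psi}\phi$ and use $\lambda\Rcal_\lambda^*\to\id$ strongly on $L^1$ against $f\in L^\infty$, or to mollify as you suggest). The genuine gap is in Step 3, the implication a)\,$\Rightarrow$\,c), which is the heart of the lemma. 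First, the supporting claim that $\Rcal_\lambda f$ is the convolution of $f$ with a smooth resolvent density is false for general L\'evy processes: for a compound Poisson process the resolvent measure $\int_0^\infty \eup^{-\lambda t}\mu_t\,dt$ has an atom at $0$ and no density at all, so you cannot assert that the generator ``applies classically'' to $g=\lambda\Rcal_\lambda f$. Second, in your fallback route (mollify, so $g=j_\epsilon*f\in C_b^\infty(\rn)$ and $\Lcal_\psi g=0$ pointwise) the decisive step ``whence $g$ lies in the domain and $\tfrac{d}{dt}\Pcal_t g=\Pcal_t\Lcal_\psi g=0$'' is not available: $g$ is bounded but lies neither in $C_\infty(\rn)$ nor in any $L^p(\rn)$, hence in the domain of none of the strongly continuous realizations of $\Lcal_\psi$, and the Hille--Yosida calculus you invoke does not apply to it. What is actually needed is Dynkin's formula $\Pcal_t g-g=\int_0^t\Pcal_s\Lcal_\psi g\,ds$ for bounded $C^2$ functions with bounded derivatives (an ``extended generator'' statement, provable via It\^{o}'s formula or an approximation argument); this is precisely the nontrivial content of the implication, and you leave it unproved and uncited, labelling it ``routine but not entirely trivial''.

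For comparison, the paper never places $f$ (or its mollification) in any operator domain. It proves a)\,$\Rightarrow$\,b) by duality: $\scalp{\lambda\Rcal_\lambda f-f}{\phi}=\scalp{f}{\Lcal_{\overline\psi}\Rcal_\lambda^*\phi}$, where $\Rcal_\lambda^*\phi\in D(\Lcal_{\overline\psi})$ in $L^1(\rn)$, and since $C_c^\infty(\rn)$ is an operator core for the $L^1$-generator $\Lcal_{\overline\psi}$ one can approximate $\Rcal_\lambda^*\phi$ in graph norm by test functions $u_n$ and use the hypothesis $\scalp{f}{\Lcal_{\overline\psi}u_n}=0$; then b)\,$\Rightarrow$\,c) is uniqueness of the Laplace transform, and c)\,$\Rightarrow$\,a) follows from the $L^1$-strong continuity of $(\Pcal_t^*)$ applied to $\phi\in\Scal(\rn)\subset D(\Lcal_{\overline\psi})$. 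If you wish to keep your architecture, either dualize Step 3 in the same way (the core argument is exactly the tool that lets the weak equation, tested only against $C_c^\infty$, be evaluated at $\Rcal_\lambda^*\phi$), or isolate and prove the Dynkin/extended-generator formula for $C_b^2$ functions as a separate lemma before using it.
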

\begin{proof}
    Since $f$ is bounded, we may assume that $f$ is positive, otherwise we could consider $f+\|f\|_\infty$. We are going to show the implications \ref{prob-17-a}$\Rightarrow$\ref{prob-17-b}$\Rightarrow$\ref{prob-17-c}$\Rightarrow$\ref{prob-17-a}.

    Assume that $\Lcal_\psi f=0$ weakly for a function $f\in C_b(\rn)$. Since both $\Lcal_\psi$ and $\Lcal^*_\psi = \Lcal_{\overline\psi}$ generate strongly continuous contraction semigroups in $L^1(\rn)$, see \cite[Proposition 12.7]{ber-for}, we know from the Hille--Yosida theorem that $\Rcal^*_\lambda$ maps $L^1(\rn)$ into $D(\Lcal_\psi^*)$. Using the fact that the test functions $C_c^\infty(\rn)$ are an operator core
    for the $L^1$-generator $(\Lcal_\psi^*,D(\Lcal_\psi^*)) = (\Lcal_{\overline\psi},D(\Lcal_{\overline\psi}))$, we can find for every $\phi\in\Scal(\rn)$ a sequence $(u_n)_{n\in\nat}\subset C_c^\infty(\rn)$ such that both $u_n \to \Rcal^*_\lambda\phi$ and $\Lcal_\psi^* u_n \to \Lcal_\psi^* \Rcal^*_\lambda\phi$ in $L^1$. Therefore,
    \begin{gather*}
        \scalp{\Rcal_\lambda \Lcal_\psi f}{\phi}
        =\scalp{\Lcal_\psi f}{\Rcal_\lambda^* \phi}
        =\lim_{n\to\infty}\scalp{\Lcal_\psi f}{u_n}.
    \end{gather*}
    By assumption, the right-hand side is zero, i.e.\ it holds that $\lambda\Rcal_\lambda f -f= 0$ for all $\lambda>0$, and \ref{prob-17-b} follows.

    Since $\lambda \Rcal_\lambda f(x) = \int_0^\infty \lambda \eup^{-\lambda t} (\Pcal_t f)(x)\,dt$ we get for fixed $x$ that
    \begin{gather*}
        \lambda\Rcal_\lambda f(x) = f(x)
        \iff
        \int_0^\infty \eup^{-\lambda t} (\Pcal_t f)(x)\,dt = \frac{f(x)}{\lambda} = \int_0^\infty \eup^{-\lambda t} f(x)\,dt.
    \end{gather*}
 Because of the uniqueness of the Laplace transform (for positive measures), we infer that $\Pcal_t f(x) = f(x)$; here we use the right-continuity of $t\mapsto\Pcal_t f(x)$.

    Since $\Pcal_t^*$ is a strongly continuous semigroup on $L^1(\rn)$ and $\Scal(\rn)\subset D(\Lcal_\psi^*)$, we see that \ref{prob-17-c} implies
    \begin{gather*}
        0 = \frac 1t\scalp{\Pcal_t f-f}{\phi}
        =\scalp{f}{t^{-1}(\Pcal_t^* \phi-\phi)}
        \xrightarrow[t\to 0]{}\scalp{f}{\Lcal_\psi^*\phi}
    \end{gather*}
    for any $\phi\in\Scal(\rn)$, and we get \ref{prob-17-a}.
\end{proof}

Finally, we will need the following characterization of multivariate lattice distributions. Recall that a probability measure $\mu$ is said to be a \textbf{lattice distribution} in $\rn$ if $\supp\mu$ is a lattice in $\rn$.
\begin{lemma}\label{prob-19}
    A probability measure $\mu$ on $\rn$ is a lattice distribution if, and only if, there are linearly independent vectors $\xi^1,\dots,\xi^n\in\rn$ such that the characteristic function satisfies $|\widecheck\mu(\xi^j)|=1$ for all $j=1,\dots,n$.
\end{lemma}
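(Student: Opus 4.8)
The plan is to reduce the lemma to one elementary rigidity fact about characteristic functions and then to do linear algebra with dual lattices. The fact: for a probability measure $\mu$ on $\rn$ and $\xi\in\rn$ one has $|\widecheck\mu(\xi)|=1$ if, and only if, $\supp\mu\subseteq H_\xi:=\{x\in\rn\mid \xi\cdot x\in\theta+2\pi\integer\}$ for a suitable $\theta=\theta(\xi)\in\real$; equivalently, $\mu$ is concentrated on a family of equally spaced parallel hyperplanes with common normal $\xi$. To prove it, write $\widecheck\mu(\xi)=\eup^{\iup\theta}$; then $1=\Re\big(\eup^{-\iup\theta}\widecheck\mu(\xi)\big)=\int_\rn\cos(\xi\cdot x-\theta)\,\mu(dx)$, and since the integrand is $\leq 1$ we must have $\cos(\xi\cdot x-\theta)=1$ for $\mu$-a.e.\ $x$, i.e.\ $\mu(H_\xi)=1$; as $H_\xi$ is closed, this means $\supp\mu\subseteq H_\xi$. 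The converse is immediate. Note that $H_\xi$ is a coset of the closed subgroup $\{x\mid\xi\cdot x\in 2\pi\integer\}$.

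For the forward implication, suppose $\mu$ is a lattice distribution, so that $\supp\mu$ lies in a coset $a+L$ of a lattice; after enlarging $L$ if necessary we may assume $L=A\integer^n$ with $A\in\real^{n\times n}$ invertible. Take $\xi^1,\dots,\xi^n$ to be a $\integer$-basis of the dual lattice $L^{\ast}=2\pi(A^{\top})^{-1}\integer^n=\{\xi\mid\xi\cdot v\in 2\pi\integer\ \text{for all}\ v\in L\}$; these vectors are linearly independent because $L^{\ast}$ has full rank. For $x=a+Av$ with $v\in\integer^n$ one computes $\xi^j\cdot x=\xi^j\cdot a+2\pi v_j\in\xi^j\cdot a+2\pi\integer$, so $\eup^{\iup\xi^j\cdot x}=\eup^{\iup\xi^j\cdot a}$ for every $x\in\supp\mu$; hence $\widecheck\mu(\xi^j)=\eup^{\iup\xi^j\cdot a}$ and $|\widecheck\mu(\xi^j)|=1$ for each $j$.

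For the converse, let $\xi^1,\dots,\xi^n$ be linearly independent with $|\widecheck\mu(\xi^j)|=1$. By the observation of the first paragraph there are $\theta_1,\dots,\theta_n\in\real$ with $\supp\mu\subseteq H_{\xi^j}=\{x\mid\xi^j\cdot x\in\theta_j+2\pi\integer\}$ for all $j$. Writing $\Xi\in\real^{n\times n}$ for the invertible matrix whose $j$-th row is $\xi^j$ and setting $\theta=(\theta_1,\dots,\theta_n)$, the intersection of the sets $H_{\xi^j}$ is $\{x\mid\Xi x\in\theta+2\pi\integer^n\}=\Xi^{-1}\theta+2\pi\,\Xi^{-1}\integer^n$, a coset of the full-rank lattice $2\pi\,\Xi^{-1}\integer^n$; hence $\supp\mu$ lies in that coset and $\mu$ is a lattice distribution.

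There is no deep step here: the argument runs on the rigidity of the cosine at its maximum plus routine dual-lattice bookkeeping. The two points that deserve care are that the passage from ``$\mu$-a.e.'' to ``$\supp\mu\subseteq$'' is licensed only because each $H_\xi$ is closed, and that the hypothesis $|\widecheck\mu(\xi^j)|=1$ (rather than $\widecheck\mu(\xi^j)=1$) localizes $\supp\mu$ merely to a \emph{translate} of a lattice, not to a lattice through the origin; the translate collapses to an honest subgroup precisely when all the phases $\theta_j$ vanish modulo $2\pi$, which is why ``$\mu$ is a lattice distribution'' is to be read as ``$\supp\mu$ is contained in a coset of a full-rank lattice''.
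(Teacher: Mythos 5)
Your proof is correct and follows essentially the same route as the paper's: the converse is the same cosine-rigidity argument ($\int(1-\cos(\xi^j\cdot x-\theta_j))\,\mu(dx)=0$ forces $\supp\mu$ into shifted hyperplane families whose intersection is a coset of a full-rank lattice), and the forward direction is the same dual-lattice construction, which the paper merely streamlines by first orthonormalizing the lattice basis and taking $\xi^j=2\pi|\theta^j|^{-2}\theta^j$. Your explicit remarks on the closedness of $H_\xi$ and on reading ``lattice distribution'' as support in a \emph{coset} of a full-rank lattice are welcome points of care, but they do not change the method.
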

\begin{proof}
    In one dimension this is a classic result, see e.g.\ Lukacs \cite[Theorem 2.1.4., pp.\ 17--18]{lukacs70}.

    If $n>1$ we can mimic the one-dimensional proof. Without loss of generality, we may assume that $\supp\mu$ is a proper lattice, i.e.\ it is of the form $\theta^0 + \sum_{j=1}^n l_j\theta^j$ with $l=(l_1,\dots,l_n)\in\integer^n$, $\theta^0\in\rn$, and a basis $\theta^1,\dots,\theta^n$ of $\rn$. Up to a linear transformation, we may even assume that the basis is orthonormal.

    If $\mu$ is a (proper) lattice distribution, it is a discrete distribution of the form $\mu = \sum_{l\in\integer^n} p_l \delta_{\Theta_l}$ with $\Theta_l = \theta^0 + \sum_{j=1}^n l_j\theta^j$, $p_l\geq 0$ and $\sum_{l\in\integer^n}p_l=1$. Thus, the characteristic function is given by
    \begin{gather*}
        \widecheck\mu(\xi)
        = \sum_{l\in\integer^d} p_l \exp\left[\iup \xi\cdot\Theta_l\right]
        = \exp\left[\iup \xi\cdot\theta^0\right]\sum_{l\in\integer^d} p_l \exp\left[\iup l_1\xi\cdot\theta^1+\dots + \iup l_n\xi\cdot\theta^n\right]
    \end{gather*}
    and the claim follows upon taking $\xi^j = 2\pi|\theta^j|^{-2}\theta^j$, $j=1,\dots,n$.

    Conversely, assume that $|\widecheck\mu(\xi^j)|=1$, that is $\widecheck\mu(\xi^j)=\eup^{\iup \alpha_j}$ for suitable $\alpha_j\in\real$. Then
    \begin{gather*}
        1
        = \int_{\rn} \exp\left[\iup (\xi^j\cdot x - \alpha_j)\right]\mu(dx)
        = \int_{\rn} \cos\left(\xi^j\cdot x - \alpha_j\right)\mu(dx)
    \end{gather*}
    and, since $\int_\rn 1\,d\mu = 1$, we infer that
    \begin{gather*}
        \int_{\rn} \left(1-\cos(\xi^j\cdot x - \alpha_j)\right)\mu(dx)=0.
    \end{gather*}
    Since the integrand is positive, we see that $\xi^j\cdot x - \alpha_j\in 2\pi\integer$ showing that $\supp\mu$ is contained in the lattice spanned by $(\xi^1,\dots,\xi^n)$ and shifted by $\alpha=(\alpha_1,\dots,\alpha_n)$.
\end{proof}

We are now ready for the proof of
\begin{theorem}[Liouville]\label{prob-21}
    Let $\Lcal_\psi$ be the generator of a L\'evy process with characteristic exponent $\psi$. The operator $\Lcal_\psi$ has the Liouville property \eqref{vor-e14} if, and only if, the zero-set of the characteristic exponent satisfies $\{\psi=0\}=\{0\}$.
\end{theorem}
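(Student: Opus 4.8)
The plan is to reduce the Liouville property for $\Lcal_\psi$ on $L^\infty$ to a statement about the convolution semigroup $(\Pcal_t)_{t\geq 0}$ and then to invoke the Choquet--Deny theorem (Theorem~\ref{prob-15}), together with the description of lattice distributions in Lemma~\ref{prob-19} and the structure of the zero-set in Lemma~\ref{gen-05}. By Remark~\ref{gen-03} we may assume that $f$ is smooth (mollify; since $\Lcal_\psi$ commutes with convolution, $j_\epsilon * f$ still solves the equation, is still bounded, and it suffices to prove it is constant), so in particular $f\in C_b(\rn)$ and Lemma~\ref{prob-17} applies.

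First I would prove the "if" direction. Assume $\{\psi=0\}=\{0\}$ and let $f\in L^\infty(\rn)$ with $\Lcal_\psi f=0$ weakly. Mollifying, assume $f\in C_b(\rn)$; by Lemma~\ref{prob-17}, $\Pcal_t f = f$ for all $t>0$, i.e.\ $f = f * \widetilde\mu_t$ for every $t$. By Theorem~\ref{prob-15} (Choquet--Deny), every point of $\supp\widetilde\mu_t$, equivalently of $\supp\mu_t$, is a period of $f$. So it remains to show that for some (or all) $t>0$ the group generated by $\supp\mu_t$ is dense in $\rn$ — then continuity of $f$ forces $f\equiv\text{const}$. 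Suppose not: the closed group $H_t$ generated by $\supp\mu_t$ is a proper closed subgroup of $\rn$, hence contained in a closed hyperplane plus a relative lattice; in any case there is a nonzero $\xi\in\rn$ with $\eup^{\iup\xi\cdot x}=1$ for all $x\in\supp\mu_t$, which gives $\widecheck\mu_t(\xi)=1$, i.e.\ $\eup^{-t\psi(\xi)}=1$ by \eqref{vor-e08}, so $\psi(\xi)\in\frac{2\pi\iup}{t}\integer$. Actually the cleaner route is: $H_t$ proper forces $|\widecheck\mu_t(\xi)|=1$ for some $\xi\ne 0$ (pick $\xi$ annihilating the hyperplane containing $\supp\mu_t$, or use that a proper closed subgroup has nontrivial annihilator), hence $\eup^{-t\Re\psi(\xi)}=1$, so $\Re\psi(\xi)=0$; since $\Re\psi\geq 0$ and $\Re\psi$ controls $\psi$ via $|\psi(\xi)|^2 \le C\,\Re\psi(\xi)$ for symbols (or directly: $\Re\psi(\xi)=0$ together with the L\'evy--Khintchine representation forces $Q\xi\cdot\xi = 0$ and $\int(1-\cos\xi\cdot y)\,\nu(dy)=0$, and then the imaginary drift/compensator terms must also vanish along $\real\xi$, giving $\psi(\xi)=0$). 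Hence $\xi\in\{\psi=0\}\setminus\{0\}$, contradicting the hypothesis. Therefore $H_t$ is dense and $f$ is constant.

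Next the "only if" direction. Suppose $\{\psi=0\}\ne\{0\}$, say $0\ne\eta\in\{\psi=0\}$. By Lemma~\ref{gen-05}, $\psi$ is periodic with period $\eta$, and the zero-set is a nontrivial closed subgroup. I want to exhibit a bounded non-constant weak solution. The natural candidate is $f(x) := \eup^{\iup\eta\cdot x}$ (or its real/imaginary parts, to stay real-valued as the theorem's dual pairing is real). Then formally $\Lcal_\psi f = -\psi(\eta) f = 0$; to make this rigorous one computes, for $\phi\in C_c^\infty(\rn)$, $\scalp{\Lcal_\psi f}{\phi} = \scalp{f}{\Lcal_{\overline\psi}\phi} = \int \eup^{\iup\eta\cdot x}\,\Lcal_{\overline\psi}\phi(x)\,dx$, and using the pseudo-differential representation \eqref{vor-e04} together with Plancherel (the multiplier $\overline\psi$ need not be smooth, but $\eup^{\iup\eta\cdot\,\cdot}$ is handled by the translation/modulation properties of the Fourier transform and dominated convergence with the cut-off L\'evy measures $\nu_k$), one gets $\scalp{\Lcal_\psi f}{\phi} = -\overline{\psi(-\eta)}\,\widehat\phi$-type expression evaluated so as to produce the factor $\overline{\psi(\eta)} = \overline{0} = 0$. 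Concretely: $\int \eup^{\iup\eta\cdot x}\Lcal_{\overline\psi}\phi(x)\,dx = \widehat{\Lcal_{\overline\psi}\phi}(-\eta)\cdot(2\pi)^n = -\overline{\psi(-\eta)}\,\widehat\phi(-\eta)(2\pi)^n = -\psi(\eta)\,(\text{stuff}) = 0$. Taking real and imaginary parts of $\eup^{\iup\eta\cdot x}$ yields bounded, real-valued, non-constant functions annihilated by $\Lcal_\psi$ in $\Dcal'(\rn)$, so the Liouville property fails.

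The main obstacle is the step in the "if" direction connecting $|\widecheck\mu_t(\xi)|=1$ (no smoothness of $\psi$ available here, unlike in Theorem~\ref{ana-10}) to $\psi(\xi)=0$: one must argue directly from the L\'evy--Khintchine representation \eqref{vor-e06} that $\Re\psi(\xi)=0$ forces the full symbol to vanish along $\real\xi$ — i.e.\ that $\xi\mapsto|\widecheck\mu_t(\xi)|=1$ on a set generating $\rn$ (via Lemma~\ref{prob-19}) cannot happen unless $\{\psi=0\}$ is already nontrivial. Equivalently, and perhaps more cleanly: if $f$ is non-constant then some proper closed subgroup $H$ contains $\supp\mu_t$ for all $t$; choosing a basis-type collection of annihilating frequencies via Lemma~\ref{prob-19} applied to (a lattice approximant of) $\mu_t$ shows $\widecheck{\mu_t}$ has modulus one on $n$ independent vectors only in the degenerate case, and in general one gets at least one nonzero frequency $\xi$ with $\eup^{-t\psi(\xi)}=1$ for all $t>0$, forcing $\psi(\xi)=0$. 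Handling the possibly non-discrete part of $\{\psi=0\}$ (the subspace $E$ in Lemma~\ref{gen-05}) is routine: it only makes the conclusion "$f$ constant" harder to fail, and one reduces to the discrete case exactly as in the proof of Theorem~\ref{ana-10}.
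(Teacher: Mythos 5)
Your forward (``if'') direction is, in substance, the paper's own argument: mollify, use Lemma~\ref{prob-17} to pass from the weak equation to $\Pcal_t f=f$, apply Choquet--Deny so that $\supp\mu_t\subset\Per(f)$ for every $t>0$, note that $\Per(f)$ is a \emph{proper} closed subgroup of $\rn$ when $f$ is continuous and non-constant, pick one nonzero $\xi$ annihilating this group, and conclude from the exact identity $\widecheck\mu_t(\xi)=1$, i.e.\ $\eup^{-t\psi(\xi)}=1$ \emph{for all} $t>0$, that $\psi(\xi)=0$. This last step, which you state correctly in your final paragraph, is precisely how the paper closes the argument (it normalizes $\Per(f)=\real^r\times\integer^{n-r}$ by the structure theorem and uses the frequencies $2\pi e_j$). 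You should, however, delete the ``cleaner route'' through $|\widecheck\mu_t(\xi)|=1$: the claims $|\psi(\xi)|^2\leq C\,\Re\psi(\xi)$ and ``$\Re\psi(\xi)=0$ forces the drift/compensator part to vanish along $\real\xi$'' are false --- a pure drift $\psi(\xi)=-\iup b\cdot\xi$ has $\Re\psi\equiv 0$ but $\psi(\xi)\neq 0$ off the hyperplane $b^\perp$. What actually saves the argument is that you have the \emph{exact} equality $\widecheck\mu_t(\xi)=1$ (not just modulus one) with the same $\xi$ for all $t>0$; a single $t$ would only give $\psi(\xi)\in\frac{2\pi\iup}{t}\integer$, so the quantifier over $t$ is essential and your correct version uses it.

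Your converse is a genuinely different, and more direct, verification than the paper's. You check in Fourier variables that $f(x)=\eup^{\iup\eta\cdot x}$ with $\eta\in\{\psi=0\}\setminus\{0\}$ is a weak solution: since $\Lcal_{\overline\psi}\phi\in L^1(\rn)$ (Lemma~\ref{gen-01}) and \eqref{vor-e04} holds for $\phi\in C_c^\infty(\rn)$, one has $\int\eup^{\iup\eta\cdot x}\Lcal_{\overline\psi}\phi(x)\,dx=(2\pi)^n\widehat{\Lcal_{\overline\psi}\phi}(-\eta)=-(2\pi)^n\overline{\psi(-\eta)}\,\widehat\phi(-\eta)=-(2\pi)^n\psi(\eta)\,\widehat\phi(-\eta)=0$, with no smoothness of $\psi$ and no cut-off of $\nu$ needed (your remarks about $\nu_k$ and ``stuff'' can be dropped); real and imaginary parts give bounded real-valued non-constant solutions because $\Lcal_{\overline\psi}$ maps real test functions to real functions. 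The paper instead projects onto the first coordinate, observes that the projected law is a lattice distribution, checks $\Pcal_t\sin(2\pi x_1)=\sin(2\pi x_1)$ and returns to the weak formulation via Lemma~\ref{prob-17}; both routes work, and yours avoids Lemma~\ref{prob-19} at this point. With the false detour removed, your proposal is correct.
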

\begin{proof}
Let $f:\rn\to\rn$ be a bounded, non-constant weak solution of $\Lcal_\psi f=0$. The argument used in Remark~\ref{gen-03} allows us to assume that $f\in C^\infty_b(\rn)$. Thus, we can use Lemma~\ref{prob-17} to see that $\Lcal_\psi f=0$ is equivalent to $\Pcal_t f=f$ for all $t>0$.

As $f$ is continuous, we know that
\begin{align*}
    \Per(f):=\{x\in\rn \mid f(x+y)=f(y)\text{\ for all\ }y\in\rn\}
\end{align*}
is a closed subgroup of the additive group $(\rn,+)$. An application of the Choquet--Deny theorem, Theorem~\ref{prob-15}, reveals that
\begin{align}\label{prob-e22}
    \bigcup_{t>0}\supp(\mu_t)\subset \Per(f).
\end{align}
Since $\Per(f)$ is a closed additive subgroup of $\rn$, it is either discrete or of the form $G\oplus E$ for some subspace $E\subset\rn$ and a closed subgroup (a relative lattice) $G\subset E^\perp$, see \cite[Chapter VII, \S 1.2, p.~72]{bourbaki} or Lemma~\ref{gen-05}. By a linear isomorphism we may assume that $\Per(f)=\real^r\times \integer^{n-r}$. If $r>0$, everything is reduced to a lower-dimensional problem, as $f$ is constant on every affine subspace $\real^r\times \{z\}$ for each $z\in \real^{n-r}$---the new characteristic function is $\widecheck{\mu}_t((0,\dotso,0,\xi_1,\dotso,\xi_{n-r}))$. Therefore, we may assume that $\Per(f)=\integer^n$ which implies that $\mu_t$ is for every $t>0$ a non-trivial lattice distribution on $\integer^n$. From the proof of Lemma~\ref{prob-19} we know that  $\widecheck\mu_t(2\pi e_1) = \widecheck\mu_t(2\pi e_2) = \dots = \widecheck\mu_t(2\pi e_n)=1$, where $e_j=( \delta_{ij})_{i\in \{1,\dotso,n\}}$. Since $\widecheck\mu_t(\xi) = \eup^{-t\psi(\xi)}$ and since $t>0$ is arbitrary, this means that $2\pi e_j\in\{\psi=0\}$ for all $j=1,2,\dots,n$, and we conclude that $\{\psi=0\}\neq\{0\}$.

\bigskip
Conversely assume that $\{\psi=0\} \neq \{0\}$. Without loss of generality we may assume that $\eta=(2\pi,0\dots,0)\in \{\psi=0\}$. Denote by $\pi_1 : (\xi_1,\dots,\xi_n)\mapsto \xi_1$ the projection onto the first coordinate. Since the image measure $\mu_t^{(1)} := \mu_t\circ\pi_1^{-1}$ has the characteristic function $\widecheck\mu_t^{(1)}(\xi) = \widecheck\mu_t((\xi_1,0,\dots,0)) = \eup^{-t\psi(\xi_1,0,\dots,0)}$ for all $t>0$, the distribution $\mu_1$ is a lattice distribution and it is clear that $f(x):=\sin(2\pi x_1)$ is a solution of $\Pcal_t f=f$.
\end{proof}
Let us point out that the condition in Theorem \ref{prob-21} is equivalent to the fact that $(x,y)\mapsto\sqrt{|\psi(x-y)|}$ defines a distance on $\real^n$.
\section{Towards the strong Liouville property}\label{sec-gliou}

For the Laplace operator \eqref{vor-e02} remains valid if we assume $f\geq 0$ rather than $f\in L^\infty(\rn)$. This is the so-called \textbf{strong Liouville property}. In this section we want to discuss how we can relax the condition $f\in L^\infty(\rn)$ in the corresponding problem \eqref{vor-e14} for L\'evy generators.

The key to such a result is the following Choquet representation theorem which replaces Theorem~\ref{prob-15}, see \cite{deny60}. Here, and in the sequel, we understand that integrals of positive measurable functions always exist in $[0,+\infty]$.
\begin{theorem}[Deny]\label{gliou-22}
    Let $\mu$ be a probability measure on $\rn$ such that the smallest group containing the support of $\mu$ is $\rn$. All positive solutions of the convolution equation $h = h*\mu$ are of the form
    \begin{gather*}
        h(x) = \int_{E(\mu)} \eup^{-\xi\cdot x}\,\kappa(d\xi)
    \end{gather*}
    where $\kappa$ is a unique \textup{(}positive\textup{)} measure with support in the set of the $\mu$-harmonic exponentials $E(\mu) = \left\{\xi\in\rn \mid \int_\rn \eup^{\xi\cdot y}\,\mu(dy) =1\right\}$.
\end{theorem}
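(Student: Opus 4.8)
The plan is to realise $h$ as a Choquet barycentre of the harmonic exponentials $\eup^{-\xi\cdot x}$, $\xi\in E(\mu)$, by integral‑representing the positive solutions of $h=h*\mu$ over the extreme rays of the convex cone they form. I work inside the cone $\mathcal{H}^+$ of non‑negative locally integrable solutions of $h=h*\mu$, and first record two soft facts. The cone is translation invariant, since $g_a(x):=h(x+a)$ again solves $g_a=g_a*\mu$; and, iterating to $h=h*\mu^{*n}$ and invoking the hypothesis that the smallest group containing $\supp\mu$ is $\rn$, any non‑trivial $h\in\mathcal{H}^+$ is strictly positive everywhere and satisfies a two‑sided local Harnack inequality, $c(a)\,h(x)\le h(x+a)\le C(a)\,h(x)$ with $0<c(a)\le C(a)<\infty$. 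Mollifying as in Remark~\ref{gen-03} keeps us inside $\mathcal{H}^+$, so we may take $h$ continuous; normalising by $h(0)=1$ produces a convex base $B$ of $\mathcal{H}^+$ which, by Harnack and Arzel\`a--Ascoli, is metrizable and relatively compact for local uniform convergence. Because a local uniform limit of $\mu$‑harmonic functions is in general only $\mu$‑\emph{excessive} ($h\ge h*\mu$), the object one actually feeds into Choquet's theorem is the enveloping cone $\mathcal{S}^+$ of positive $\mu$‑excessive functions, whose normalised base \emph{is} compact; the harmonic functions are recovered inside $\mathcal{S}^+$ through the Riesz decomposition, excessive $=$ potential $+$ harmonic.

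Next I identify the extreme rays of $\mathcal{H}^+$ with the exponentials indexed by $E(\mu)$. Suppose $h$ spans an extreme ray, i.e.\ $0\le g\le h$ with $g,\,h-g\in\mathcal{H}^+$ forces $g$ proportional to $h$. Fixing $a$ and splitting $g_a=c(a)h+\bigl(g_a-c(a)h\bigr)$, the second summand lies in $\mathcal{H}^+$ and is dominated by $(C(a)-c(a))h$, so extremality (after the obvious rescaling) forces it to be a multiple of $h$; hence $h(x+a)=\lambda(a)h(x)$ for all $x,a$ with $\lambda(a)>0$. Setting $x=0$ shows $h$ is multiplicative, and a positive measurable multiplicative function on $(\rn,+)$ is an exponential, $h(x)=\eup^{-\xi\cdot x}$; substituting into $h=h*\mu$ then gives $\int\eup^{\xi\cdot y}\,\mu(dy)=1$, i.e.\ $\xi\in E(\mu)$. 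For the converse, fix $\xi\in E(\mu)$ and suppose $\eup^{-\xi\cdot x}=g_1(x)+g_2(x)$ with $g_i\in\mathcal{H}^+$; then $k_i(x):=g_i(x)\eup^{\xi\cdot x}$ are bounded (they sum to $1$) and solve $k_i=k_i*\mu_\xi$ for the probability measure $\mu_\xi(dy):=\eup^{\xi\cdot y}\,\mu(dy)$, which has the same support as $\mu$. Applying the bounded Choquet--Deny theorem, Theorem~\ref{prob-15}, to $j_\epsilon*k_i$ and using the group hypothesis shows $k_i$ is constant, so $g_i=c_i\,\eup^{-\xi\cdot x}$; hence $\eup^{-\xi\cdot x}$ spans an extreme ray. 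Consequently $\xi\mapsto(x\mapsto\eup^{-\xi\cdot x})$ maps $E(\mu)$ bijectively onto the normalised extreme points.

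It remains to assemble the representation and its uniqueness. Choquet's theorem on the compact metrizable base of $\mathcal{S}^+$ expresses the normalised $h$ as the barycentre of a probability measure carried by the extreme points; when $h\in\mathcal{H}^+$, the Riesz reduction confines this measure to the exponential extreme rays, and pushing it forward through the bijection above and rescaling by $h(0)$ yields a positive measure $\kappa$ on $E(\mu)$ with $h(x)=\int_{E(\mu)}\eup^{-\xi\cdot x}\,\kappa(d\xi)$. For uniqueness I would either observe that $\mathcal{S}^+$, and hence $\mathcal{H}^+$, is a lattice cone, so its base is a Choquet simplex and the representing measure on the extreme points is unique by the Choquet--Meyer theorem; or argue directly that two admissible measures $\kappa_1,\kappa_2$ have identical and everywhere finite bilateral Laplace transforms $x\mapsto\int\eup^{-\xi\cdot x}\kappa_i(d\xi)$ on $\rn$, whence $\kappa_1=\kappa_2$ by analytic continuation of these transforms into a complex tube domain together with the uniqueness of characteristic functions of the finite measures $\eup^{-\xi\cdot x_0}\kappa_i(d\xi)$.

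The step I expect to be genuinely delicate---the reason this is a theorem and not an exercise---is the very first one: producing a workable compact base, and the attendant Riesz/potential theory for $\mathcal{S}^+$, when $\mu$ is not spread out and $E(\mu)$ is possibly unbounded. If some convolution power $\mu^{*n}$ has a non‑trivial absolutely continuous component, the Harnack inequality, the continuity of solutions and the required compactness are all routine; in general one must instead construct the Martin boundary of the random walk with step distribution $\widetilde\mu$ by hand, which is precisely where the martingale‑convergence argument of Doob--Snell--Williamson quoted above enters (after disposing of the transient/recurrent dichotomy, the recurrent case being the degenerate one with $E(\mu)=\{0\}$). Everything downstream---the extreme‑ray computation and the uniqueness---is then comparatively routine.
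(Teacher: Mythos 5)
The paper does not prove this statement at all: Theorem~\ref{gliou-22} is imported from Deny \cite{deny60} as a black box, exactly as the bounded version, Theorem~\ref{prob-15}, is imported from Choquet--Deny \cite{cho-den60} (with the probabilistic proof in \cite{dsw60}). So there is no in-paper argument to compare with; the relevant benchmark is Deny's original proof, and your sketch does follow its general architecture: Choquet representation over the cone of positive solutions, identification of the extreme rays with the exponentials $\eup^{-\xi\cdot x}$, $\xi\in E(\mu)$, and uniqueness of the representing measure. Two of your ingredients are correct and well executed: the argument that an extreme ray must be multiplicative (splitting $g_a-c(a)h$ and using extremality), and the converse via the tilted probability measure $\mu_\xi(dy)=\eup^{\xi\cdot y}\,\mu(dy)$ together with the bounded Choquet--Deny theorem applied to $j_\epsilon*k_i$; the Laplace-transform argument for uniqueness is also a legitimate substitute for the Choquet--Meyer simplex route.

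The genuine gaps are precisely the steps you yourself flag, plus one you treat as ``soft'' but is not. First, the two-sided Harnack inequality $c(a)h(x)\le h(x+a)\le C(a)h(x)$, uniform in $x$, does not follow just from iterating $h=h*\mu^{*n}$: that identity bounds $h(x)$ from below by $\mu^{*n}$-weighted \emph{local averages} of $h$ near $x+a$, not by the pointwise value $h(x+a)$; turning this into a pointwise, uniform bound (even after mollification, where one must compare different mollification radii and chain through the group generated by $\supp\mu$) is a real argument, and everything downstream --- strict positivity, equicontinuity, the compact base, and the extremality computation itself --- rests on it. Second, the compactness/closedness problem is the heart of the theorem, and the proposed remedy (pass to the cone of excessive functions, Riesz decomposition, Martin boundary of the random walk with step law $\widetilde\mu$, transient/recurrent dichotomy) is a body of theory invoked by name rather than proved: you do not establish that the normalised excessive base is compact, that extreme excessive elements are either extreme harmonic elements or extreme potentials, that the representing measure of a harmonic barycentre puts no mass on potentials, or that the cone is a lattice (needed if you want the Choquet--Meyer uniqueness). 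As it stands the proposal is an accurate roadmap of Deny's proof with its decisive technical steps left open, so it cannot be counted as a proof of the statement.
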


A close inspection of our arguments in Section~\ref{sec-prob} reveals that we use boundedness of $f$ for essentially two purposes:
a) in order to make sense of the weak formulation $\scalp{\Lcal_\psi f}{\phi} = \scalp{f}{\Lcal_\psi^* \phi} = \int f(x)\Lcal_{\psi^*}\phi(x)\,dx$ (which requires that $f\in L^\infty$ as we only know that $\Lcal_\psi^* \phi\in L^1$)
and
b) to make sense of $\Pcal_t f(x) = \Ee f(X_t+x)$ (mainly in Lemma~\ref{prob-17} where we reduce the Liouville property to a property of the semigroup). Below we will provide arguments which allow us to get rid of these restrictions if $f$ satisfies a growth bound.  Our results complement previous work by K\"{u}hn \cite{kuehn20} who looks at not necessarily positive solutions $f$ which have polynomial growth.

\begin{definition}\label{gliou-23}
    A measurable function $g:\rn\to (0,\infty)$ is said to be \textbf{submultiplicative}, if there exists some constant $c>0$ such that
    \begin{align*}
        g(x+y)\leq cg(x)g(y)\quad\text{for all\ \ } x,y\in\rn.
    \end{align*}
\end{definition}
It is well-known that a locally bounded submultiplicative function is exponentially bounded, i.e.\ $g(x)\leq \alpha\exp(\beta|x|)$ for suitable constants $\alpha,\beta > 0$, see \cite[Lemma 25.5]{sato}. Moreover, submultiplicative functions appear naturally in connection with the existence of generalized moments of L\'evy processes. If $(X_t)_{t\geq 0}$ is a L\'evy process with L\'evy triplet $(b,Q,\nu)$ and $g$ a locally bounded submultiplicative function, then
\begin{gather}\label{gliou-e23}
    \text{ for some (or all) $t$\::}\:\:\Ee g(X_t)<\infty
    \quad\text{if, and only if,}\quad
    \int_{|y|\geq 1} g(y)\,\nu(dy)<\infty,
\end{gather}
cf.\ \cite[Theorem 25.3]{sato}. This moment condition allows us to extend $\scalp{\Lcal_\psi f}{\phi}$ to functions $f$ which are dominated by a locally bounded submultiplicative function.

\begin{lemma}\label{gliou-25}
    Let $\Lcal_\psi$ be the generator of a L\'evy process with characteristic exponent $\psi$ given by \eqref{vor-e06} and let $g:\rn\to (0,\infty)$ be a locally bounded submultiplicative function. Then we have
    \begin{align}\label{gliou-e24}
        \int_{|y|\geq 1}g(y)\,\nu(dy)<\infty
        \quad\text{if, and only if,}\quad
        \forall\phi\in C_c^\infty(\rn)\::\: \|g\Lcal_{\overline\psi}\phi\|_{L^1}<\infty.
    \end{align}
\end{lemma}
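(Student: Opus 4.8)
The plan is to establish the two implications separately, relying on the explicit integral representation \eqref{vor-e12} for $\Lcal_{\overline\psi}\phi$ together with the moment equivalence \eqref{gliou-e23} and the submultiplicativity of $g$. First I would fix $\phi\in C_c^\infty(\rn)$ with $\supp\phi\subset B_R(0)$ and write, using \eqref{vor-e12} for the symbol $\overline\psi$ (whose L\'evy measure is $\widetilde\nu(dy)=\nu(-dy)$),
\begin{gather*}
    \Lcal_{\overline\psi}\phi(x) = -b\cdot\nabla\phi(x) + \tfrac12\nabla\cdot Q\nabla\phi(x) + \int_{\rn\setminus\{0\}}\bigl(\phi(x-y)-\phi(x)+y\cdot\nabla\phi(x)\I_{(0,1)}(|y|)\bigr)\nu(dy).
\end{gather*}
The local (differential) part is compactly supported and smooth, hence contributes a finite amount to $\|g\Lcal_{\overline\psi}\phi\|_{L^1}$ as soon as $g$ is locally bounded; so the whole question reduces to the integral part. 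I would split the $y$-integral into $|y|<1$ and $|y|\geq 1$. On $|y|<1$ a second-order Taylor estimate gives $|\phi(x-y)-\phi(x)+y\cdot\nabla\phi(x)|\leq c\,|y|^2\,\I_{B_{R+1}(0)}(x)$, which is handled by $\int_{|y|<1}|y|^2\,\nu(dy)<\infty$ and local boundedness of $g$, uniformly in the hypothesis on $g$. Thus the crux is the far part
\begin{gather*}
    \Psi(x) := \int_{|y|\geq 1}\bigl(\phi(x-y)-\phi(x)\bigr)\nu(dy),
\end{gather*}
and the claim becomes: $\int_{|y|\geq 1}g(y)\,\nu(dy)<\infty$ for all (equivalently, some) $\phi$ iff $\|g\Psi\|_{L^1}<\infty$.

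For the direction ``$\Rightarrow$'': assume $\int_{|y|\geq 1}g(y)\,\nu(dy)<\infty$. Then $\int_\rn g(x)|\phi(x)|\,dx\cdot\nu(\{|y|\geq1\})<\infty$ takes care of the $\phi(x)$ term (note $\nu(\{|y|\geq1\})<\infty$), and for the $\phi(x-y)$ term I would use Tonelli and the submultiplicativity $g(x)\leq c\,g(x-y)g(y)$ after the substitution $x\mapsto x+y$:
\begin{gather*}
    \int_\rn g(x)\int_{|y|\geq1}|\phi(x-y)|\,\nu(dy)\,dx \leq c\int_{|y|\geq1}g(y)\Bigl(\int_\rn g(x-y)|\phi(x-y)|\,dx\Bigr)\nu(dy) = c\|g\phi\|_{L^1}\int_{|y|\geq1}g(y)\,\nu(dy),
\end{gather*}
which is finite since $g$ is locally bounded and $\phi$ has compact support. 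This gives $\|g\Psi\|_{L^1}<\infty$, hence $\|g\Lcal_{\overline\psi}\phi\|_{L^1}<\infty$ for every $\phi\in C_c^\infty(\rn)$.

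For the converse ``$\Leftarrow$'': it suffices to exhibit one $\phi\in C_c^\infty(\rn)$ for which $\|g\Lcal_{\overline\psi}\phi\|_{L^1}<\infty$ forces the moment condition. I would take $\phi\geq0$, $\phi\not\equiv0$, supported in the unit ball $B_1(0)$, so that for $|y|\geq 2$ the supports of $x\mapsto\phi(x-y)$ and $x\mapsto\phi(x)$ are disjoint; then for such $y$,
\begin{gather*}
    \int_\rn g(x)\,\phi(x-y)\,dx = \int_\rn g(x+y)\,\phi(x)\,dx \geq \frac{1}{c\sup_{B_1}g^{-1}}\,g(y)\int_\rn \frac{\phi(x)}{g(-x)}\,dx \cdot(\text{const}),
\end{gather*}
using $g(y)\leq c\,g(x+y)g(-x)$, i.e. $g(x+y)\geq c^{-1}g(y)/g(-x)$, and $g$ bounded away from $0$ on $B_1$ is not automatic — but $g>0$ and locally bounded, and one can instead lower-bound $\int g(x+y)\phi(x)\,dx$ by $c^{-1}g(y)\int \phi(x)g(-x)^{-1}dx$ with the last integral a fixed positive constant since $g^{-1}$ is locally bounded below away from $0$... more carefully, $g(-x)\leq c\,g(x)g(0)^{-1}\cdot(\cdots)$ shows $g(-x)$ is locally bounded, so $\int\phi/g(-x)>0$. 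Combining with the disjointness of supports, $g(x)|\Lcal_{\overline\psi}\phi(x)|\geq g(x)\int_{|y|\geq2}\phi(x-y)\,\nu(dy)$ minus the manifestly $g$-integrable pieces (the local part, the $|y|<1$ part, the $1\leq|y|<2$ part which is finite since $\nu(\{1\leq|y|<2\})<\infty$ and $g$ is locally... no, $x-y$ ranges over a bounded set only if $x$ is bounded — here one uses that $\phi(x-y)$ is supported in $x\in y+B_1$, so $g(x)\leq c\,g(y)g(x-y)$ again bounds it by $c\,g(y)\sup_{B_1}g\cdot\|\phi\|_\infty$, integrated over the bounded $y$-range). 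Integrating in $x$ and applying Tonelli gives $\|g\Lcal_{\overline\psi}\phi\|_{L^1}\geq \text{(const)}\int_{|y|\geq2}g(y)\,\nu(dy) - \text{(finite)}$, whence $\int_{|y|\geq2}g(y)\,\nu(dy)<\infty$; adding back the finite contribution of $\{1\leq|y|<2\}$ yields $\int_{|y|\geq1}g(y)\,\nu(dy)<\infty$.

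The main obstacle is the lower bound in the converse direction: one must choose $\phi$ so that the ``$-\phi(x)$'' and truncation terms in $\Lcal_{\overline\psi}\phi$ cannot cancel the genuinely growing contribution of $\int\phi(x-y)\,\nu(dy)$, and one must control, via submultiplicativity and local boundedness of $g$, the exchange of $g(x)$ for $g(y)$ after the translation $x\mapsto x+y$ — being careful that $g$ is only assumed positive and locally bounded (so $1/g$ and $g(-\cdot)$ are locally bounded as well, which is what makes the fixed integrals $\int\phi/g(-\cdot)$ and $\int g\phi$ finite and positive). Everything else is a routine Tonelli-and-Taylor bookkeeping, and the equivalence ``for some / for all $\phi$'' drops out because the ``$\Rightarrow$'' direction produces finiteness for every $\phi$ while the ``$\Leftarrow$'' direction only needs one well-chosen $\phi$.
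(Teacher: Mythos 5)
Your proposal is correct and follows essentially the same route as the paper: reduce to the big-jump part $\nu|_{\{|y|\geq 1\}}$ (the drift/Gaussian/small-jump contribution is compactly supported, hence harmless since $g$ is locally bounded), prove sufficiency by Tonelli together with $g(x)\leq c\,g(x-y)g(y)$, and prove necessity with a fixed bump function and the reverse submultiplicative bound $g(y)\leq c\,g(x)g(y-x)$ after translating. The only cosmetic difference is in the converse: the paper restricts the $x$-integration to $|x|\geq 2$, where the integrand is nonnegative so no cancellation can occur, whereas you subtract the manifestly $g$-integrable remainder terms; both devices work.
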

\begin{proof}
    We split the exponent $\psi$ into two parts,
    \begin{gather*}
        \psi_1(\xi) := \int_{|y|\geq 1} \left(1-\eup^{\iup \xi\cdot y}\right)\nu(dy)
        \quad\text{and}\quad
        \psi_2(\xi) := \psi(\xi)-\psi_1(\xi).
    \end{gather*}
    Both $\psi_1$ and $\psi_2$ are again characteristic exponents of L\'evy processes whose triplets are $(0,0,\nu_1)$, $\nu_1(dy) := \I_{[1,\infty)}(|y|)\nu(dy)$ and $(b,Q,\nu_2)$, $\nu_2(dy) := \I_{(0,1)}(|y|)\nu(dy)$, respectively.

    Since the L\'evy measure appearing in the definition of $\Lcal_{\overline\psi_2}$ is supported in $\overline{B_1(0)}$, it is easy to see from  the integro-differential representation \eqref{vor-e12} of the generator that $\supp\big(\Lcal_{\overline\psi_2}\phi\big) \subset \supp\phi+ \overline{B_1(0)}$, i.e.\ $\Lcal_{\overline\psi_1}$ maps $C_c^\infty(\rn)$ into itself. In particular, $g\Lcal_{\overline\psi}\phi\in L^1(dx)$ for every $\phi\in C_c^\infty(\rn)$, which means that we can assume that $\Lcal_{\overline\psi}=\Lcal_{\overline\psi_1}$.

    Notice that
    \begin{align*}
         |g(x)\Lcal_{\overline\psi_1}\phi(x)|
        &= \bigg|\int \left[\phi(x-y)g(x-y)-\phi(x)g(x)\right] \nu_1(dy) \\
        &\quad\mbox{}+ \int \phi(x-y) \left[g(x)-g(x-y)\right] \nu_1(dy)\bigg|\\
	&\leq \int|\phi(x-y)g(x-y)|\nu_1(dy)+ |g(x)\phi(x)|\nu_1(\rn)\\
	&\quad\mbox{}+ \int |\phi(x-y)|\ |g(x)-g(x-y)| \nu_1(dy).
    \end{align*}
    By submultiplicativity  and the fact that $g$ is positive, we get
    \begin{align*}
        |g(x-y)-g(x)|
        &= |g(x-y)-g(x-y+y)|\\
        &\leq    g(x-y) + g((x-y)+y)
        \leq g(x-y)(1+c g(y)),
    \end{align*}
    and so, since $\nu_1(\rn) = \nu(|y|\geq 1) = \|\nu_1\|<\infty$,
    \begin{align*}
        \|g\Lcal_{\overline\psi_1}\phi\|_{L^1}
        &\leq 2\|\phi g\|_{L^1} \cdot \|\nu_1\| + \iint |\phi(x-y)g(x-y)| \left(1+c g(y)\right) \nu_1(dy)\,dx\\
        &= 3\|\phi g\|_{L^1} \cdot \|\nu_1\| + c\|\phi g\|_{L^1} \int g(y)\, \nu_1(dy).
    \end{align*}
    This proves sufficiency in \eqref{gliou-e24}.

    In order to get necessity, observe that the function $1+g$ is again submultiplicative. Since $\|g\mathcal{L}_{\overline\psi}\phi\|_{L^1}$ and $\|(1+g)\mathcal{L}_{\overline\psi}\phi\|_{L^1}$ are at the same time finite or infinite, we can assume that $g(x)\ge 1$ for all $x\in\rn$. Pick $\phi\in C_c^\infty(\rn)$ such that $\I_{B_1(0)}\leq\phi\leq \I_{B_2(0)}$. As in the first part of the proof we can assume that $\Lcal_{\overline\psi}=\Lcal_{\overline\psi_1}$. We have
    \begin{align*}
        \int |g(x)\Lcal_{\overline\psi_1}\phi(x)|\,dx
        &\geq \int_{|x|\geq 2} \left|g(x) \int_{|y|\geq 1} \left[\phi(x-y)-\phi(x)\right] \nu(dy)\right| dx\\
        &= \int_{|x|\geq 2} \int_{|y|\geq 1} g(x)\phi(x-y)\, \nu(dy)\, dx.
    \intertext{Since $g$ is submultiplicative, we see that $c g(y-x) g(x)\geq g(y)$. Thus,}
        \int |g(x)\Lcal_{\overline\psi_1}\phi(x)|\,dx
        &\geq \frac 1c \int_{|y|\geq 1} \int_{|x|\geq 2} \frac{\phi(x-y)}{g( y-x)}\,dx \, g(y)\, \nu(dy)\\
        &\geq \frac 1c \int_{|y|\geq 3} \int_{|x-y|\leq 1} \frac{\phi(x-y)}{g( y-x)}\,dx \, g(y)\, \nu(dy)\\
        &= \frac 1c \int_{|z|\leq 1} \frac{dz}{g(z)}  \int_{|y|\geq 3} g(y)\, \nu(dy)
    \end{align*}
    finishing the proof.
\end{proof}

\begin{lemma}\label{gliou-27}
    Let $(X_t)_{t\in[0,\infty)}$ be a L\'evy process with characteristic function $\psi$ given by \eqref{vor-e06} and let $g:\rn\to (0,\infty)$ be a locally bounded submultiplicative function. For every constant $C\in (0,\infty)$, the family $(g(X_t))_{t\in [0,C]}$ is uniformly integrable if, and only if, $\int_{|y|\geq 1}g(y)\nu(dy)<\infty$.
\end{lemma}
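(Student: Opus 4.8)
The statement is an equivalence, and the implication ``$(g(X_t))_{t\in[0,C]}$ uniformly integrable $\Rightarrow\int_{|y|\ge1}g(y)\,\nu(dy)<\infty$'' is immediate: a uniformly integrable family is bounded in $L^1$, so $\Ee\,g(X_t)<\infty$ for some $t\in(0,C]$, and then \eqref{gliou-e23} gives $\int_{|y|\ge1}g(y)\,\nu(dy)<\infty$. The work is therefore entirely in the converse.

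For the converse I would not check uniform integrability directly but instead prove the stronger statement
\[
  \Ee\Big[\sup_{t\in[0,C]}g(X_t)\Big]<\infty .
\]
Once this is known, $(g(X_t))_{t\in[0,C]}$ is dominated by a single integrable random variable, so uniform integrability follows at once (if $0\le\xi_\alpha\le W$ with $\Ee\,W<\infty$, then $\{\xi_\alpha>R\}\subset\{W>R\}$, hence $\sup_\alpha\Ee[\xi_\alpha;\,\xi_\alpha>R]\le\Ee[W;\,W>R]\to 0$). To bound the supremum I would use the classical decomposition $X=Y+Z$ into two \emph{independent} L\'evy processes: $Z$ is the compound Poisson process collecting the jumps of modulus $\ge 1$, with L\'evy measure $\nu_1:=\nu|_{\{|y|\ge1\}}$ and intensity $\lambda:=\nu(\{|y|\ge1\})<\infty$, while $Y$ carries the triplet $(b,Q,\nu|_{\{0<|y|<1\}})$, i.e.\ the drift, the Gaussian part and the small jumps. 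Submultiplicativity gives $g(X_t)\le c\,g(Y_t)\,g(Z_t)$, hence $\sup_{t\le C}g(X_t)\le c\big(\sup_{t\le C}g(Y_t)\big)\big(\sup_{t\le C}g(Z_t)\big)$, and by independence $\Ee\big[\sup_{t\le C}g(X_t)\big]\le c\,\Ee\big[\sup_{t\le C}g(Y_t)\big]\,\Ee\big[\sup_{t\le C}g(Z_t)\big]$; it therefore suffices to control the two factors separately.

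For the part $Z$ (we may assume $\lambda>0$, since otherwise $Z\equiv0$) write $Z_t=W_1+\dots+W_{N_t}$, where $N_t$ is Poisson distributed with parameter $\lambda t$ and $W_1,W_2,\dots$ are i.i.d.\ with law $\lambda^{-1}\nu_1$, independent of $(N_t)_{t\ge0}$; the moment hypothesis is exactly $\gamma:=\Ee\,g(W_1)=\lambda^{-1}\int_{|y|\ge1}g(y)\,\nu(dy)<\infty$. On $[0,C]$ the path of $Z$ runs through the partial sums $W_1+\dots+W_k$, $0\le k\le N_C$, and iterating submultiplicativity (with $c\ge1$, without loss of generality) gives $g(W_1+\dots+W_k)\le c^{\,k-1}\prod_{i=1}^{k}g(W_i)\le c^{\,N_C-1}\prod_{i=1}^{N_C}\bigl(1+g(W_i)\bigr)$, so $\sup_{t\le C}g(Z_t)\le g(0)+c^{\,N_C-1}\prod_{i=1}^{N_C}\bigl(1+g(W_i)\bigr)$. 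Conditioning on $N_C$ and using independence of the $W_i$, the expectation of the right-hand side equals $g(0)+c^{-1}\eup^{-\lambda C}\sum_{n\ge0}\frac{\bigl(c(1+\gamma)\lambda C\bigr)^n}{n!}=g(0)+c^{-1}\eup^{(c(1+\gamma)-1)\lambda C}<\infty$. For the part $Y$, the L\'evy measure has compact support, so $Y$ has exponential moments of all orders: $\Ee\,\eup^{u\cdot Y_t}=\eup^{t\Psi(u)}$ for every $u\in\rn$, where $\Psi(u)=b\cdot u+\tfrac12 Qu\cdot u+\int_{0<|y|<1}\bigl(\eup^{u\cdot y}-1-u\cdot y\bigr)\,\nu(dy)<\infty$, and $\eup^{u\cdot Y_t-t\Psi(u)}$ is a martingale. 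Applying Doob's $L^2$-maximal inequality to the (square-integrable on $[0,C]$) exponential martingales $\eup^{(\pm\sqrt{n}\,\kappa\,e_j)\cdot Y_t-t\Psi(\pm\sqrt{n}\,\kappa\,e_j)}$, together with the elementary bound $\eup^{\kappa|x|}\le\sum_{j=1}^{n}\bigl(\eup^{\sqrt{n}\,\kappa\,x_j}+\eup^{-\sqrt{n}\,\kappa\,x_j}\bigr)$, one gets $\Ee\bigl[\eup^{\kappa\sup_{t\le C}|Y_t|}\bigr]<\infty$ for every $\kappa>0$. Since a locally bounded submultiplicative $g$ is exponentially bounded, $g(x)\le\alpha\,\eup^{\beta_0|x|}$ (\cite[Lemma 25.5]{sato}), this yields $\Ee\bigl[\sup_{t\le C}g(Y_t)\bigr]\le\alpha\,\Ee\bigl[\eup^{\beta_0\sup_{t\le C}|Y_t|}\bigr]<\infty$, and the displayed bound follows.

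The one genuinely delicate point is the passage to the supremum in the part $Y$: one has to bound $\Ee\bigl[\sup_{t\le C}\eup^{\beta_0|Y_t|}\bigr]$, not merely $\sup_{t\le C}\Ee\bigl[\eup^{\beta_0|Y_t|}\bigr]$, which is why the exponential martingales and Doob's inequality are invoked instead of a plain moment computation; the remaining estimates are bookkeeping. Note also that continuity of $g$ is never used — every dominating bound is expressed through $\sup_{t\le C}|Y_t|$, $N_C$ and the $W_i$, which are measurable irrespective of how irregular $g$ is.
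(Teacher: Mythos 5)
Your proof is correct, but it takes a genuinely different route from the paper's. Both arguments start from the same decomposition $X=Y+Z$ (equivalently $\psi=\psi_2+\psi_1$) into the bounded-jump part with triplet $(b,Q,\nu|_{\{0<|y|<1\}})$ and the independent compound Poisson part carrying the jumps of modulus $\geq 1$, and both use submultiplicativity to split $g(X_t)$. From there the paper verifies uniform integrability directly from the definition: it splits the tail event $\{g(X_t^1)g(X_t^2)>r/c\}$ into the two events $\{g(X_t^i)>\sqrt{r/c}\}$, disposes of the bounded-jump factor by $L^2$-boundedness (exponential moments of a L\'evy process with bounded jumps), and handles the compound Poisson factor by writing out its law $\sum_k \eup^{-t\nu_1(\rn)}\tfrac{t^k}{k!}\nu_1^{*k}$ and letting $r\to\infty$ via dominated convergence. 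You instead dominate the whole family $(g(X_t))_{t\le C}$ by a single integrable random variable: a pathwise bound through the partial sums $W_1+\dots+W_k$, $k\le N_C$, for the compound Poisson part, and Doob's $L^2$-maximal inequality for the exponential martingales of $Y$ combined with $g(x)\le\alpha\eup^{\beta_0|x|}$. This buys more than is needed (essentially the maximal-moment statement $\Ee\bigl[\sup_{t\le C}g(X_t)\bigr]<\infty$, i.e.\ the analogue of Sato's Theorem 25.18 rather than 25.3) at the price of invoking Doob's inequality, whereas the paper's argument stays at the level of one-dimensional marginals and elementary dominated convergence. Two small remarks: your necessity direction (UI $\Rightarrow$ $L^1$-bounded $\Rightarrow$ \eqref{gliou-e23}) is the right one and is in fact left implicit in the paper; and since $g$ is only assumed measurable, $\sup_{t\le C}g(X_t)$ need not be measurable, so strictly speaking you should state the conclusion as domination of each $g(X_t)$ by the explicit measurable variable $c\,\alpha\eup^{\beta_0\sup_{t\le C}|Y_t|}\bigl(g(0)+c^{N_C-1}\prod_{i=1}^{N_C}(1+g(W_i))\bigr)$ rather than as finiteness of a maximal expectation---your closing remark shows you are aware of this, and the argument as deployed only uses the measurable dominating variable, so the proof stands.
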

\begin{proof}
    As in the proof of Lemma~\ref{gliou-25} we write $\psi = \psi_1 + \psi_2$ where $\psi_1, \psi_2$ are characteristic exponents of L\'evy processes $X^1  = (X_t^1)_{t\geq 0}$ and $X^2 = (X_t^2)_{t\geq 0}$ with triplets $(0,0,\nu_1)$, $\nu_1(dy) := \I_{[1,\infty)}(|y|)\nu(dy)$ and $(b,Q,\nu_2)$, $\nu_2(dy) := \I_{(0,1)}(|y|)\nu(dy)$, respectively.

    It is well known, cf.\ \cite{sato}, that $X_t = X_t^1 + X_t^2$ and that the processes $X^1$ and $X^2$ are stochastically independent. Because of the submultiplicativity of $g$,
    \begin{align*}
    \Ee& \left[g(X_t^1+X_t^2)\I_{\left\{g(X_t^1+X_t^2)>r\right\}}\right]\\
    &\leq c \Ee\left[ g(X_t^1)g(X_t^2)\I_{\left\{g(X_t^1)g(X_t^2)>r/c\right\}}\right]\\
    &\leq \Ee \left[g(X_t^1)g(X_t^2)\left(\I_{\left\{g(X_t^1)>\sqrt{r/c}\right\}} + \I_{\left\{g(X_t^2)>\sqrt{r/c}\right\}}\right)\right]\\
    &= \Ee \left[g(X_t^1)\I_{\left\{g(X_t^1)>\sqrt{r/c}\right\}}\right] \Ee\left[ g(X_t^2)\right] + \Ee \left[g(X_t^2)\I_{\left\{g(X_t^2)>\sqrt{r/c}\right\}}\right] \Ee \left[g(X_t^1)\right].
    \end{align*}
    Any locally bounded submultiplicative function is exponentially bounded, and so we have $g(x)\leq \alpha\exp(\beta|x|)$. Since a L\'evy process with bounded jumps has exponential moments, cf.~\cite[Theorem 25.3, p.~159]{sato}, we see that
    \begin{align*}
        \Ee \left[g^{ 2}(X_t^2)\right]
        \leq \alpha\Ee\left[ \eup^{ 2\beta |X_t^2|}\right]
        \leq \alpha\sup_{t\leq C}\Ee\left[ \eup^{2\beta |X_t^2|}\right]
        < \infty.
    \end{align*}
    This implies that $(g(X_t^2))_{t\in [0,C]}$ is  $L^2$-bounded, hence uniformly integrable.

    In order to prove uniform integrability of $(g(X_t^1))_{t\in [0,C]}$, we observe that $X_t^1$ is a compound Poisson process. This is in distribution equal to $\sum\limits_{i=0}^{N_t}Y_i$ where $N_t$ is an independent Poisson process with parameter $\nu_1(\rn)$, and $(Y_i)_{i\in \nat}$ is an independent (of $N_t$) sequence of independent and identically distributed random variables, such that each $Y_i$ has the distribution $\nu_1/\nu_1(\rn)$. The sum $\sum_{i=0}^{N_t} Y_i$ has the probability law
    \begin{align*}
    P(N_t=0)\delta_0(dy) + \sum_{k=1}^\infty P(N_t=k) P(Y_1+\dots+Y_k\in dy)
    &= \sum_{k=0}^\infty \frac{P(N_t=k)}{\nu_1(\rn)^k} \,\nu_1^{*k}(dy)\\
    &= \sum_{k=0}^\infty \eup^{-t\nu_1(\rn)} \frac{t^k}{k!} \,\nu_1^{*k}(dy).
    \end{align*}
     ($\nu_1^{*k}$ denotes the $k$-fold convolution product, $\nu_1^{*0}=\delta_0$). A good reference is \cite[Chapter 1.4]{sato} or \cite[Example 3.2.d), Theorem 3.4]{barca}. We conclude that
    \begin{align*}
        \Ee \left[g(X_t^1)\I_{\left\{g(X_t^1)>\sqrt{r/c}\right\}}\right]
        &= \sum_{k=0}^\infty \eup^{-t\nu_1(\rn)}\frac{t^k}{k!}\int_{\rn} g(y) \I_{\left\{g(y)>\sqrt{r/c}\right\}}\,\nu_1^{\ast k}(dy)\\
        &\leq \sum_{k=0}^\infty \frac{C^k}{k!}\int_{\rn} g(y)\I_{\left\{g(y)>\sqrt{r/c}\right\}}\,\nu_1^{\ast k}(dy).
    \end{align*}
    We can now use Lebesgue's dominated convergence theorem to get
    \begin{align*}
        \lim_{r\to\infty}
        \sum_{k=0}^\infty \frac{C^k}{k!} \int_{\rn}g(y) \I_{\left\{g(y)>\sqrt{r/c}\right\}}\nu_1^{\ast k}(dy)
        = 0.
    \end{align*}
    This shows that $(g(X_t^1))_{t\in [0,C]}$ is uniformly integrable, finishing the proof.
\end{proof}

We can now show an analogue of Lemma~\ref{prob-17} for unbounded $f$.
\begin{lemma}\label{gliou-29}
    Let $\Lcal_\psi$ be the generator of a L\'evy process with characteristic function $\psi$ given by \eqref{vor-e06} and semigroup $(\Pcal_t)_{t\geq 0}$. Assume that $g:\rn\to (0,\infty)$ is a locally bounded submultiplicative function satisfying $\int_{|y|\geq 1} g(y)\,\nu(dy)<\infty$. For any $f\in C(\rn)$ such that $|f(x)|\leq g(x)$ the following assertions are equivalent:
    \begin{enumerate}
    \item\label{gliou-29-a} $\Lcal_\psi f=0$ weakly, i.e.\ $\scalp{\Lcal_\psi f}{\phi} = 0$ for all $\phi\in C_c^\infty(\rn)$;
    \item\label{gliou-29-b} $\Pcal_t f = f$ for all $t>0$.
    \end{enumerate}
\end{lemma}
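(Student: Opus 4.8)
The plan is to obtain, with no hypothesis on $f$ beyond $f\in C(\rn)$ and $|f|\le g$, the integrated Kolmogorov identity
\begin{gather*}
    \scalp{\Pcal_t f}{\phi}-\scalp{f}{\phi}=\int_0^t\scalp{\Pcal_s f}{\Lcal_{\overline\psi}\phi}\,ds,\qquad \phi\in C_c^\infty(\rn),\ t\ge 0,
\end{gather*}
and then to read off both implications from it. Since $\scalp{\Pcal_s f}{\Lcal_{\overline\psi}\phi}=\scalp{f}{\Lcal_{\overline\psi}\phi}$ whenever $\Pcal_s f=f$, assertion \ref{gliou-29-b} forces the right-hand side to be $t\scalp{f}{\Lcal_{\overline\psi}\phi}$ while the left-hand side vanishes, so $\scalp{f}{\Lcal_{\overline\psi}\phi}=0$, which is \ref{gliou-29-a}. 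For the converse I would regularise: the Friedrichs mollification $f_\epsilon:=j_\epsilon*f$ is smooth, still satisfies $|f_\epsilon|\le c\,g$ and $|\partial^\alpha f_\epsilon|\le c_\alpha\,g$ for every multi-index $\alpha$ (submultiplicativity and local boundedness of $g$), and $\Lcal_\psi f_\epsilon=0$ weakly because $\Lcal_\psi$ commutes with convolutions (Remark~\ref{gen-03}). For a smooth function of such controlled growth the weak and the pointwise meaning of $\Lcal_\psi f_\epsilon$ coincide---this is a Fubini computation on \eqref{vor-e12} using $\int_{|y|\ge1}g(y)\,\nu(dy)<\infty$, together with continuity of $x\mapsto\Lcal_\psi f_\epsilon(x)$---so in fact $\Lcal_\psi f_\epsilon\equiv 0$ pointwise. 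The same commutation gives $\Lcal_\psi(\Pcal_s f_\epsilon)=\Pcal_s(\Lcal_\psi f_\epsilon)\equiv 0$, and since $\Pcal_s f_\epsilon=f_\epsilon*\widetilde\mu_s$ is again smooth with all derivatives dominated by $c_{\alpha,s}\,g$, this yields $\scalp{\Pcal_s f_\epsilon}{\Lcal_{\overline\psi}\phi}=0$ for every $s>0$. Feeding this into the identity above gives $\scalp{\Pcal_t f_\epsilon}{\phi}=\scalp{f_\epsilon}{\phi}$ for all $\phi\in C_c^\infty(\rn)$, hence $\Pcal_t f_\epsilon=f_\epsilon$ (both sides continuous); letting $\epsilon\to0$ under dominated convergence (with $|f_\epsilon(X_t+x)|\le c\,g(X_t+x)$ and $g(X_t+\cdot)$ integrable by Lemma~\ref{gliou-27}) gives $\Pcal_t f=f$, i.e.\ \ref{gliou-29-b}.

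It remains to establish the integrated identity, and this is where the growth hypothesis enters. Since $C_c^\infty(\rn)\subset\Scal(\rn)\subset D(\Lcal_{\overline\psi})$ in $L^1(\rn)$ and $(\Pcal_t^*)_{t\ge0}$ is the strongly continuous semigroup generated there by $\Lcal_{\overline\psi}$ (see \cite[Proposition~12.7]{ber-for}), we have $\Pcal_t^*\phi-\phi=\int_0^t\Pcal_s^*\Lcal_{\overline\psi}\phi\,ds$ in $L^1(\rn)$. By Lemma~\ref{gliou-25} the moment condition on $\nu$ makes $g\,\Lcal_{\overline\psi}\phi$ integrable, and a change of variables gives $\|\Pcal_s^*\Lcal_{\overline\psi}\phi\|_{L^1(g\,dx)}\le c\,\Ee[g(X_s)]\,\|g\,\Lcal_{\overline\psi}\phi\|_{L^1}$; since $s\mapsto c\,\Ee[g(X_s)]$ is submultiplicative and bounded on $[0,1]$ by the uniform integrability of Lemma~\ref{gliou-27}, it is exponentially bounded, so the Bochner integral converges in $L^1(\rn,g\,dx)$ as well, to the same function. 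Pairing with $f$, which is a bounded functional on $L^1(\rn,g\,dx)$ because $|f|\le g$, and using Fubini for the swap $\scalp{f}{\Pcal_s^*\Lcal_{\overline\psi}\phi}=\scalp{\Pcal_s f}{\Lcal_{\overline\psi}\phi}$ (legitimate since $\Ee\!\int|f(x)|\,|\Lcal_{\overline\psi}\phi(x-X_s)|\,dx\le c\,\Ee[g(X_s)]\,\|g\,\Lcal_{\overline\psi}\phi\|_{L^1}<\infty$, and likewise $\scalp{f}{\Pcal_t^*\phi}=\scalp{\Pcal_t f}{\phi}$) produces exactly the asserted identity.

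The main obstacle is the bookkeeping of the unboundedness of $f$: every manipulation that was a trivial duality in the bounded case of Lemma~\ref{prob-17}---making sense of $\scalp{\Pcal_s f}{\Lcal_{\overline\psi}\phi}$, interchanging $f$ with $\Pcal_s^*$, upgrading the weak equation to a pointwise one for $f_\epsilon$, integrating the semigroup relation---must now be carried out in the weighted space $L^1(\rn,g\,dx)$, and is only legitimate because the moment condition $\int_{|y|\ge1}g\,d\nu<\infty$ simultaneously controls $g\,\Lcal_{\overline\psi}\phi$ (Lemma~\ref{gliou-25}) and $\Ee[g(X_s)]$ through the uniform integrability of $(g(X_s))_{s\in[0,C]}$ (Lemma~\ref{gliou-27}). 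I expect the most delicate point to be the weak-to-pointwise identification $\scalp{f_\epsilon}{\Lcal_{\overline\psi}\phi}=\int\Lcal_\psi f_\epsilon(x)\,\phi(x)\,dx$ with $x\mapsto\Lcal_\psi f_\epsilon(x)$ continuous: this rests on dominating the integrand in \eqref{vor-e12} by $c\,|y|^2$ for $|y|<1$ and by $c\,g(x)g(y)$ for $|y|\ge1$, and then invoking the moment condition once more.
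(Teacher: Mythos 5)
Your proposal is correct, and its backbone is the same as the paper's: the integrated Dynkin/Duhamel identity $\scalp{\Pcal_t f-f}{\phi}=\int_0^t\scalp{\Pcal_s f}{\Lcal_{\overline\psi}\phi}\,ds$, obtained by pairing $f$ against the $L^1$-adjoint semigroup and justified by exactly the two ingredients the paper uses, namely $\|g\Lcal_{\overline\psi}\phi\|_{L^1}<\infty$ (Lemma~\ref{gliou-25}) and the finiteness/uniform integrability of $\Ee g(X_s)$ (Lemma~\ref{gliou-27}). The differences are in how you extract the two implications. For \ref{gliou-29-b}$\Rightarrow$\ref{gliou-29-a} your observation that the integrand is constant in $s$ under $\Pcal_s f=f$ is in fact a small simplification of the paper, which instead proves right-continuity of $s\mapsto\Ee\bigl(\int f(x)\Lcal_\psi^*\phi(x-X_s)\,dx\bigr)$ via Vitali and differentiates $\frac1t\int_0^t$ at $t=0$. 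For \ref{gliou-29-a}$\Rightarrow$\ref{gliou-29-b} your mollification--pointwise-generator detour works but is unnecessary: the Fubini swap you already perform gives $\scalp{\Pcal_s f}{\Lcal_{\overline\psi}\phi}=\Ee\,\scalp{f}{\Lcal_{\overline\psi}\bigl(\phi(\cdot-X_s)\bigr)}$ by translation invariance of $\Lcal_{\overline\psi}$, and since $\phi(\cdot-y)\in C_c^\infty(\rn)$ the weak equation kills the integrand outright; this is the paper's one-line argument, and it also spares you the continuity bookkeeping for $\Pcal_t f_\epsilon$ and the $\epsilon\to0$ limit (the paper instead checks continuity of $x\mapsto\Pcal_t f(x)$ once, by uniform integrability of $(g(X_t+z))_{|z|\le R}$ and Vitali, to upgrade the distributional identity $\Pcal_t f=f$ to a pointwise one). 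One technical point you gloss over is the strong measurability of $s\mapsto\Pcal_s^*\Lcal_{\overline\psi}\phi$ as an $L^1(g\,dx)$-valued map needed for your Bochner-integral argument; it is routine (continuity follows by dominated convergence), but should be said.
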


\begin{proof}
An obvious variation of the proof of Lemma~\ref{gliou-27} reveals that also the family $(g(X_t+z))_{t\in [0,C], |z|\leq R}$ is uniformly integrable.

Since $|f(x)|\leq g(x)$, we see that ${(f(X_t+z))_{t\in [0,C],|z|\leq R}}$ is also uniformly integrable and we can use Vitali's convergence theorem (e.g.\ \cite[Theorem 22.7, p.~262]{mims}) to conclude that $x\mapsto\Pcal_t f(x) = \Ee f(X_t+x)$ is continuous.

Let $\phi\in C^\infty_c(\rn)$. We see with Dynkin's formula \cite[Lemma 12.2]{barca} or \cite[p.~27]{LM3} that
\begin{align}
    \int_{\rn}\left[\Pcal_tf(x) - f(x)\right] \phi(x)\, dx
    &=\notag \int_{\rn} f(x)\left[\Pcal^*_t\phi(x)-\phi(x)\right] dx\\
    &=\notag \int_{\rn} f(x)\int_{0}^t \Pcal^*_s \Lcal_\psi^*\phi(x)\,ds\,dx\\
    &=\label{gliou-e28}\int_{0}^t\Ee \left(\int_{\rn}f(x) \Lcal_\psi^*\phi (x-X_s)\,dx\right) ds.
\end{align}
In the last equality we use that $(-X_t)_{t\geq 0}$ is the L\'evy process corresponding to $\Pcal_t^*$. The use of Fubini's theorem is justified by a calculation similar to the one in the following paragraph. Thus, if $\Lcal_\psi f=0$ weakly, we see that $\Pcal_tf=f$.

For the converse, we use Lemma~\ref{gliou-25} and the calculation
\begin{align*}
    \left|\int_{\rn} f(x) \Lcal_\psi^*\phi(x-y)\,dx\right|
    &\leq \int_{\rn}g((x-y)+y) \left|\Lcal_{\psi}^*\phi (x-y)\right| dx\\
    &\leq c g(y) \int_{\rn} g(x-y) \left|\Lcal_\psi^*\phi(x-y)\right| dx\\
    &= c g(y) \|g\Lcal_\psi^*\phi\|_{L^1}
\end{align*}
to see that  the function $y\mapsto \int_{\rn} f(x) \Lcal_\psi^*\phi(x-y)\,dx$ is continuous and bounded by $g$. Since $t\mapsto X_t$ is right-continuous and $(g(X_t))_{t\in [0,C]}$ is uniformly integrable, we can use Vitali's convergence theorem to conclude that the function
\begin{gather*}
    t\mapsto \Ee \left(\int_{\rn} f(x) \Lcal_\psi^*\phi(x-X_t)\,dx\right)
\end{gather*}
is right-continuous, and so
\begin{align*}
    \lim_{t\to 0}\frac{1}{t}\int_{0}^t \Ee \left(\int_{\rn} f(x) \Lcal_\psi^*\phi(x-X_s)\,dx\right) ds
    = \int_{\rn}f(x)\Lcal_\psi^*\phi(x)\,dx .
\end{align*}
Thus, we get from \eqref{gliou-e28} and $\Pcal_tf=f$ that $\Lcal_\psi f=0$ weakly.
\end{proof}

Up to now we have used the fact that the only real zero of the symbol $\psi$ is $\xi=0\in\rn$. We need to extend $\psi$ to strips in $\comp^n$\footnote{If a Levy exponent can be extended analytically to a strip in the complex domain, then the L\'{e}vy--Khintchine representation extends to these complex arguments, cf.\ \cite[Theorem 8.4.2]{lukacs70} for the one-dimensional case.}, and the following lemma shows when this is possible. Since $x\mapsto \eup^{\xi\cdot x}$ is a positive function, the integrals $\Ee \eup^{\xi\cdot X_t} = \int \eup^{\xi\cdot y}\,\mu_t(dy)$ are always defined in $[0,+\infty]$.
\begin{lemma}\label{gliou-31}
    Let $\Lcal_\psi$ be the generator of a L\'evy process $(X_t)_{t\geq 0}$ with characteristic exponent $\psi$ given by \eqref{vor-e06}.
    \begin{enumerate}
    \item\label{gliou-31-a} For every $t>0$ the set $F(t) = \{\xi\in\rn \mid \Ee \eup^{\xi\cdot X_t} < \infty\}$ coincides with $F = \{\xi\in\rn \mid \int_{|y|\geq 1} \eup^{\xi\cdot y}\,\nu(dy) < \infty\}$; moreover $F(t)$, hence $F$, is a non-empty convex set.

    \item\label{gliou-31-b} $\eta_0\in F$ if, and only if, $\psi$ can be analytically extended to the strip $\Sigma(\eta_0) = \{\xi - \iup\lambda\eta_0 \mid \zeta\in\rn, \lambda\in [0,1]\}$.

    \item\label{gliou-31-c}
        We have $\{\xi\in\rn \mid \Ee \eup^{\xi\cdot X_t} = 1\} = \{\eta\in\rn \mid \psi(-\iup\eta)=0\}$.
    \end{enumerate}
\end{lemma}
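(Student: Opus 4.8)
The plan is to prove the three statements in turn, each building on its predecessors, and to isolate the single place where a genuinely non-elementary input is needed. For \ref{gliou-31-a}, apply the submultiplicative moment dichotomy \eqref{gliou-e23} (i.e.\ \cite[Theorem~25.3]{sato}) to $g(y)=\eup^{\xi\cdot y}$, which is continuous---hence locally bounded---and multiplicative, hence submultiplicative; this gives at once that $\Ee\eup^{\xi\cdot X_t}<\infty$ for one $t>0$ iff for all $t>0$ iff $\int_{|y|\geq 1}\eup^{\xi\cdot y}\,\nu(dy)<\infty$, i.e.\ $F(t)=F$ for every $t>0$. Since $0\in F$ trivially, $F\neq\emptyset$, and convexity of $F(t)$ is H\"older's inequality applied to $\eup^{\lambda\xi\cdot X_t}\cdot\eup^{(1-\lambda)\eta\cdot X_t}$.

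For the ``only if'' part of \ref{gliou-31-b}, if $\eta_0\in F$ I would substitute $\zeta=\xi-\iup\lambda\eta_0$ ($\xi\in\rn$, $\lambda\in[0,1]$) into the L\'evy--Khintchine integrand in \eqref{vor-e06}. A Taylor estimate bounds it by $c(1+|\zeta|^2)|x|^2$ for $|x|<1$, which is $\nu$-integrable there, while for $|x|\geq 1$ one has $|1-\eup^{\iup\zeta\cdot x}|\leq 1+\eup^{\lambda\eta_0\cdot x}\leq 2+\eup^{\eta_0\cdot x}$, which is $\nu$-integrable over $\{|x|\geq 1\}$ precisely because $\eta_0\in F$. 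Hence the integral converges absolutely and locally uniformly in $\zeta$; together with the holomorphic drift and Gaussian terms it defines, by differentiation under the integral sign (or Morera), a function analytic in the interior of $\Sigma(\eta_0)$, continuous up to the boundary, and equal to $\psi$ for $\lambda=0$---this is the extension.

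The ``if'' part of \ref{gliou-31-b} is the delicate one, and here I would reduce to one dimension. If $\psi$ extends analytically to $\Sigma(\eta_0)$, then restricting to the complex line $w\mapsto w\eta_0$ (for which $w\eta_0\in\Sigma(\eta_0)$ whenever $\Im w\in[-1,0]$) shows that $w\mapsto\eup^{-t\psi(w\eta_0)}$---which on $\real$ is the characteristic function of the real-valued L\'evy variable $\eta_0\cdot X_t$---extends analytically to the open strip $\{-1<\Im w<0\}$ and continuously to its closure. The classical theory of analytic characteristic functions---this is where the input quoted in the footnote (Lukacs \cite[Theorem~8.4.2]{lukacs70}) enters, in its one-dimensional form---then gives $\Ee\eup^{s\,\eta_0\cdot X_t}<\infty$ together with $\Ee\eup^{s\,\eta_0\cdot X_t}=\eup^{-t\psi(-\iup s\eta_0)}$ for $0\leq s<1$, hence $\eta_0\in F$. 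I expect the main obstacle to be the endpoint $s=1$: analyticity on the \emph{open} strip alone only yields $s\eta_0\in F$ for $s<1$, and $F$ need not be closed, so one has to let $s\uparrow 1$ and combine monotone convergence on $\{\eta_0\cdot X_t\geq 0\}$, dominated convergence on its complement, and the assumed continuity of $\psi$ up to $\lambda=1$, obtaining $\Ee\eup^{\eta_0\cdot X_t}=\eup^{-t\psi(-\iup\eta_0)}<\infty$.

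For \ref{gliou-31-c}, both sides lie in $F$: the left one because the value $1$ is in particular finite, and the right one because $\psi(-\iup\eta)$, read off from the extended L\'evy--Khintchine formula, is a finite \emph{real} number only when $\int_{|y|\geq 1}\eup^{\eta\cdot y}\,\nu(dy)<\infty$ (the integrand $1-\eup^{\eta\cdot y}$ is bounded above, so convergence of its integral forces the exponential moment). So fix $\xi\in F$. By \ref{gliou-31-b} the exponent extends analytically to $\Sigma(\xi)$, and since $\xi\in F$ the map $\zeta\mapsto\int_{\rn}\eup^{\iup\zeta\cdot y}\,\mu_t(dy)$ is likewise analytic on the interior of $\Sigma(\xi)$ and continuous up to the boundary; the two functions agree with $\widecheck\mu_t=\eup^{-t\psi}$ on $\rn$, hence on all of $\Sigma(\xi)$ by uniqueness of analytic continuation. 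Evaluating at $\zeta=-\iup\xi$ gives $\Ee\eup^{\xi\cdot X_t}=\widecheck\mu_t(-\iup\xi)=\eup^{-t\psi(-\iup\xi)}$; since the left-hand side is strictly positive and real, $\psi(-\iup\xi)\in\real$, so $\Ee\eup^{\xi\cdot X_t}=1$ holds if and only if $\psi(-\iup\xi)=0$. This is the asserted equality.
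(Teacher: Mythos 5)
Your proof is correct and follows essentially the same route as the paper: part \ref{gliou-31-a} via the moment criterion \eqref{gliou-e23} with $g(y)=\eup^{\xi\cdot y}$ and H\"older, part \ref{gliou-31-b} by substituting $\xi-\iup\lambda\eta_0$ into the L\'evy--Khintchine formula for the ``only if'' half and by the Lukacs-type result on analytic continuation of (infinitely divisible) characteristic functions for the ``if'' half---the very black box the paper invokes in its footnote---and part \ref{gliou-31-c} via the identity $\Ee\, \eup^{\eta\cdot X_t}=\eup^{-t\psi(-\iup\eta)}$; you merely supply more detail than the paper (the one-dimensional reduction, the endpoint $\lambda=1$, the uniqueness-of-continuation step). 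One cosmetic point: positivity of $\Ee\,\eup^{\xi\cdot X_t}$ for a single $t$ only gives $\Im\psi(-\iup\xi)\in\frac{2\pi}{t}\integer$, not $\Im\psi(-\iup\xi)=0$, but realness of $\psi(-\iup\xi)$ is immediate from the extended formula you established in \ref{gliou-31-b} (or by varying $t$), so nothing essential is affected.
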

\begin{proof}
    The first part of the lemma follows immediately from the criterion \eqref{gliou-e23} on generalized moments with $g(y) = \eup^{\xi\cdot y}$. The convexity of $F(t)$ is a consequence of H\"{o}lder's inequality.

    For the second part, we replace in \eqref{vor-e06} $\xi$ by $\xi - \iup\eta$ to get (formally)
    \begin{align*}
        \psi(\xi - \iup\eta)
        &= - b\cdot \eta
        + \frac 12 Q\xi\cdot\xi - \frac 12 Q\eta\cdot\eta
        + \int\limits_{\rn\setminus\{0\}} \left(1-\cos(\xi\cdot x) \eup^{\eta\cdot x} + \eta\cdot x\I_{(0,1)}(|x|)\right)\nu(dx)\\
        &\qquad\mbox{}- \iup b\cdot \xi - \iup Q\xi\cdot\eta + \iup\int_{\rn\setminus\{0\}} \left(\xi\cdot x \I_{(0,1)}(|x|) - \sin(\xi\cdot x) \eup^{\eta\cdot x} \right)\nu(dx).
    \end{align*}
    From this expression it is immediate that $\psi(\xi- \iup\eta)\in\comp$ if, and only if, $\eta\in F$. Continuity, resp., one-sided continuity at the boundaries, is now a routine application of the dominated convergence theorem.

    Finally, for the last part we use that for a L\'evy process $\Ee \eup^{\iup\xi\cdot X_t} = \eup^{-t\psi(\xi)}$, hence $\Ee \eup^{\eta\cdot X_t} = \eup^{-t\psi(-\iup\eta)}$ whenever one side of the latter equality is well-defined.
\end{proof}

We are finally ready for the proof of the Liouville theorem for positive, exponentially bounded solutions of $\Lcal_\psi f = 0$.

\begin{theorem}\label{gliou-35}
    Let $\Lcal_\psi$ be the generator of a L\'evy process with characteristic exponent $\psi$ given by \eqref{vor-e06}. Assume that there exists a locally bounded, submultiplicative function $g:\rn\to [1,\infty)$ satisfying $\int_{|y|\geq 1} g(y)\nu(dy)<\infty$. The following assertions are equivalent:
    \begin{enumerate}
    \item\label{gliou-35-a}
    every measurable, positive and $g$-bounded function $0\leq f\leq g$ such that $\Lcal_\psi f = 0$ weakly is constant.

    \item\label{gliou-35-b}
    $\{\xi\in\rn \mid \psi(\xi) = 0\} = \{\eta\in\rn \mid \psi(-\iup\eta) = 0\} = \{0\}$.
    \end{enumerate}
\end{theorem}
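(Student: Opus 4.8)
The plan is to run the probabilistic argument of Section~\ref{sec-prob} with Deny's representation Theorem~\ref{gliou-22} in place of the Choquet--Deny Theorem~\ref{prob-15}, fed by the preparatory Lemmas~\ref{gliou-25}, \ref{gliou-27}, \ref{gliou-29} and~\ref{gliou-31}.

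\emph{Direction \ref{gliou-35-b}\,$\Rightarrow$\,\ref{gliou-35-a}.} Let $f$ be measurable with $0\le f\le g$ and $\Lcal_\psi f=0$ weakly. Mollifying as in Remark~\ref{gen-03} and using $j_\epsilon*g\le c_\epsilon g$ (submultiplicativity and local boundedness of $g$, so that $g(x-y)\le c\,g(x)g(-y)$), each $j_\epsilon*f$ is a $C^\infty$-function with $0\le j_\epsilon*f\le c_\epsilon g$ and $\Lcal_\psi(j_\epsilon*f)=0$ weakly, so it suffices to treat $f\in C(\rn)$. By Lemma~\ref{gliou-29}, $\Lcal_\psi f=0$ weakly is equivalent to $\Pcal_t f=f$ (that is, $f=f*\widetilde{\mu}_t$) for all $t>0$. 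Multiplying by $\eup^{-t}$ and integrating in $t$ (Tonelli, $f\ge 0$) collapses this family into the \emph{single} convolution equation
\[
    f=f*\widetilde U,\qquad \widetilde U(dy):=U(-dy),\quad U(dy):=\int_0^\infty \eup^{-t}\mu_t(dy)\,dt,
\]
where $U$ is a probability measure with $\supp U=\overline{\bigcup_{t>0}\supp\mu_t}$.

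The smallest closed subgroup $H$ of $\rn$ containing $\supp\widetilde U$ is the group generated by $\bigcup_{t>0}\supp\mu_t$, and its annihilator $\{\xi\in\rn\mid \eup^{\iup\xi\cdot y}=1\ \forall y\in H\}$ equals $\{\xi\mid\widecheck\mu_t(\xi)=1\ \forall t>0\}=\{\xi\mid \eup^{-t\psi(\xi)}=1\ \forall t>0\}=\{\psi=0\}=\{0\}$ by \ref{gliou-35-b}, so $H=\rn$ by the structure of closed subgroups of $\rn$ and Theorem~\ref{gliou-22} applies to $\mu=\widetilde U$: $f(x)=\int_{E(\widetilde U)}\eup^{-\xi\cdot x}\,\kappa(d\xi)$ with $\kappa$ supported on $E(\widetilde U)$. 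To identify $E(\widetilde U)$, note that $\int \eup^{\xi\cdot y}\widetilde U(dy)=\int_0^\infty \eup^{-t}\Ee[\eup^{-\xi\cdot X_t}]\,dt$ is infinite unless $-\xi\in F$, and for $-\xi\in F$ equals $\int_0^\infty \eup^{-t(1+\psi(\iup\xi))}\,dt$ with $\psi(\iup\xi)\in\real$ (Lemma~\ref{gliou-31}\ref{gliou-31-a},\,\ref{gliou-31-c}), hence equals $1$ exactly when $\psi(\iup\xi)=0$ (and such $\xi$ automatically satisfy $-\xi\in F$). Therefore
\[
    E(\widetilde U)=\{\xi\in\rn\mid\psi(\iup\xi)=0\}=-\{\eta\in\rn\mid\psi(-\iup\eta)=0\}=\{0\}
\]
by \ref{gliou-35-b}, and $f\equiv\kappa(\{0\})$ is constant.

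\emph{Direction \ref{gliou-35-a}\,$\Rightarrow$\,\ref{gliou-35-b}} (contraposition). Suppose \ref{gliou-35-b} fails. If some $\eta\in\{\psi=0\}$ is nonzero, then $\Lcal_\psi$ annihilates $1$ and $\eup^{\pm\iup\eta\cdot x}$ (as $\psi(0)=\psi(\eta)=\overline{\psi(\eta)}=\psi(-\eta)=0$), so $f(x):=\tfrac12(1+\cos(\eta\cdot x))$ is non-constant with $0\le f\le 1\le g$ and $\Lcal_\psi f=0$ weakly, contradicting \ref{gliou-35-a}. If instead $\{\psi=0\}=\{0\}$ but $\psi(-\iup\eta_0)=0$ for some $\eta_0\neq 0$, then $\eta_0\in F$ (Lemma~\ref{gliou-31}), so $g_0(x):=1+\eup^{\eta_0\cdot x}$ is locally bounded, submultiplicative, bounded below by $1$, and satisfies $\int_{|y|\ge1}g_0\,d\nu<\infty$; the function $f(x):=\eup^{\eta_0\cdot x}$ obeys $\Pcal_t f=\eup^{-t\psi(-\iup\eta_0)}f=f$, hence $\Lcal_\psi f=0$ weakly by Lemma~\ref{gliou-29} (applied with the weight $g_0$), and $0\le f\le g_0$ is non-constant, again contradicting \ref{gliou-35-a}.

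The main obstacle is the forward direction, and specifically the point where a single transition measure $\mu_t$ may be carried by a proper closed subgroup of $\rn$ even when $\{\psi=0\}=\{0\}$ (e.g.\ a nonzero pure drift), so that Theorem~\ref{gliou-22} cannot be applied to any individual $\mu_t$; collapsing the semigroup into the $1$-potential measure $U$ is the device that removes this. The remaining work is routine: pushing the continuity and uniform-integrability estimates of Lemmas~\ref{gliou-25}--\ref{gliou-29} through the mollification, and the bookkeeping identifying $E(\widetilde U)$ with $-\{\psi(-\iup\cdot)=0\}$ via the analytic extension of $\psi$ in Lemma~\ref{gliou-31}.
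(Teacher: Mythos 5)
Your proposal follows the same skeleton as the paper's proof---mollify as in Remark~\ref{gen-03}, pass from the weak equation to $\Pcal_t f=f$ via Lemma~\ref{gliou-29}, then invoke Deny's Theorem~\ref{gliou-22} and identify the harmonic exponentials through Lemma~\ref{gliou-31}---but it deviates at one key step, and the deviation is an improvement. The paper applies Theorem~\ref{gliou-22} to a single transition law $\mu_t$, justified by the claim that $\{\psi=0\}=\{0\}$ forces $\supp\mu_t$ to generate $(\rn,+)$ for almost all $t$; as you point out, this claim fails (pure drift in dimension one: $\{\psi=0\}=\{0\}$, yet $\mu_t=\delta_{t\gamma}$ is supported in a rank-one lattice for every $t$), so Deny's theorem is not applicable to any individual $\mu_t$ there. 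Your device of collapsing the semigroup into the $1$-potential measure $U=\int_0^\infty \eup^{-t}\mu_t\,dt$, whose support generates the closed group generated by $\bigcup_{t>0}\supp\mu_t$, which has annihilator $\{\psi=0\}=\{0\}$ and hence is all of $\rn$, repairs exactly this gap, and your identification $E(\widetilde U)=\{\xi\mid\psi(\iup\xi)=0\}$ is correct: the resolvent turns $\eup^{-t\psi(\iup\xi)}$ into $(1+\psi(\iup\xi))^{-1}$, which equals $1$ precisely when $\psi(\iup\xi)=0$, and the integral diverges when $-\xi\notin F$. The only price is that you use the single equation $f=f*\widetilde U$ instead of the whole family $\Pcal_tf=f$, which is all that is needed; in exchange the argument covers all L\'evy processes, drift included.

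In the necessity direction you argue by explicit counterexamples instead of the paper's ``uniqueness of the Choquet representation'' remark, and here you inherit---and make visible---an imprecision that is already in the paper. When $\psi(-\iup\eta_0)=0$ with $\eta_0\neq0$, your counterexample $f(x)=\eup^{\eta_0\cdot x}$ is dominated by $g_0(x)=1+\eup^{\eta_0\cdot x}$ but not necessarily by the weight $g$ fixed in the statement, so strictly speaking it does not contradict assertion \ref{gliou-35-a} as literally formulated for that $g$ (Brownian motion with drift and $g\equiv1$: every $g$-bounded positive weak solution is constant, yet $\psi(-\iup\eta)$ has a nontrivial zero). The paper's own proof has the same defect in sharper form, since it infers $E(\mu_t)=\{0\}$ from the constancy of the $g$-bounded solutions without checking that the exponentials indexed by $E(\mu_t)$ are $g$-bounded; the equivalence really is between \ref{gliou-35-b} and the validity of \ref{gliou-35-a} for all admissible submultiplicative weights (equivalently, with the weight adapted to the exponential), and that is the statement your argument proves. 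Apart from this shared quantification issue, which you should state explicitly, your proof is complete and, in the sufficiency direction, more robust than the published one.
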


\begin{remark}
    Our proof shows that the extension of $\psi$ to complex values in Theorem~\ref{gliou-35} is understood in the sense of Lemma~\ref{gliou-31}. That is, if $\psi$ cannot be extended to a non-degenerate strip in $\rn\times\iup\rn$, then the condition $\{\eta\in\rn \mid \psi(-\iup\eta) = 0\} = \{0\}$ trivially holds, and $\Lcal_\psi$ satisfies \ref{gliou-35-a}.
\end{remark}

\begin{proof}[Proof of Theorem~\ref{gliou-35}]
    As in Remark \ref{gen-03} we can assume that $f\in C^\infty(\rn)$. If the mollifier is compactly supported, we see that the smoothed-out $f$ is again bounded by the submultiplicative function $g$:
    \begin{gather*}
        f*j_\epsilon(x)
        = \int f(x-y)j_\epsilon(y)\,dy
        \leq \int g(x-y)j_\epsilon(y)\,dy
        \leq cg(x)\int g(-y)j_\epsilon(y)\,dy
        = c'g(x).
    \end{gather*}

    Assume first that \ref{gliou-35-a} holds. In particular, every positive, bounded function $f$ is $g$-bounded with $g\equiv\|f\|_\infty$. Thus, \ref{gliou-35-a} includes the Liouville property discussed in Theorem~\ref{prob-21}, and we conclude with Theorem~\ref{prob-21} that $\{\xi\in\rn \mid \psi(\xi)=0\}=\{0\}$. In particular, $\supp(\mu_t)$ generates for almost all  $t>0$ the whole group $(\rn,+)$, since our condition rules out all lattice laws.

    Using Lemma~\ref{gliou-29} we can reduce $\Lcal_\psi f = 0$ weakly to $\mathcal{P}_t f=f$ for all $t>0$. Thus, we can apply Theorem~\ref{gliou-22} to deduce that $f$ has the representation
    \begin{align*}
        f(x)=\int_{E(\mu_t)}\eup^{\xi\cdot x}\,\kappa(d\xi)
        \quad\text{with}\quad
        E(\mu_t) = \left\{\xi\in\rn \mid \Ee \eup^{\xi\cdot X_t} = \int_\rn \eup^{\xi\cdot x}\,\mu_t(dx)=1\right\}
    \end{align*}
    for some measure $\kappa$ with support in $E(\mu_t)$. From Lemma~\ref{gliou-31} we know that $E(\mu_t) = \{\eta\in\rn \mid \psi(-\iup\eta) = 0\}$; if $f$ is constant, the uniqueness of the Choquet representation shows that $E(\mu_t)=\{0\}$, and \ref{gliou-35-b} follows.

    Since $\{\xi\in\rn \mid \psi(\xi)=0\}$ is equivalent to $\supp(\mu_t)$ generating the group $(\rn,+)$, the above argument already shows the converse implication \ref{gliou-35-b}$\Rightarrow$\ref{gliou-35-a}.
\end{proof}

\begin{example}\ \ \textbf{a)}
    A typical example where the condition $\{0\}=\{\xi\in\rn \mid  \Ee \eup^{\xi\cdot X_t} = 1\}$ is violated is Brownian motion with drift. Consider $(B_t+\gamma t)_{t\in [0,\infty)}$ where $(B_t)_{t\geq 0}$ is a standard Brownian motion in $\rn$ and  $\gamma\neq 0$ is the drift. The generator is given by $\frac{1}{2}\Delta + \gamma\cdot \nabla$ and the symbol is $\psi(\xi) = \frac 12|\xi|^2 - \iup\gamma\cdot\xi$. Thus,
    \begin{align*}
        \left(\frac{1}{2}\Delta_x + \gamma\cdot \nabla_x\right)\eup^{\eta\cdot x}
        = \left(\frac{1}{2} |\eta|^2  + \gamma\cdot\eta \right)\eup^{\eta\cdot x}
        = \psi(-\iup\eta) \eup^{\eta\cdot x}.
    \end{align*}
    Choosing $\eta = -2\gamma$, we see that  there exists a  non-constant solution. Moreover, it is easy to see that  $\Ee \left(\eup^{\eta\cdot(B_t+\gamma t)}\right)\big|_{\eta=-2\gamma} = 1$ for all $t\in [0,\infty)$.
\medskip

\textbf{b)}
    If the symbol $\psi:\rn\to\real$ is real-valued, the L\'evy--Khintchine formula \eqref{vor-e06} becomes
    \begin{gather*}
        \psi(\xi) = \frac 12Q\xi\cdot\xi + \int_{\rn\setminus\{0\}} \left(1-\cos(\xi\cdot x)\right) \nu(dx)
    \intertext{and on the imaginary axis we have (at least formally)}
        \psi(-\iup\eta) = -\frac 12Q\eta\cdot\eta + \int_{\rn\setminus\{0\}} \left(1-\cosh(\eta\cdot x)\right) \nu(dx).
    \end{gather*}
    Since the integrand $1-\cosh u$ is smaller than or equal to $0$ and has exactly one zero at $u=0$, it is clear that $\{\eta\in\rn \mid \psi(-\iup\eta)=0\} = \{0\}$ always holds for such symbols.
\end{example}

We close this section  with a probabilistic argument which ensures that $\{\xi\in\rn \mid  \Ee \eup^{\xi\cdot X_t} = 1\}=\{0\}$. Recall that a L\'evy process is said to be \textbf{genuinely $n$-dimensional} if $X_t$ does not just take values in a hyperplane, i.e.\ if $\supp(\mu_t)$ generates the whole group $(\rn,+)$, cf.\ \cite[p.\ 156--7]{sato}.
\begin{corollary}\label{gliou-37}
    Let  $(X_t)_{t\geq 0}$ be a genuinely $n$-dimensional L\'evy process with characteristic exponent $\psi$ given by \eqref{vor-e06} such that $\{\xi\in\rn \mid \psi(\xi) = 0\}=\{0\}$. If the L\'evy process $(\beta\cdot X_t)_{t\geq 0}$ is recurrent for every $\beta\in\rn$, then the set $\{\xi\in\rn \mid  \Ee \eup^{\xi\cdot X_t} = 1\}$ is equal to $\{0\}$; equivalently, $\psi(-\iup\eta)$ does not have any other zero than $\eta=0$.
\end{corollary}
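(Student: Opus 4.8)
The plan is a short argument by contradiction, built around the exponential martingale $\eup^{\eta\cdot X_t}$.

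First I would reformulate the claim. By Lemma~\ref{gliou-31}, parts~\ref{gliou-31-a} and~\ref{gliou-31-c}, the set $Z:=\{\xi\in\rn \mid \Ee\eup^{\xi\cdot X_t}=1\}$ does not depend on $t>0$ and equals $\{\eta\in\rn \mid \psi(-\iup\eta)=0\}$; it obviously contains $0$, so it suffices to rule out the existence of some $\eta\in Z$ with $\eta\neq0$. I would fix such an $\eta$ and set $Y_t:=\eta\cdot X_t$, a one-dimensional L\'evy process. Since $X$ is genuinely $n$-dimensional, $Y$ cannot be the zero process: if $Y_t=0$ almost surely for every $t$, then $\supp(\mu_t)\subset\eta^\perp$ would be contained in a hyperplane, contradicting genuine $n$-dimensionality.

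Next I would exploit that $\eta\in Z$ forces $\Ee\eup^{\eta\cdot X_r}=1$ for every $r>0$, so that $M_t:=\eup^{Y_t}$ (with $M_0=1$) is a non-negative martingale for the natural filtration $(\mathcal{F}_t)_{t\ge0}$: by the independence and stationarity of the increments, $\Ee[M_t\mid\mathcal{F}_s]=M_s\,\Ee\eup^{\eta\cdot X_{t-s}}=M_s$ for $0\le s\le t$. A non-negative martingale converges almost surely, hence $M_t\to M_\infty\in[0,\infty)$ and $Y_t\to\log M_\infty\in[-\infty,\infty)$ almost surely. Now I invoke the recurrence hypothesis with $\beta=\eta$: the process $(Y_t)_{t\ge0}$ is recurrent, i.e.\ $\liminf_{t\to\infty}|Y_t|=0$ almost surely (see \cite[Chapter~7]{sato}); since $Y_t$ already converges, this forces $\lim_{t\to\infty}Y_t=0$, that is $M_\infty=1$ almost surely. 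Then $\Ee M_\infty=1=\Ee M_0$, so the non-negative martingale $(M_t)$ is uniformly integrable and closed by $M_\infty$, whence $M_t=\Ee[M_\infty\mid\mathcal{F}_t]=\Ee[1\mid\mathcal{F}_t]=1$ almost surely for all $t>0$; thus $Y_t=\eta\cdot X_t=0$ almost surely, contradicting the non-degeneracy established above. Therefore $Z=\{0\}$, i.e.\ $\psi(-\iup\eta)$ vanishes only at $\eta=0$.

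I do not expect a serious obstacle here; the only point needing a little care is turning ``$M_t$ converges a.s.'' together with recurrence into ``$M_t\equiv1$''. If one prefers to avoid invoking uniform integrability, the supermartingale property combined with conditional Fatou already gives $M_t\ge\Ee[M_\infty\mid\mathcal{F}_t]=1$ almost surely, hence $Y_t\ge0$ almost surely, and then $1=\Ee\eup^{Y_t}\ge1+\Ee Y_t$ forces $\Ee Y_t=0$ and so $Y_t=0$ almost surely. Finally I would note that the standing condition $\{\psi=0\}=\{0\}$ is not actually needed for the implication itself; it is carried along so that, together with $\{\eta\mid\psi(-\iup\eta)=0\}=\{0\}$, one obtains precisely condition~\ref{gliou-35-b} of Theorem~\ref{gliou-35}, and hence the strong Liouville property~\ref{gliou-35-a} for $\Lcal_\psi$.
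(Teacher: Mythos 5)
Your proof is correct and rests on essentially the same mechanism as the paper's: the exponential martingale $\eup^{\beta\cdot X_t}$ (well defined and mean one for all $t$ by Lemma~\ref{gliou-31}) together with Doob's almost sure convergence theorem. The only divergence is in the final step: the paper argues that a non-constant positive martingale of this form can converge only if $\beta\cdot X_t\to-\infty$, i.e.\ the projected process is transient, contradicting the recurrence hypothesis, whereas you inject recurrence through $\liminf_{t\to\infty}|\beta\cdot X_t|=0$ to pin the limit at $1$ and then deduce $\beta\cdot X_t\equiv 0$, contradicting genuine $n$-dimensionality---both closings (including your Fatou/Jensen variant avoiding uniform integrability, and your correct observation that $\{\psi=0\}=\{0\}$ is not needed for this implication) are valid.
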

\begin{proof}
    Let $\beta \in\{\xi\in\rn \mid  \Ee \eup^{\xi\cdot X_t}=1\}\setminus\{0\}$. A short calculation shows that $(\exp(\beta \cdot X_t))_{t\geq 0}$ is a positive, right-continuous martingale satisfying $\Ee \exp( \beta\cdot X_t)=1$ for all $t\geq 0$. Since $(X_t)_{t\geq 0}$ is genuinely $n$-dimensional, the martingale is not a.s.\ constant.  Doob's martingale convergence theorem proves that $\exp(\beta\cdot X_t)$ converges a.s.\ to some finite random variable $Y_\infty$ as $t\to\infty$. On the other hand, since $(\beta\cdot X_t)_{t\geq 0}$ is a L\'evy process, $\exp(\beta \cdot X_t)$ can only converge to a finite limit if $\beta\cdot X_t\to -\infty$; this means that $(\beta\cdot X_t)_{t\geq 0}$ is transient.
\end{proof}

\section{Further notes and complements}\label{sec-notes}

Using \textbf{Bochner's subordination}, cf.\ \cite[Chapter 13]{SSV}, we can give a further characterization of the Liouville property. If $(\Pcal_t)_{t\geq 0}$ is any strongly continuous contraction semigroup and $(\gamma_t)_{t\geq 0}$ a vaguely continuous convolution semigroup of probability measures on the half-line $[0,\infty)$, then $\Tcal^h_t u := \int_0^\infty \Pcal_su\,\gamma_t(ds)$ (understood as a Bochner integral) is again a strongly continuous contraction semigroup. The superscript $h$ in $\Tcal^h_t$ denotes a \textbf{Bernstein function}. Bernstein functions uniquely characterize the convolution semigroup $(\gamma_t)_{t\geq 0}$ via the (one-sided) Laplace transform $\widetilde\gamma_t(\lambda) := \int_{[0,\infty)} \eup^{-\lambda s}\,\gamma_t(ds) = \eup^{-t h(\lambda)}$, and all Bernstein functions are of the form
\begin{gather}\label{notes-e40}
    h(\lambda) = a\lambda + \int_{(0,\infty)} \left(1-\eup^{-\lambda s}\right)\pi(ds)
\end{gather}
where $a\geq 0$ and $\pi$ is a measure on $(0,\infty)$ satisfying $\int_0^\infty \min\{s,1\}\,\pi(ds)<\infty$, see \cite[Chapter 3]{SSV}. Of the numerous properties of Bernstein functions let us only note that every $h\not\equiv 0$ is strictly monotone. Typical examples of Bernstein functions are fractional powers $h(\lambda)=\lambda^\alpha$, $0<\alpha<1$, ($a=0$, $\pi(ds) = \frac{\alpha}{\Gamma(1-\alpha)} s^{-1-\alpha}\,ds$), logarithms $h(\lambda)=\log(1+\lambda)$ ($a=0$, $\pi(ds) = s^{-1}\eup^{-s}\,ds$), `resolvents' $h(\lambda) = \frac{\lambda}{\tau+\lambda}$, $\tau>0$, ($a=0$, $\pi(ds) = \tau \eup^{-s\tau}\,ds$) and `semigroups' $h(\lambda) = 1-\eup^{-\lambda t}$, $t>0$, ($a=0$, $\pi(ds) = \delta_t(ds)$).

If we extend $h$ to the complex right-half plane $\HH = \{\lambda+\iup\eta \mid \lambda\geq 0, \eta\in\real\}$, we see from \eqref{notes-e40} that
\begin{gather}\label{notes-e42}
    \Re h(\zeta) = a\lambda + \int_{(0,\infty)} \left(1-\eup^{-\lambda s}\cos(\eta s)\right)\pi(ds),\quad \zeta=\lambda+\iup\eta \in\HH.
\end{gather}
Note that $\eup^{-\lambda s} < 1$ if $\lambda, s > 0$. Thus, if $a>0$, $h(\zeta)=0$ if, and only if, $\zeta=0$. If $a=0$ and $\lambda>0$, the inequality
\begin{gather*}
    1-\eup^{-\lambda s}\cos(\eta s) > 0,\quad \lambda, s>0
\end{gather*}
shows that $h(\zeta)=0$ can only happen if $\lambda=0$. In this case we also need that
\begin{gather*}
    \int_{(0,\infty)} \left(1-\cos(\eta s)\right)\pi(ds) = 0
\end{gather*}
which is only possible for $\eta\neq 0$ if $\supp\pi$ is discrete\footnote{If we compare \eqref{vor-e06} and \eqref{notes-e40} for $\lambda = \iup\eta$, we see that $\eta\mapsto h(\iup\eta)$ is a continuous and negative definite function, and the exact structure of $\supp\pi$ is given in Corollary~\ref{notes-45} below.}.

If $\psi$ is the symbol of a L\'evy process, then $h\circ\psi$ is also the symbol of a L\'evy process and $-h(-\Lcal_\psi) = \Lcal_{h\circ\psi}$ on $\Scal(\rn)$; here, $-h(-\Lcal_\psi)$ is understood as a function of the operator $-\Lcal_\psi$ in virtually any reasonable functional calculus sense, see \cite[Chapter 13]{SSV}. The following corollary can also be seen as a generalization of Lemma~\ref{prob-17}.

\begin{corollary}\label{notes-43}
    Let $\Lcal_\psi$ be the generator of a L\'evy process and $h$ a Bernstein function given by \eqref{notes-e40}. If $a\neq 0$ or if $\supp\pi$ is not discrete, then $\Lcal_\psi$ has the Liouville property \eqref{vor-e14} if, and only if, $\Lcal_{g\circ\psi}$ has the Liouville property.

    In particular, $\Lcal_\psi$ has the Liouville property if, and only if, for some \textup{(}or all\textup{)} $\tau>0$ the resolvent $\Rcal_\tau := (\tau\id-\Lcal_\psi)^{-1}$ enjoys the following property:
    \begin{gather}\label{notes-e44}
        f\in L^\infty(\rn)\;\;\text{and}\;\; \tau \Rcal_\tau f = f \implies f\equiv\textup{const}
    \end{gather}
    Moreover, $\Lcal_\psi$ has the Liouville property, if and only if, the semigroup $\Pcal_t$ enjoys the following property:
    \begin{gather}\label{notes-e46}
        f\in L^\infty(\rn)\;\;\text{and}\;\; \Pcal_t f = f \implies f\equiv\textup{const}
    \end{gather}
    for at least two values $t=t_1>0$ and $t=t_2>0$ such that $t_1/t_2\notin\rat$. If, in addition, $\Lcal_\psi$ is self-adjoint, i.e.\ if $\psi = \overline\psi$ is real-valued, then it is enough to have \eqref{notes-e46} for one $t>0$.
\end{corollary}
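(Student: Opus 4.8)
The plan is to use the subordination correspondence $\psi \mapsto h\circ\psi$ together with the already-established characterization of the Liouville property in terms of the zero-set. By Theorem~\ref{prob-21}, $\Lcal_\psi$ has the Liouville property iff $\{\psi=0\}=\{0\}$, and $\Lcal_{h\circ\psi}$ has it iff $\{h\circ\psi=0\}=\{0\}$. So the first step is to show that, under the hypothesis $a\neq 0$ or $\supp\pi$ non-discrete, one has $\{h\circ\psi=0\}=\{\psi=0\}$. The inclusion $\{\psi=0\}\subset\{h\circ\psi=0\}$ is immediate since $h(0)=0$. For the converse, suppose $\xi$ satisfies $h(\psi(\xi))=0$; writing $\psi(\xi)=\lambda+\iup\eta$ with $\lambda=\Re\psi(\xi)\geq 0$, formula \eqref{notes-e42} and the discussion immediately preceding this corollary show that $\Re h(\psi(\xi))=0$ forces $\lambda=0$ (using $a>0$ or, when $a=0$, the strict positivity of $1-\eup^{-\lambda s}\cos(\eta s)$ for $\lambda,s>0$), and then $\int(1-\cos(\eta s))\pi(ds)=0$; since $\supp\pi$ is assumed non-discrete (or empty, the $a\neq 0$ case already being settled), this integral can vanish only if $\eta=0$. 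Hence $\psi(\xi)=0$, giving $\{h\circ\psi=0\}\subset\{\psi=0\}$, and combining with Theorem~\ref{prob-21} applied to both operators proves the first assertion.

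Next I would specialize. For the resolvent statement \eqref{notes-e44}, take the Bernstein function $h(\lambda)=\frac{\lambda}{\tau+\lambda}$, $\tau>0$, which has $a=0$ and $\pi(ds)=\tau\eup^{-s\tau}\,ds$, so $\supp\pi=[0,\infty)$ is not discrete; then $\Lcal_{h\circ\psi} = -h(-\Lcal_\psi) = -\tfrac{-\Lcal_\psi}{\tau-\Lcal_\psi}\cdot(-1)$, and a short computation identifies the kernel equation $\Lcal_{h\circ\psi}f=0$ (weakly) with $\tau\Rcal_\tau f = f$: indeed $\frac{\Lcal_\psi}{\tau-\Lcal_\psi}f = 0$ is equivalent to $\Lcal_\psi f=0$, which by Lemma~\ref{prob-17} is equivalent to $\tau\Rcal_\tau f=f$. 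So the first part applies and gives \eqref{notes-e44}. For the semigroup statement \eqref{notes-e46}, the relevant Bernstein function is $h_t(\lambda)=1-\eup^{-\lambda t}$, with $a=0$ and $\pi=\delta_t$, which is discrete — so a single $t$ is not enough in general, and the zero-set of $h_t\circ\psi$ is $\{\xi\mid \psi(\xi)\in\tfrac{2\pi\iup}{t}\integer\}\cap\rn$, a possibly strictly larger group than $\{\psi=0\}$. The point is that intersecting over two values $t_1,t_2$ with $t_1/t_2\notin\rat$ kills the spurious zeros: if $\psi(\xi)=\iup\eta$ with $\eta\in\real$ (the only way $h_{t_j}(\psi(\xi))=0$ can occur, by the $\Re$-computation again, since $\Re\psi\geq 0$ and $\eup^{-\lambda t}<1$ forces $\Re\psi(\xi)=0$) and $\eup^{-\iup\eta t_1}=\eup^{-\iup\eta t_2}=1$, then $\eta t_1, \eta t_2\in 2\pi\integer$, and irrationality of $t_1/t_2$ forces $\eta=0$; hence $\{h_{t_1}\circ\psi=0\}\cap\{h_{t_2}\circ\psi=0\}=\{\psi=0\}$. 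Identifying $\Pcal_t f=f$ with $\Lcal_{h_t\circ\psi}f=0$ weakly via Lemma~\ref{prob-17} (its conditions \ref{prob-17-a} and \ref{prob-17-c}) then yields \eqref{notes-e46}. Finally, when $\psi=\overline\psi$ is real-valued, $\psi(\xi)=\iup\eta$ with $\eta\in\real$ forces $\eta=0$ outright, so $\{h_t\circ\psi=0\}=\{\psi=0\}$ already for a single $t$, giving the self-adjoint refinement.

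The main obstacle I anticipate is the bookkeeping around the complex extension and the precise meaning of $h(-\Lcal_\psi)$: one must be careful that the functional-calculus identity $-h(-\Lcal_\psi)=\Lcal_{h\circ\psi}$, valid on $\Scal(\rn)$, transfers correctly to the weak (distributional) formulation of the kernel equation used in \eqref{vor-e14}, and that the equivalences of Lemma~\ref{prob-17} — stated for $f\in C_b(\rn)$ — combine with the mollification argument of Remark~\ref{gen-03} to handle general $f\in L^\infty(\rn)$. Concretely, the delicate point is verifying that ``$\Lcal_{h\circ\psi}f=0$ weakly'' is genuinely equivalent to ``$\tau\Rcal_\tau f=f$'' (resp. ``$\Pcal_t f=f$'') for bounded $f$; this is essentially a repackaging of Lemma~\ref{prob-17}, but one should check that subordination does not enlarge the relevant function class or spoil the contraction-semigroup structure, which it does not, since $h\circ\psi$ is again a L\'evy symbol. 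The rest is the elementary group-theoretic argument about zero-sets sketched above.
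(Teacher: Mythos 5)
Your proposal is correct and follows essentially the same route as the paper: reduce everything to Theorem~\ref{prob-21} by showing that $\{\zeta\in\HH \mid h(\zeta)=0\}=\{0\}$ (via \eqref{notes-e42}) forces $\{h\circ\psi=0\}=\{\psi=0\}$, then specialize to $h(\lambda)=\lambda/(\tau+\lambda)$ for the resolvent statement and to $g_t(\lambda)=1-\eup^{-t\lambda}$ with the incommensurability of $t_1,t_2$ (and strict monotonicity/real-valuedness in the self-adjoint case) for the semigroup statement. Your identifications of $\Lcal_{h\circ\psi}f=0$ with $\tau\Rcal_\tau f=f$, resp.\ $\Pcal_t f=f$, are exactly the paper's Fourier-side identities, so no substantive difference remains.
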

\begin{proof}
    The first assertion follows from the observation that $\{\zeta\in\HH \mid h(\zeta)=0\}=\{0\}$ implies $\{h\circ \psi = 0\}=\{\psi = 0\}$ and Theorem~\ref{prob-21}.

    For the second part use the Bernstein function $h(\lambda)=\frac{\lambda}{\tau+\lambda}$ and note that $\lambda=0$ is the only zero of $h$ in $\HH$; moreover $(\phi-\tau\Rcal_\tau\phi)\ \widehat{}= \psi (\tau+\psi)^{-1}\widehat\phi$.

    Note that $\widehat{\Pcal_t\phi} = \eup^{-t\psi}\widehat\phi$. Let $g_t = 1-\eup^{-t\lambda}$ and observe that $g_{t_1}(\zeta) = g_{t_2}(\zeta)=0$ for $\zeta\in\HH$ is only possible if $\zeta=0$---this is due to the assumption that $t_1/t_2\notin\rat$.

    Finally, if $\psi$ is real-valued, i.e.\ if $\Lcal_\psi$ is self-adjoint, there is no need to extend $h$ to $\HH$. In this case, $\{\lambda \geq 0 \mid h(\lambda)=0\}=\{0\}$ is always trivial  (because of the strict monotonicity of $h$)  and we can use the previous argument for $h(\lambda) = 1-\eup^{-\lambda t}$ and the semigroup $\Pcal_t$.
\end{proof}

The paper \cite{ali-et-al} by Alibaud \emph{et al.}\ contains another characterization of the Liouville property \eqref{vor-e14} for L\'evy generators using completely different methods: It is based on the characteristic triplet $(b,Q,\nu)$ appearing in the L\'evy--Khintchine representation \eqref{vor-e06} of $\psi$. If we combine our Theorem~\ref{prob-21} with the result of \cite{ali-et-al}, we arrive at an interesting description of the zero-set of a negative definite function.

We need the following notation from \cite{ali-et-al}.
Let $\Sigma$ be the positive semidefinite square root of $Q$ and denote by $\sigma_1,\dots,\sigma_n$ the column vectors of $\Sigma$.
Let $G_\nu = G(\supp\nu)$ be the smallest additive subgroup of $\rn$ containing $\supp\nu$, $V_\nu = \{x\in\overline{G_\nu} \mid tx\in\overline{G_\nu}\;\:\forall t\in\real\}$  ($\overline G$ stands for the closure of $G\subset\rn$),  $c_\nu = -\int_{\{|y|<1\}\setminus V_\nu} y\,\nu(dy)$ and $W_{\Sigma,b+c_\nu} = \mathrm{span}\{\sigma_1,\dots,\sigma_n, b+c_\nu\}$.
\begin{corollary}\label{notes-45}
    Let $\psi$ be the characteristic exponent of a L\'evy process given by \eqref{vor-e06}. Denote by $\{\psi = 0\}^{\boxperp}$ the orthogonal subgroup $\{x\in\rn \mid \forall \xi\in\{\psi=0\} : \eup^{\iup \xi\cdot x}=1\}$ of the additive group $\{\psi=0\}\subset\rn$. Then the following equality holds
    \begin{gather}
        \{\psi = 0\}^{\boxperp}
        = \overline{G_\nu + W_{\Sigma,b+c_\nu}}.
    \end{gather}
\end{corollary}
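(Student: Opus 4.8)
The plan is to first establish the sharper identity
\[
    \{\psi=0\}=\bigl(G_\nu+W_{\Sigma,b+c_\nu}\bigr)^{\boxperp}
\]
and then to apply the annihilator operation once more. Indeed, for any additive subgroup $K\subseteq\rn$ one has $(K^{\boxperp})^{\boxperp}=\overline K$ (the annihilator depends only on $\overline K$, and it is an involution on closed subgroups of $\rn$ by Pontryagin duality; cf.\ \cite[Definition~2.8]{ber-for}), so the displayed identity with $K=G_\nu+W_{\Sigma,b+c_\nu}$ gives $\{\psi=0\}^{\boxperp}=(K^{\boxperp})^{\boxperp}=\overline{G_\nu+W_{\Sigma,b+c_\nu}}$, which is the assertion. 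Note that when the zero-set is trivial the identity reads $\rn=\overline{G_\nu+W_{\Sigma,b+c_\nu}}$, and via Theorem~\ref{prob-21} this is exactly the triplet criterion of \cite{ali-et-al} for the Liouville property \eqref{vor-e14}; the computation below makes this correspondence explicit for an arbitrary symbol.

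To prove the displayed identity I split $\psi(\xi)=0$ into its real and imaginary parts. From the L\'evy--Khintchine representation \eqref{vor-e06} one reads off
\[
    \Re\psi(\xi)=\tfrac12 Q\xi\cdot\xi+\int_{\rn\setminus\{0\}}\bigl(1-\cos(\xi\cdot x)\bigr)\,\nu(dx),
\]
a sum of two non-negative quantities; hence $\Re\psi(\xi)=0$ if, and only if, $Q\xi\cdot\xi=0$ and $\cos(\xi\cdot x)=1$ for $\nu$-almost every $x$. The first condition is equivalent to $\Sigma\xi=0$, i.e.\ $\xi\perp\spann\{\sigma_1,\dots,\sigma_n\}$. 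The second, by continuity of $x\mapsto 1-\cos(\xi\cdot x)$, is equivalent to $\xi\cdot x\in2\pi\integer$ for every $x$ in the closed set $\supp\nu$; since $\{x\mid\xi\cdot x\in2\pi\integer\}$ is a closed subgroup of $\rn$, this holds precisely when it contains $G_\nu$, i.e.\ when $\xi\in G_\nu^{\boxperp}$. In particular, $\Re\psi(\xi)=0$ forces $\eup^{\iup\xi\cdot x}=1$ on $\overline{G_\nu}\supseteq V_\nu$, and since $V_\nu$ is a linear subspace this yields $\xi\perp V_\nu$.

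Next suppose $\Re\psi(\xi)=0$ and $\xi\neq0$. Then $\sin(\xi\cdot x)=0$ for $\nu$-a.e.\ $x$, and moreover $\xi\cdot x=0$ for $\nu$-a.e.\ $x$ with $|x|<2\pi/|\xi|$; consequently the $\sin$-term in $\Im\psi$ drops out, the compensator integral becomes absolutely convergent, and
\[
    \Im\psi(\xi)=-b\cdot\xi+\int_{\{|x|<1\}}\xi\cdot x\,\nu(dx)=-b\cdot\xi+\int_{\{|x|<1\}\setminus V_\nu}\xi\cdot x\,\nu(dx)=-(b+c_\nu)\cdot\xi,
\]
where the middle equality uses $\xi\perp V_\nu$ (so the integrand vanishes $\nu$-a.e.\ on $V_\nu$) and the last one is the definition of $c_\nu$. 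Hence $\Im\psi(\xi)=0\iff(b+c_\nu)\cdot\xi=0$, and combining this with the previous paragraph (the case $\xi=0$ being trivial) gives
\[
    \{\psi=0\}=W_{\Sigma,b+c_\nu}^{\perp}\cap G_\nu^{\boxperp}=\bigl(W_{\Sigma,b+c_\nu}+G_\nu\bigr)^{\boxperp},
\]
using $W^{\perp}=W^{\boxperp}$ for a linear subspace $W$ and $(A+B)^{\boxperp}=A^{\boxperp}\cap B^{\boxperp}$ for subgroups $A,B$. Annihilating once more, as in the first paragraph, completes the proof.

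I expect the main obstacle to be the analysis of the imaginary part: one has to check that, once $\Re\psi(\xi)=0$, the a priori only principal-value integral $\int_{\{|x|<1\}}\xi\cdot x\,\nu(dx)$ is genuinely absolutely convergent (only the annulus $\{2\pi/|\xi|\le|x|<1\}$, on which $\nu$ has finite mass, contributes) and that it equals precisely the truncated first moment $-c_\nu\cdot\xi$ entering $W_{\Sigma,b+c_\nu}$. This rests on the structural fact that $\supp\nu$ meets a sufficiently small ball about the origin only inside $V_\nu$---a consequence of the decomposition $\overline{G_\nu}=V_\nu\oplus\Gamma$ with $\Gamma\subset V_\nu^{\perp}$ a relative lattice (the structure theorem for closed subgroups of $\rn$, cf.\ \cite[Chapter~VII, \S 1.2, p.~72]{bourbaki} and Lemma~\ref{gen-05})---which is also what makes $c_\nu$ well defined in the first place. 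Everything else is routine bookkeeping with annihilators of closed subgroups of $\rn$.
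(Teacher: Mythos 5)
Your argument is correct, and it reaches the corollary by a genuinely different route than the paper does. The paper obtains this identity as a combination of two Liouville-type results: Theorem~\ref{prob-21} (Liouville $\iff$ $\{\psi=0\}=\{0\}$, and, as noted in the proof of Theorem~\ref{ana-10}, the solution set is periodic with periodicity group $\{\psi=0\}^{\boxperp}$, which by \cite[Proposition 8.27]{ber-for} is the closed subgroup generated by $\bigcup_{t>0}\supp\mu_t$) together with the triplet criterion of \cite{ali-et-al}, which is imported as an external result. You instead compute $\{\psi=0\}$ directly from the L\'evy--Khintchine representation, proving the sharper identity $\{\psi=0\}=\bigl(G_\nu+W_{\Sigma,b+c_\nu}\bigr)^{\boxperp}$, and then apply the double-annihilator identity $(K^{\boxperp})^{\boxperp}=\overline{K}$. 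The two delicate points are exactly the ones you flag, and your treatment of them is sound: $\Re\psi(\xi)=0$ forces $\Sigma\xi=0$ and $\supp\nu\subset\{x\mid \xi\cdot x\in 2\pi\integer\}$ (hence $\xi\in G_\nu^{\boxperp}$ and $\xi\perp V_\nu$); and once this holds, the points of $\supp\nu$ off $V_\nu$ lie at distance at least the minimal lattice gap from the origin (by $\overline{G_\nu}=V_\nu\oplus\Gamma$, $\Gamma\subset V_\nu^{\perp}$ discrete), so the compensator integral localizes to an annulus of finite $\nu$-mass, $c_\nu$ is finite, and $\Im\psi(\xi)=-(b+c_\nu)\cdot\xi$, giving $\{\psi=0\}=W_{\Sigma,b+c_\nu}^{\perp}\cap G_\nu^{\boxperp}$. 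What each approach buys: the paper's derivation is essentially immediate once one accepts \cite{ali-et-al}, and it ties the corollary to the probabilistic picture (supports of the $\mu_t$); your computation is self-contained, does not use \cite{ali-et-al} at all, identifies $\{\psi=0\}$ itself (not only its orthogonal subgroup), and, combined with Theorem~\ref{prob-21}, actually yields an independent proof of the Alibaud--del Teso--Endal--Jakobsen criterion rather than relying on it.
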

There is a further characterization of $\{\psi=0\}^{\boxperp}$ which can be found in Berg and Forst \cite[Proposition 8.27]{ber-for}. Denote by $\mu_t$ the probability measure such that $\widecheck \mu_t = \eup^{-t\psi}$, i.e.\ $(\mu_t)_{t\geq 0}$ is the family of transition probabilities of the L\'evy process with exponent $\psi$. Then $\{\psi=0\}^{\boxperp}$ is the smallest closed additive subgroup of $\rn$ which contains $\bigcup_{t>0} \supp(\mu_t)$.  This implies immediately the following result.
\begin{corollary}\label{notes-47}
    Assume that the L\'evy process with characteristic function $\psi$ has transition probabilities $(\mu_t)_{t\geq 0}$ such that for at least one $t_0>0$ the measure $\mu_{t_0}(dx) = p_{t_0}(x)\,dx$ has a strictly positive density, i.e.\ $p_{t_0}(x) > 0$ for all $x\in\rn$. Then the generator $\Lcal_\psi$ has the Liouville property \eqref{vor-e14}.
\end{corollary}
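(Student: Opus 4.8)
The statement follows quickly from the machinery of Section~\ref{sec-prob}, and my plan would be to reduce it to Theorem~\ref{prob-21}. First I would observe that a strictly positive Lebesgue density is equivalent to full topological support: if $p_{t_0}(x)>0$ for every $x\in\rn$, then every non-empty open set has positive $\mu_{t_0}$-measure, hence $\supp(\mu_{t_0})=\rn$. Next I would invoke the characterization recalled just before the corollary (Berg and Forst \cite[Proposition 8.27]{ber-for}): the orthogonal subgroup $\{\psi=0\}^{\boxperp}$ is the smallest closed additive subgroup of $\rn$ containing $\bigcup_{t>0}\supp(\mu_t)$. Since this union already contains $\supp(\mu_{t_0})=\rn$, we get $\{\psi=0\}^{\boxperp}=\rn$. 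Finally, the orthogonal subgroup of a subgroup $H\subset\rn$ equals $\rn$ only when $H=\{0\}$: if $0\neq\xi\in H$, choose $x\in\rn$ with $\xi\cdot x\notin 2\pi\integer$ to contradict $\eup^{\iup\xi\cdot x}=1$. Hence $\{\psi=0\}=\{0\}$, and Theorem~\ref{prob-21} yields the Liouville property \eqref{vor-e14}.

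Alternatively, and perhaps more in the spirit of the probabilistic section, I would run the proof of Theorem~\ref{prob-21} directly. Given a bounded weak solution $f$ of $\Lcal_\psi f=0$, the mollification argument of Remark~\ref{gen-03} lets us assume $f\in C_b^\infty(\rn)$, and Lemma~\ref{prob-17} turns $\Lcal_\psi f=0$ into $\Pcal_t f=f$ for all $t>0$. The Choquet--Deny theorem (Theorem~\ref{prob-15}) then gives, exactly as in the proof of Theorem~\ref{prob-21}, that $\bigcup_{t>0}\supp(\mu_t)\subset\Per(f)$; in particular $\supp(\mu_{t_0})\subset\Per(f)$. Since $\supp(\mu_{t_0})=\rn$ by hypothesis, $\Per(f)=\rn$, and therefore $f$ is constant.

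There is essentially no obstacle here; the corollary is a short consequence of results already established. The only point to state carefully is the passage from ``strictly positive density'' to ``full support'', which is immediate. It may be worth adding a remark that the converse fails badly: many L\'evy processes with the Liouville property have transition measures that are far from having a strictly positive density (e.g.\ one-sided stable subordinators, whose laws are supported in a half-line, or compound Poisson processes whose laws are purely atomic), so the hypothesis of Corollary~\ref{notes-47} is a convenient sufficient condition rather than a necessary one---the sharp criterion remaining $\{\psi=0\}=\{0\}$ from Theorem~\ref{prob-21}, or equivalently the description of $\{\psi=0\}^{\boxperp}$ in Corollary~\ref{notes-45}.
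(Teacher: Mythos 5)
Your first argument is exactly the paper's (implicit) proof: strict positivity of $p_{t_0}$ gives $\supp(\mu_{t_0})=\rn$, the Berg--Forst characterization then forces $\{\psi=0\}^{\boxperp}=\rn$, hence $\{\psi=0\}=\{0\}$, and Theorem~\ref{prob-21} applies. Your alternative direct route via Lemma~\ref{prob-17} and the Choquet--Deny theorem is also correct, but it is just a re-run of the proof of Theorem~\ref{prob-21} in this special case, so no further comment is needed.
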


It is well known that powers of the Laplace operator also has the Liouville property. This carries over to generators of a L\'{e}vy process.
\begin{proposition}\label{notes-49}
    Let $\Lcal_\psi$ be the generator of a L\'evy process with characteristic exponent $\psi$ given by \eqref{vor-e06}. Then $\Lcal_\psi$ has the Liouville property \eqref{vor-e14} if, and only if, $\Lcal_\psi^n$ has the Liouville property for \textup{(}some or, equivalently,\textup{)} every $n\in\nat$.
\end{proposition}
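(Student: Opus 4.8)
The plan is to reduce everything to the case $n=1$, i.e.\ to Theorem~\ref{prob-21}, by peeling the iterated operator apart one factor at a time. The point to keep in mind is that $\psi^n$ is in general \emph{not} a continuous negative definite function, so $\Lcal_\psi^n$ is not itself the generator of a L\'evy process and Theorem~\ref{prob-21} cannot be quoted for it directly; instead I will show separately that $\Lcal_\psi^n$ has the Liouville property \eqref{vor-e14} if, and only if, $\{\psi=0\}=\{0\}$, which by Theorem~\ref{prob-21} is exactly the condition characterising the Liouville property of $\Lcal_\psi=\Lcal_\psi^1$. The equivalence ``for some $n$'' $\Leftrightarrow$ ``for every $n$'' then falls out automatically.

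First I would reduce to smooth functions. Write $C_b^\infty(\rn)$ for the smooth functions all of whose derivatives are bounded. From the integro-differential representation \eqref{vor-e12}, together with $\int_{|y|\ge 1}\nu(dy)<\infty$ and a second-order Taylor expansion of the non-local part, one reads off that $\Lcal_\psi$ maps $C_b^\infty(\rn)$ into itself; hence $\Lcal_\psi^k$ acts \emph{pointwise} on $C_b^\infty(\rn)$, and a routine integration by parts (the formal adjoint of $\Lcal_\psi$ being $\Lcal_{\overline\psi}$, cf.\ \eqref{vor-e10} and Lemma~\ref{gen-01}) shows that for $u\in C_b^\infty(\rn)$ this coincides with the distribution $\phi\mapsto\scalp{u}{\Lcal_{\overline\psi}^k\phi}$. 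Since $\Lcal_\psi$, hence $\Lcal_\psi^n$, commutes with convolutions (Remark~\ref{gen-03}), a function $f\in L^\infty(\rn)$ with $\Lcal_\psi^n f=0$ weakly gives rise, via a Friedrichs mollifier $j_\epsilon$, to $f_\epsilon:=j_\epsilon*f\in C_b^\infty(\rn)$ with $\Lcal_\psi^n f_\epsilon\equiv 0$; letting $\epsilon\downarrow 0$ it then suffices to prove that every $f\in C_b^\infty(\rn)$ with $\Lcal_\psi^n f\equiv 0$ is constant.

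For the ``only if'' direction, if $\eta\in\{\psi=0\}\setminus\{0\}$ then $\Lcal_\psi(\eup^{\iup\eta\cdot x})=-\psi(\eta)\eup^{\iup\eta\cdot x}$ (apply \eqref{vor-e12} to $u(x)=\eup^{\iup\eta\cdot x}\in C_b^\infty(\rn)$ and compare with \eqref{vor-e06}), so that $\Lcal_\psi^n(\cos(\eta\cdot x))=\Re\bigl((-\psi(\eta))^n\eup^{\iup\eta\cdot x}\bigr)\equiv 0$ exhibits a bounded, non-constant solution and $\Lcal_\psi^n$ fails \eqref{vor-e14}. For the ``if'' direction, assume $\{\psi=0\}=\{0\}$ and prove, by induction on $k\ge 1$, that every $v\in C_b^\infty(\rn)$ with $\Lcal_\psi^k v\equiv 0$ is constant. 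The base case $k=1$ is Theorem~\ref{prob-21}. For $k\ge 2$, the function $w:=\Lcal_\psi v\in C_b^\infty(\rn)$ satisfies $\Lcal_\psi^{k-1}w\equiv 0$, hence $w\equiv c$ for a constant $c$ by the inductive hypothesis; then, for each $z\in\rn$, the function $g_z:=v-v(\cdot+z)\in C_b^\infty(\rn)$ satisfies $\Lcal_\psi g_z=c-c=0$ (as $\Lcal_\psi$ commutes with translations), hence is constant by Theorem~\ref{prob-21}, which means $v(x)-v(x+z)=v(0)-v(z)$ for all $x,z\in\rn$. Therefore $h:=v-v(0)$ solves Cauchy's functional equation $h(x+z)=h(x)+h(z)$; being continuous it is linear, and being bounded it vanishes identically, so $v$ is constant.

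I expect no deep obstacle here; the only genuinely delicate point is the descent step in the induction, where after the inductive hypothesis yields $\Lcal_\psi v\equiv c$ one has to exclude $c\neq 0$. The translation trick together with Cauchy's functional equation handles this cleanly; alternatively, Dynkin's formula gives $\Pcal_t v=v+ct$, which is incompatible with $\Pcal_t$ being an $L^\infty$-contraction unless $c=0$. The preliminary bookkeeping (that $\Lcal_\psi$ preserves $C_b^\infty(\rn)$ and that the weak and pointwise meanings of $\Lcal_\psi^k$ agree there) is routine but must be carried out, since it is precisely what allows one to work with genuine functions throughout and to invoke Theorem~\ref{prob-21} at each stage of the induction.
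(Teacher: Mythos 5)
Your proof is correct, and its skeleton---mollify to reduce to smooth bounded solutions, then peel off one factor of $\Lcal_\psi$ at a time by induction, with the converse direction being immediate---is the same as the paper's. The difference lies in the one genuinely delicate step, namely excluding $\Lcal_\psi v\equiv c$ with $c\neq 0$ after the inductive hypothesis has been applied. The paper handles this with Dynkin's formula: it writes $\Pcal_t\Lcal_\psi^{n-2}f-\Lcal_\psi^{n-2}f=ct$ and notes that the left-hand side is bounded uniformly in $t$ because $\Pcal_t$ is an $L^\infty$-contraction, forcing $c=0$; this is exactly the alternative you sketch at the end of your proposal. Your main line instead avoids the semigroup entirely: from $\Lcal_\psi v\equiv c$ and translation invariance you get $\Lcal_\psi\bigl(v-v(\cdot+z)\bigr)=0$ for every shift $z$, invoke the $n=1$ Liouville theorem (Theorem~\ref{prob-21}) for each such difference, and conclude via Cauchy's functional equation that $v-v(0)$ is additive, hence linear, hence zero by boundedness. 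This is a clean, more elementary replacement for the Dynkin estimate, at the cost of applying Theorem~\ref{prob-21} once per translation rather than once per induction step; both routes rest on the same preliminary bookkeeping (that $\Lcal_\psi$ preserves $C_b^\infty(\rn)$ and that pointwise and weak meanings of the iterated operator agree there), which the paper also uses implicitly. A second, cosmetic difference: for the converse the paper simply reuses a non-constant bounded solution of $\Lcal_\psi f=0$, which trivially solves $\Lcal_\psi^n f=0$, whereas you route the argument through the zero-set criterion and the explicit solution $\cos(\eta\cdot x)$; both are valid, and your formulation has the small bonus of recording that the Liouville property of $\Lcal_\psi^n$ is itself equivalent to $\{\psi=0\}=\{0\}$.
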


\begin{proof}
    Assume that $\Lcal_\psi$ has the Liouville property. Let $n\geq 2$ and $f\in L^\infty(\rn)$ such that $\Lcal^n_\psi f=0$. As before, we may assume that $f\in C^\infty_b(\rn)$. We see that $\Lcal_\psi(\Lcal_\psi^{n-1}f)=0$, which implies that $\Lcal_\psi^{n-1}f=c$ for some constant $c\in\real$, hence we calculate that
    \begin{align*}
        \mathcal{P}_t \Lcal_\psi ^{n-2}f-\Lcal_\psi^{n-2}f=ct
    \end{align*}
    for all $t>0$, where we use Dynkin's formula. As the left-hand side is uniformly bounded in $t\in [0,\infty)$, we see that $c=0$. By using an induction argument, we conclude that $f$ is constant.

    Conversely, if $\Lcal_\psi$ does not have the Liouville property, there exits some non-constant function $f\in C^\infty_b(\rn)$ such that $\Lcal_\psi f=0$. Moreover, we see that $\Lcal_\psi f=0$ implies also $\Lcal^n_\psi f=0$.
\end{proof}

\noindent
    \emph{Note added in proof}: If $\psi$ is real-valued, i.e.\ if the corresponding L\'evy process $(X_t)_{t\geq 0}$ is symmetric, then the condition $\{\psi=0\}=\{0\}$ is also equivalent to the fact that the process has a unique invariant measure, cf.\ \cite{ying94}. We are grateful to Prof.\ P.J.\ Fitzsimmons for pointing this out to us.

\end{document}